\documentclass[reqno]{amsart}

\usepackage{amsfonts,amsmath, amssymb, amsthm}
\usepackage[foot]{amsaddr}
\usepackage{graphicx}
\usepackage{subcaption}
\usepackage{color}
\usepackage[hyperindex,breaklinks]{hyperref}
\usepackage{xspace}
\usepackage{framed}
\usepackage[ruled,noline]{algorithm2e}
\usepackage[utf8]{inputenc}
\usepackage{multirow}
\usepackage[a4paper]{geometry}
\usepackage[mathscr]{euscript}
\usepackage{tablefootnote}
\usepackage{setspace}
\usepackage{enumitem}  
\usepackage{booktabs}
\usepackage{adjustbox}
\usepackage{siunitx}

\newtheorem{theorem}{Theorem}[section]

\newtheorem{definition}[theorem]{Definition}

\newtheorem{lemma}[theorem]{Lemma}
\newtheorem{proposition}[theorem]{Proposition}
\newtheorem{corollary}[theorem]{Corollary}
\newtheorem{remark}[theorem]{Remark}

\hypersetup{colorlinks = true, urlcolor = blue, linkcolor = blue,
  citecolor = black}
\newcommand{\R}{\mathbb{R}}

\newcommand{\E}{\mathbb{E}}

\newcommand{\CR}{{\tt CR}}
\newcommand{\acr}{{\tt ACR}}
\newcommand{\apx}{{\tt apx}}
\newcommand{\vdp}{V^{\tt dp}}
\newcommand{\vce}{V^{\tt ce}}

\title[]{\Large
  \hspace{15pt} \MakeLowercase{\uppercase{M}ultiunit  \uppercase{I.I.D.}   
    \uppercase{P}rophet \uppercase{I}nequalities  
    via \uppercase{E}xtreme 
    \uppercase{V}alue  
    \uppercase{A}symptotics}}

\author{{\large J\MakeLowercase{ieming}
    K\MakeLowercase{ong}} \hspace{30pt} {\large
    K\MakeLowercase{arthyek} M\MakeLowercase{urthy}}}

\address{Daniel J. Epstein Department of Industrial \& Systems Engineering, University of Southern California}
\email{jiemingk@usc.edu,karthyek@usc.edu}
                                   
\begin{document}
\maketitle
\ \vspace{-40pt}
\begin{abstract}
\small   
 We study the i.i.d. $k$-selection prophet inequality problem, where a decision-maker sequentially observes $n$ independent nonnegative rewards and may accept at most $k$ of them without knowledge of future realizations. The objective is to maximize the expected total reward relative to that of a prophet who observes all rewards in advance. This problem captures the performance limits achievable in online resource allocation and underlies posted-price mechanisms in online marketplaces. We characterize the optimal welfare achievable relative to the prophet in terms of $k$ and the extreme value index of the reward distribution, in the asymptotic regime where the number of offers $n$ grows large. This optimal performance ratio turns out to be at least $1-\frac{\log k}{8k}[1+\varepsilon]$ for any $\varepsilon > 0$ and sufficiently large $k$, improving upon the respective, tight $1 - \frac{1}{\sqrt{2\pi k}}$ guarantee of static-threshold algorithms \cite{correa2021}. 

We additionally analyze the certainty-equivalent (CE) heuristic, a widely used online allocation algorithm known to yield optimal regret growth in $n$ when evaluated under the fluid scaling $k \propto n.$ Even in the absence of the fluid scaling $k \propto n,$  the CE heuristics's performance improves with $k$ to eventually match the leading order terms of the optimal dynamic program's performance ratio. A finer analysis nevertheless reveals that regret can be divergent and large relative to the optimal dynamic program when $n/k \to \infty$. This highlights the sensitivity in viewing the CE heuristic’s performance under the commonly adopted, though subjective, fluid scaling assumption. \\
\noindent 
\textbf{Keywords:} Prophet inequalities, Multi-Secretary Problem,  Extreme Value Theory, Resolving heuristic, CE heuristic
\end{abstract}

\section{Introduction}
\label{sec:Introduction}
\noindent

Prophet inequalities are fundamental results that capture the extent to which online resource allocation algorithms, operating under limited foresight, can approximate the performance of an all-knowing prophet. They have attracted significant attention in recent years  due to their close connection to posted-price mechanisms; \cite{Hajiaghayi, Haghiaghayi2018,chawla2010,correa2019}. Beyond the classical single-unit setting, multi-unit prophet inequalities and their generalizations arise naturally in a wide range of economic and operational contexts, including online advertising, revenue management, e-commerce fulfillment, and transportation logistics.

In this paper, we study the multi-unit i.i.d.\ prophet inequality problem, defined as follows. A decision-maker observes a sequence of \(n\) independent, nonnegative rewards arriving sequentially from a known distribution \(F\), and may accept at most \(k\) rewards, where \(k,n\) are positive integers satisfying \(k \leq n\). Upon the arrival of each reward, the decision-maker must make an irrevocable accept-or-reject decision before seeing future rewards. 

The performance of adaptive and non-adaptive algorithms in this setting has been studied from several perspectives, including the multi-secretary problem (see \cite{cayley1875,moser1956,arlotto2019}) and the design of posted-price mechanisms in online marketplaces (see \cite{alaei,correa2019,correa2021}). In a posted-price selling mechanism, a decision-maker seeks to sell \(k\) units of a product in an online marketplace. The \(n\) i.i.d.\ reward realizations in the multi-unit prophet inequality problem can be interpreted as the independent product valuations of a sequentially arriving stream of \(n\) potential buyers. The threshold set by an algorithm at any time for accepting an arriving reward then corresponds to the take-it-or-leave-it price posted by the seller.

A fundamental question in this setting is to understand how the expected total reward achievable by such a sequential (online) policy compares to that of a prophet who observes all rewards in advance and selects the top \(k\) among the $n$ realizations. We further aim to examine how conceptually and computationally simple acceptance policies perform relative to the optimal online policy.

\subsection{Known worst-case approximation guarantees in the i.i.d. setting}
The single-unit case \(k=1\), introduced in the i.i.d.\ setting by \cite{Hill-Kertz}, corresponds to a classical optimal stopping problem and is now well understood. By analyzing the optimal dynamic program associated with worst-case reward distributions, \cite{Hill-Kertz} showed that no online stopping rule can guarantee more than approximately \(0.745\) times the expected reward of the prophet. However, the tightness of this bound, in the sense of the existence of an online algorithm achieving this approximation ratio, was not established until the influential work of \cite{correa_2021_tightness_of_single_unit}.

For the multi-unit i.i.d.\ setting with \(k \geq 1\),  online algorithms are known to achieve at least  $1-k^k e^{-k}/k! \approx 1 - 1/\sqrt{2\pi k}$ fraction of the prophet's expected reward, independent of \(n\) and the underlying reward distribution; \cite{Arnosti2023,Beyhaghi2021,Chakraborty2010,dutting2019,Yan2011}. The fundamental nature of this problem has motivated extensive efforts to obtain tighter characterizations of this approximation ratio, captured via 
\[\CR_{k,n} := \inf_{F} \CR_{k,n}(F),\]
where \(\CR_{k,n}(F)\) denotes the ratio between the expected cumulative reward achieved by the optimal online policy and that of a prophet, for given \(k,n\) and reward distribution \(F\). The quantity \(\CR_{k,n}\), commonly referred to as the competitive ratio, captures the  performance guarantee achievable by online algorithms across all  probability distributions supported on the nonnegative real line. %Knowing $\CR_{k,n}$ translates into prophet inequality statements of the form, ``no online procedure can guarantee more than  $\CR_{k,n}$ fraction of the prophet's expected reward for all reward distributions; morever, there exists a probability distribution for which this guarantee is tight and cannot be improved, even for the optimal online algorithm". 
Recent investigations by \cite{jiang2023}, \cite{brustle2024}, and \cite{molina2025} have significantly advanced our understanding of \(\CR_{k,n}\), primarily through numerical characterizations, as well as through implicit asymptotic descriptions of \(\lim_{n \to \infty} \CR_{k,n}\). 
%in terms of coefficients for which a system of \(k\) nonlinear differential equations admits a solution. 
Specifically, \cite{brustle2024}, and \cite{molina2025} derive a characterization that reduces to 
\begin{align}
    \liminf_{n \to \infty} \CR_{k,n} \geq \sum_{i=1}^k \theta_i^\ast
    \label{eq:CR-Brustle}
\end{align}
where $\theta_1^\ast, \ldots, \theta_k^\ast$ are implicitly defined as parameter values for which a certain carefully derived system of $k$ nonlinear differential equations admits a solution.  This system generalizes the Hill–Kertz equation introduced in \cite{Hill-Kertz} for the single-unit case $k=1.$ Using these characterizations, \cite{brustle2024,molina2025} exhibit numerical lower bounds  that improve upon the factor $1-k^k e^{-k}/k!$ for $k \leq 5$  and large $n.$ 

The characterization \eqref{eq:CR-Brustle} does not readily translate into explicitly understandable lower bounds however, due to the limited understanding of the magnitude of the constants  $\theta_1^\ast, \ldots, \theta_k^\ast.$  Indeed, \cite{brustle2024} remarks on   the substantial difficulty involved in deriving explicit bounds for  $\theta_1^\ast, \ldots, \theta_{k-1}^\ast.$ Obtaining an analytical expression for $\CR_{k,n}$ remains an open problem, and likewise, gaining a more explicit understanding of how the asymptotic competitive ratio $\lim_{n \rightarrow \infty}\CR_{k,n}$ depends on $k$ is yet to be fully understood as well.  

%A more detailed discussion of these results is provided in Section~\ref{sec:lit-review}. 

%Despite this progress, obtaining an analytically explicit performance guarantee that improves upon the lower bound \(\CR_{k,n} \geq 1 - k^k e^{-k}/k!\) remains an open problem. Likewise, developing an explicit characterization of how the asymptotic competitive ratio \(\lim_{n \to \infty} \CR_{k,n}\) depends on $k$ is still not fully understood.

\subsection{Results on Instance-Dependent Asymptotic Competitive Ratio}
\subsubsection{A sharp characterization of the asymptotic competitive ratio}
Our first main result provides a precise characterization of the instance-dependent asymptotic competitive ratio
\[
\acr_k(F) := \lim_{n \to \infty} \CR_{k,n}(F),
\]
for the $k$-unit online selection problem with rewards drawn i.i.d.\ from a distribution $F$. We derive this characterization for all distributions satisfying the \emph{extreme value condition}, first introduced in this context by \cite{KennedyKertz1991} and subsequently studied in greater depth by \cite{correa2021,Livanos2025,correa2025}.

Just as how the central limit theorem characterizes the asymptotic behavior of sums of i.i.d.\ random variables, the extreme value condition governs the asymptotic behavior of the maximum of i.i.d.\ realizations. This condition holds for a broad class of distributions and forms the foundation of Extreme Value Theory (EVT); see, for example, \cite{deHaan2006extreme,resnick2008extreme}. Its relevance to prophet inequalities stems from the fact that both the prophet’s payoff and the performance of online algorithms are determined by the maximum of $n$ random variables and neighboring top order statistics.

Under the extreme value condition, formally defined in Section~\ref{sec:preliminaries}, we show that the asymptotic competitive ratio $\acr_k(F)$ depends only on $k$ and a single scalar parameter $\gamma \in \mathbb{R}$, known as the \emph{extreme value index} of the distribution $F$. While such a distribution-specific characterization does not directly yield prophet inequalities that hold uniformly across all distributions, it allows us to first identify the best achievable performance guarantees for all distributions sharing the same extreme value index.

\subsubsection{Worst-case asymptotic competitive ratio for large $k$}
We utilize the above conclusions to further establish that for any $\varepsilon > 0$, online algorithms can asymptotically achieve at least a
\[
1 - \frac{\log k}{8k}\,[1+\varepsilon]
\]
fraction of the prophet’s expected reward for all sufficiently large $k$, provided that the distribution $F$ satisfies the extreme value condition. Despite its asymptotic nature, this bound represents the strongest explicit analytical expression currently available for worst-case performance guarantees over a broad class of distributions. Moreover, this characterization is tight: we exhibit a distribution for which no online algorithm can surpass this asymptotic performance threshold.

\subsubsection{A discussion on the large $n$ assumption and instance-dependent characterizations}
The essential difference in our approach, when compared to the classical pursuit of understanding the worst-case competitive ratio $\CR_{k,n},$ can be described as follows: While the classical  competitive ratio analyses allow the worst-case distribution to depend on $n$ in potentially intricate ways, our approach fixes the distribution $F$ and studies performance  as the user base $n$ grows large. Understanding performance guarantees from this perspective is well motivated for applications such as large-scale online marketplaces featuring many potential buyers; see, for example, \cite{correa2021,correa2025}, the former of which includes an empirical case study demonstrating the practical relevance of this regime. From an analytical standpoint, the large-market assumption yields substantial tractability due to the statistical regularity of the top order statistics from the collection of $n$ i.i.d. realizations. Indeed, this regularity has been leveraged to derive asymptotic competitive ratio in the single-unit problem first in the seminal work of \cite{KennedyKertz1991} and later in \cite{Livanos2025}, and as well to examine the effectiveness of fixed-threshold (fixed-price) strategies in the multiunit problem by  \cite{correa2021} and \cite{correa2025}.  \cite{abdallah2024,abdallah2025} consider dynamic pricing for selling multiple units in a continuous time control setting and characterize the asymptotic behavior of the optimal pricing policy and its revenue in continuous time. This stream of literature, and our results, contribute to the broader line of study on instance-dependent approximation guarantees \cite{Arsenis2021,Cai2015,Feng2024}.

%Characterizing the asymptotic competitive ratio precisely addressing the $k$-unit prophet inequality problem from this large-market perspective remains, and the first part of this paper addresses this gap. 

\subsection{Results on comparison of optimal online performance with the CE heuristic} Can we quantify the performance loss incurred when a decision-maker restricts attention to computationally simpler and more interpretable policies, rather than employing the optimal online policy characterized by the dynamic programming recursion? The second part of this paper treats this question by considering the performance of two simpler, well-known policies: 1) the Certainty Equivalent (CE) heuristic, and 2) fixed-threshold policies, in relation to the optimal online policy. 

\subsubsection{Fixed threshold policies and the CE heuristic}
As the name suggests, a fixed-threshold policy selects a threshold \(T\) in advance and accepts the first \(k\) arrivals whose realized rewards exceed \(T\). Correa et al.~\cite{correa2021,correa2025} explicitly characterize the instance-dependent competitive ratio achievable by fixed-threshold policies. They further show that the best possible performance guarantee for fixed-threshold policies, taken over all distributions satisfying the extreme value condition, approaches \(1 - 1/{\sqrt{2\pi k}}\).

In contrast to the fixed-threshold policies, the Certainty Equivalent (CE) heuristic  dynamically adjusts its acceptance threshold so that the acceptance probability at each time matches the rate required to achieve the target of \(k\) acceptances over \(n\) arrivals (see Section~\ref{sec:ce} for a precise description). This intuitively appealing structure extends naturally to a much broader class of dynamic resource-constrained reward collection problems, including settings with multiple constraints.  We refer the reader to \cite{kleywegt,bumpensanti_wang,jasin_sinha,balseiro_2023} and the references therein for a comprehensive survey of the CE heuristic and its applications in network revenue management, dynamic pricing, e-commerce fulfillment, choice-based assortment optimization, and related problems. The budget-ratio policy, devised earlier in \cite{arlotto2019} for the multi-secretary problem, can be understood as a specialization of the broadly applicable CE heuristic to this specific setting.

The performance of the CE heuristic is typically  studied under the assumption that $k$ grows linearly proportional with $n,$ the so-called ``fluid-scaling" assumption. Examining the sequence of problems indexed by $n$ in this fluid scaling regime $k \propto n, n \rightarrow \infty,$ the CE heuristic has been shown to achieve regret growth rate (relative to the prophet benchmark) which matches that of known lower bounds in a variety of settings; see \cite{balseiro_2023,bray_LB}. 

\begin{center}
\emph{Does the strong performance of the CE heuristic, derived reliant on the fluid scaling assumption \(k \propto n\) as $n \rightarrow \infty,$ continue to hold when this assumption is relaxed and \(k\) and \(n\) are no longer coupled?}
\end{center}

Beyond its theoretical interest, this question is equally important from a practical perspective. For a decision-maker facing a fixed instance, say, for example, selecting \(k = 25\) rewards from a sequence of \(n = 600\) arrivals, it is far from clear that the fluid scaling assumption \(k \propto n\) is appropriate. While the fluid regime plays a central role in understanding the behavior of online algorithms, it is equally critical to examine their performance when the parameters \(k\) and \(n\) are decoupled and the fluid linear scaling assumption no longer applies.

\subsubsection{Results on the performance of CE heuristic}
The EVT based framework we derive for understanding the asymptotic competitive ratio scales well to the above challenge, and allows us to sharply characterize the performance of the CE heuristic for any fixed $k$ and $n$ growing large. To the best of our knowledge, this is the first result to allow continuous reward distributions in this setting and  characterize the performance of CE heuristic without the fluid scaling assumption $k \propto n.$ Our characterization complements the earlier seminal results due to \cite{arlotto2019} which  demonstrates uniformly bounded regret over all $\{(k,n): k \leq n\}$  in the presence of discrete random variables with finite support,

For $k = 1,$ the CE heuristic guarantees approximately only half the average reward collected by the optimal dynamic program. The performance of  CE heuristic improves with $k$ (see Table~\ref{tab:worst_case_ratios}), with the performance ratio growing to eventually match the leading order terms of the optimal dynamic program's performance for all large $k,$ even in the absence of the fluid scaling $k \propto n.$

A finer analysis nevertheless reveals divergent regret relative to the optimal dynamic program when $k=o(n)$ as $n \rightarrow \infty.$ In particular, we exhibit reward distributions for which the CE heuristic's regret can grow arbitrarily faster than the minimal regret growth rate. 
This divergent regret contrasts with the uniformly bounded regret in the case of finitely supported reward distribution \cite{arlotto2019}. In addition, it  underscores the sensitivity in viewing the CE heuristic’s performance under the commonly adopted, though subjective, fluid scaling assumption. 

Concurrent independent work by \cite{abdallah2026optimal} also investigates this question for the CE heuristic and shows that a modification based on the DP recursion and the extreme value index of the reward distribution to  eliminate the leading-order term in the divergent regret.

\subsubsection*{Paper organization} We provide a brief introduction of the extreme value condition in Section 2 and provide the performance guarantees of the optimal online policy and the CE heuristic in Sections 3 and 4. We develop insights from the numerical evaluation of the results in Section 5 and provide proofs of key results in Section 6. We conclude after discussing the implications of our results to the notion of competition complexity, see eg., \cite{correa2025} in Section 7.

\section{Preliminaries from Extreme Value Theory}
\label{sec:preliminaries} 
Let $X$ be a random variable with the distribution $F(x) = P(X \leq x).$ For any $n \geq 1,$ let $M_n = \max\{X_1,\ldots,X_n\}$ denote the maximum of $n$ i.i.d. copies of $X.$ Recall that the Central Limit Theorem characterizes the limiting distributions that  arise for normalized sums of i.i.d. random
variables. Along similar lines, the Extreme Value Theorem identifies the non-trivial
limiting distributions that may result for maxima of random variables  normalized as in
\[ \lim_{n \rightarrow \infty} \frac{M_n - b_n}{a_n},\]
for some suitable scaling sequences $\{a_n,b_n: n \geq 1\}.$ 
\begin{definition}[Extreme Value Condition]
\label{defn:EV-condition}
    We say that a distribution $F$ satisfies the extreme value condition if there exists sequences $\{a_n\}_{n \geq 1},\{b_n\}_{n \geq 1}$ such that the distribution of $(M_n - b_n)/a_n$ converges in distribution. 
\end{definition}

Just as how the limiting distribution in the central limit theorem must be a normal distribution or a stable distribution, any distribution that can arise in the right hand side of the limiting relationship, 
\begin{align}
  \label{MAX_LIM_DIST}
  \lim_{n \rightarrow \infty} P \left\{ \frac{M_n - b_n}{a_n} \leq x
  \right\} = \lim_{n \rightarrow \infty} F^n\left( a_n x + b_n \right)
  = G(x),  
\end{align}
must be of the form within the location-scale family of $G_\gamma(\cdot),$ where $\gamma \in \R,$ and 
\begin{align}
\label{EV-DISTS}
 G_\gamma(x) := \exp \left( -\left( 1 + \gamma x
  \right)^{-1/\gamma}\right), \quad\quad \text{ for all } x \text{ such that  } 1 + \gamma x > 0.
\end{align}
If $\gamma = 0,$ the right-hand side is interpreted as $\exp(-\exp(-x)).$ This conclusion is called the extreme value theorem; see \cite{Fisher_Tippett_1928},
\cite{Gnedenko1943}. Refer \cite{deHaan2006extreme} for an account of the diverse collection of  light-tailed ($\gamma \leq 0)$ and heavy-tailed distributions $(\gamma > 0)$ that satisfy the extreme value condition. The parameter $\gamma \in \R$ is called the \emph{extreme value index} of the distribution $F.$ Larger the parameter $\gamma,$ the heavier the tail CDF $\bar{F}(x) := P(X > x)$ is; and the case $\gamma \geq 1$ occurs only when $E[X] = \infty.$ 

It is important to note that $F$ itself need not be an extreme-value distribution of the form $G_\gamma$. This is analogous to how the central limit theorem does not require the underlying i.i.d.\ summands themselves to be normally distributed. For a given $\gamma \in \mathbb{R}$, distributions $F$ satisfying \eqref{MAX_LIM_DIST} are said to belong to the \textit{max-domain of attraction} of the extreme-value distribution $G_\gamma$. We denote the collection of all such distributions, namely the max-domain of attraction of $G_\gamma$, by $\mathcal{D}_\gamma$.

It is well known that the class of all distributions satisfying the extreme-value condition,
$\mathcal{D} := \bigcup_{\gamma \in \mathbb{R}} \mathcal{D}_\gamma,$
is dense in the space of univariate distribution functions; see \cite{Leonetti_Khorrami_Chokam_2022}. Consequently, restricting attention to distributions satisfying the extreme-value condition in Definition~\ref{defn:EV-condition} still permits a remarkably rich semiparametric class, since $\mathcal{D}$ is dense in the space of all univariate probability distributions.

%Hence our assumption that $F$ satisfies Definition \ref{defn:EV-condition} allows us to operate with a rich class of semi-parametric probability distributions that are dense in the space of all univariate probability distributions. 

%for details.

%to denote the collection of all distribution functions whose normalized maximum \(M_n\) converges to the extreme-value distribution \(G_\gamma\) in the sense of \eqref{EV-DISTS}. In other words, \(\mathcal D_\gamma\) is the max-domain of attraction of \(G_\gamma\). 

%\subsection{Distribution of the Maximum}
%\label{sec:evt}

%\subsection{Distribution of the $k$-Largest Order Statistics}

\section{Optimal Asymptotic Competitive Ratio}
\label{sec:dp}
Recall that in the $k$-unit prophet inequality problem, a decision-maker observes a sequence $\{X_1,\ldots,X_n\}$ of i.i.d. rewards  arriving sequentially from a known distribution \(F\). Constrained to accept at most $k$ arrivals from the 
sequence $\{X_1,\ldots,X_n\},$ the decision-maker must make an immediate accept-or-reject decision upon the arrival of each reward. 

Given $k \leq n,$ let $\vdp(n,k)$ denote the  expected reward of the optimal sequential (online) policy. If we let $\vdp(t,j)$ denote the  expected reward of the optimal policy when $j \leq k$ units are still to be accepted from the arrivals $\{X_{n-t+1},\ldots, X_n\},$ then one may write the dynamic programming (DP) equation 
\begin{align}
    \vdp(t,j) = \max_{\tau \geq 0} \left\{ \left( \mathbb{E}\left[ X \mid X \geq \tau \right] + \vdp({t-1,j-1}) \right) P(X \geq \tau) + \vdp(t-1,j)P(X < \tau)\right\} 
    \label{eq:dp}
\end{align}
for the optimal value function, coupled with the initial conditions $\vdp(0,j) = 0$ and $\vdp(t,0) = 0$ for all $j \leq k,t \leq n.$ Likewise, the expected reward collected by the Prophet can be described by
\begin{align}
    \mu_{n,k}\ :=\ \mathbb{E}\Big[\sum_{j=0}^{k-1} X_{n-j:n}
\Big] = \sum_{j=0}^{k-1} 
\mathbb{E}\Big[ X_{n-j:n} 
\Big], 
    \label{eq:prophet-reward}
\end{align}
where $X_{1:n}\le X_{2:n} \cdots\le X_{n:n}$ be the sorted list of  $\{X_1,\ldots,X_n\}$ denoting the order statistics. 

Equipped with the above notation, one can define the instance-dependent asymptotic competitive ratio for any given $k \geq 1$ and probability distribution $F$ by
\begin{align*}
    \acr_k(F) := \lim_{n \rightarrow \infty} \frac{\vdp(n,k)}{\mu_{n,k}}
\end{align*}

\subsection{A sharp characterization of the asymptotic competitive ratio}
Our first main result on the instance-dependent asymptotic competitive ratio, developed by analyzing the dynamic programming equation \eqref{eq:dp}, can be stated as follows. Throughout the paper, the notation $\Gamma(z) := \int_0^\infty t^{z-1}e^{-t}\mathrm{d}t$ is the gamma function. 

\begin{theorem}
    Let $F$ be a distribution over $\R^+$ that satisfies the extreme value condition. Then the optimal asymptotic competitive ratio attainable by the dynamic program solution 
    is  given by   
    \begin{align}
       \acr_k(F)  = 
       \begin{cases}
            (1-\gamma)^{1-\gamma} \frac{ v_k \Gamma(k)}{\Gamma(k+1-\gamma)}, &\text{if } \gamma  \in (0,1),\\
            1, \qquad\qquad  &\text{otherwise},\\
       \end{cases}
       \label{eq:acr-dp}
    \end{align}
    where $\gamma$ is the extreme value index of the distribution $F$ and the sequence $\{v_k: k \geq 1\}$ is obtained recursively from $v_1 = 1,$ and for any $k > 1,$ $v_k - v_{k-1}$ is the unique positive value of $x$ solving $x^{1/\gamma} + v_{k-1}x^{1/\gamma-1} - 1 = 0.$
\label{thm:acr-dp}
\end{theorem}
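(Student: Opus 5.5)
The plan is to compute both the numerator $\vdp(n,k)$ and the denominator $\mu_{n,k}$ to first order after dividing by the EVT scaling $a_n$, and then take the ratio. The main case is $\gamma\in(0,1)$. Here we may take $b_n=0$ and $a_n=U(n):=F^{\leftarrow}(1-1/n)$, which is regularly varying with index $\gamma$, and the tail satisfies $t\,\bar F(a_t y)\to y^{-1/\gamma}$ for each $y>0$.

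\textbf{Prophet.} The top $k$ order statistics, normalized by $a_n$, converge jointly to $(\Gamma_1^{-\gamma},\dots,\Gamma_k^{-\gamma})$, where $\Gamma_j$ are the arrival times of a unit-rate Poisson process. Since $\gamma<1$, Potter bounds give uniform integrability, so $\E[X_{n-j+1:n}]/a_n\to\Gamma(j-\gamma)/\Gamma(j)$. Summing with the telescoping identity $\sum_{j=1}^k \Gamma(j-\gamma)/\Gamma(j)=\Gamma(k+1-\gamma)/((1-\gamma)\Gamma(k))$, which is checked by induction on $k$, gives
\[
\frac{\mu_{n,k}}{a_n}\to\frac{\Gamma(k+1-\gamma)}{(1-\gamma)\Gamma(k)}.
\]

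\textbf{Dynamic program.} Rewrite \eqref{eq:dp} in the standard form
\[
\vdp(t,j)-\vdp(t-1,j)=\E\big[(X-\Delta_{t-1,j})^+\big],\qquad \Delta_{t-1,j}:=\vdp(t-1,j)-\vdp(t-1,j-1),
\]
since the optimal threshold is $\tau=\Delta_{t-1,j}$. I will prove by induction on $j$ that $\vdp(t,j)/a_t\to w_j$ as $t\to\infty$, where the constants $w_j$ are still to be identified. Assume this holds, so that $\Delta_{t,j}\approx a_t(w_j-w_{j-1})$. Karamata's theorem gives $\E[(X-c\,a_t)^+]\approx \frac{a_t}{t}\frac{\gamma}{1-\gamma}c^{1-1/\gamma}$. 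On the other hand, regular variation of $a_t$ gives $a_t w_j-a_{t-1}w_j\approx \gamma\, w_j\, a_t/t$. Matching the two expressions for the increment yields
\[
w_j=\frac{(w_j-w_{j-1})^{1-1/\gamma}}{1-\gamma}.
\]
Write $w_j=c\,v_j$ with $c=(1-\gamma)^{-\gamma}$ and $x=v_j-v_{j-1}$. Then this equation becomes $v_j=x^{1-1/\gamma}$, which is exactly $x^{1/\gamma}+v_{j-1}x^{1/\gamma-1}=1$; the left side is increasing in $x>0$, so the positive root is unique. The base case $w_0=0$ gives $v_1=1$. Dividing $c\,v_k$ by the prophet limit produces $(1-\gamma)^{1-\gamma}v_k\Gamma(k)/\Gamma(k+1-\gamma)$, as claimed.

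\textbf{Justifying the limit.} To make the heuristic rigorous I will argue by induction on $j$. I sandwich $\vdp(t,j)$ between sequences $a_t(w_j\pm\epsilon)$ for $t\ge t_0(\epsilon)$. The key point is that the map $w\mapsto$ (increment) is monotone, so a discrete comparison or Gronwall-type argument applies: the recursion $u_t-u_{t-1}=\frac{a_t}{t}\,\phi(u_{t-1}/a_{t-1})$ attracts $u_t/a_t$ to the fixed point, because $\gamma w$ and $\frac{\gamma}{1-\gamma}(w-w_{j-1})^{1-1/\gamma}$ cross transversally there. Uniformity of the Karamata approximation on compact sets of $c$ comes from uniform convergence in regular variation. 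I expect this sandwich step to be the main obstacle, in particular showing that early-time errors (small $t$) wash out because their contribution is $o(a_n)$.

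\textbf{Remaining cases.} For $\gamma\le 0$ one has $a_n/b_n\to 0$, where $b_n\to x^*<\infty$ if $\gamma<0$ and $b_n\to\infty$ if $\gamma=0$. So $\mu_{n,k}=k\,b_n(1+o(1))$. A fixed threshold set at the $(k+m)/n$ upper quantile, with $m\to\infty$ slowly, accepts $k$ items with probability tending to one, each of value $b_n(1+o(1))$. Hence the ratio tends to $1$, and it cannot exceed $1$. For $\gamma\ge1$ the expectations are infinite or degenerate; I would treat this case by the stated convention, noting that the ratio of the dominating terms tends to $1$ as in the limiting form of the formula.
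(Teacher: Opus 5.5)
For $\gamma\in(0,1)$ you follow the paper's route, and your algebra is correct. You use the same recursion $\vdp(t,j)=\vdp(t-1,j)+R(\Delta_{t-1,j})$ with $R(s)=\E[(X-s)^+]$, the same Karamata scaling, the same fixed-point equation (your $w_j$ is the paper's $g_j=(1-\gamma)^{-\gamma}v_j$), and the same prophet computation.

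The difference is how the limit is justified. The paper runs a drift argument on $V_j(n)/\widetilde U(n)$ (Lemma~\ref{lem:dp-asymptotic-recursion}). That argument needs an a priori bound $V_j=O(U)$ and a separate proof that the normalized gap stays away from zero. Your sub/supersolution sandwich avoids both and works, provided two things hold:
\begin{itemize}
\item You use that the full update $\Phi(u,v)=u+R(u-v)$ is nondecreasing in $u$ and in $v$. This is true because $R$ is nonincreasing and $1$-Lipschitz, and it is what propagates the comparison, not monotonicity of the increment alone.
\item You absorb early-time errors by shifting the barriers by constants $\pm C$. This is harmless because $R$ is nonincreasing and $U(t)\to\infty$.
\end{itemize}
One step must be corrected: regular variation does \emph{not} give $a_t-a_{t-1}\approx\gamma a_t/t$. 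The theorem allows integer-valued $F$ with $\bar F\in\mathcal{RV}_{-1/\gamma}$. Then $U$ is a step function with $U(t)=U(t-1)$ for most $t$, so $U(t)(w_j+\epsilon)$ fails to be a supersolution at those steps. Build the barriers instead from a smoothly varying $\widetilde U\sim U$ with $\widetilde U(t-1)/\widetilde U(t)=1-\gamma/t+o(1/t)$, as the paper does via the Smooth Variation Theorem.

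For $\gamma\le 0$ your fixed-threshold squeeze is a genuinely different and lighter route. The paper proves Proposition~\ref{prop:dp-value-gamma-le-0} through the deficiency recursion of Remark~\ref{rem:weibull_counterpart}, which yields a second-order expansion of $\vdp$. It proves Proposition~\ref{prop:dp-value-gamma-eq-0} by squeezing between the CE heuristic, whose acceptance set is a uniform random $k$-subset, and the prophet. You obtain only the ratio, but that is all the theorem needs.

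Two small repairs are needed:
\begin{itemize}
\item In the Gumbel case $x^*$ may be finite, so drop ``$b_n\to\infty$''; only $a_n/b_n\to 0$ matters.
\item When $x^*=\infty$, the upper bound $\mu_{n,k}\le kU(n)(1+o(1))$ is a statement about expectations. It needs uniform integrability (the extended-Potter argument of Theorem~\ref{thm:prophet}, Case 3), not just weak convergence of the normalized order statistics. For $\gamma<0$, and for $\gamma=0$ with finite endpoint, the trivial bound $\mu_{n,k}\le kx^*$ suffices.
\end{itemize}
Finally, like the paper, you give no real argument for $\gamma\ge 1$. The paper's proof of the ``otherwise'' line also treats only $\gamma\le 0$.
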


Observe that the $\acr_k(F)$ characterization in \eqref{eq:acr-dp} depends on the probability distribution $F$ only via its extreme value index. Moreover, it is smaller than 1 only if $\gamma \in (0,1),$ the case corresponding to $F$ being a heavy-tailed distribution of the regularly varying type with finite mean.  For the single-unit case, we obtain 
\begin{align*}
\acr_1(F) = \min\left\{\frac{(1-\gamma)^{-\gamma}}{\Gamma(1-\gamma)},1\right\} ,
\end{align*}
which is  at least 0.776 approximately. These observations specialized to the $k=1$ case match with the conclusions of \cite{KennedyKertz1991}. The $k > 1$ case does not bring out such explicit characterization due to the recursive characterization of $v_k,$ which in turn features a non-trivial dependence on $\gamma.$ The constant $v_k$ can be computed numerically with relative ease, by means of the recursion identified in the statement of Theorem \ref{thm:acr-dp}. The behavior of $\{v_k\}_{k \geq 1},$ for different values of $k,\gamma,$ is considered numerically in Section \ref{sec:num-exp}  and asymptotically in the subsequent Section \ref{sec:asymp}. The instance-dependent nature of the analysis manifests itself through an elementary recursion that is substantially easier to evaluate numerically, especially when compared to the task of identifying the parameters \(\theta_1^\ast,\ldots,\theta^\ast\) in \eqref{eq:CR-Brustle}. The latter corresponds to the inherently more challenging problem of establishing worst-case lower bounds that hold uniformly over all distributions.
 
\subsection{An understanding of $\acr_k$ for large $k$}
\label{sec:asymp}
Although the instance-dependent asymptotic competitive ratio does not admit a closed-form expression or explicit bound for all values of \(k\), it exhibits sufficient regularity to explicitly characterize the first-order dependence on \(\gamma\) and \(k\) for sufficiently large \(k\).

\begin{proposition}
Suppose $\gamma \in (0,1)$ in Theorem \ref{thm:acr-dp}. Then there exists a constant $M \in \R$ such that 
\begin{align}
    \left \vert \acr_k(F) - \left\{ 1 - \frac{\gamma(1-\gamma)}{2} \frac{\log k}{k} \right\} \right\vert \leq \frac{M}{k}, 
    \label{eq:dp-large-k}
\end{align}
for every $k \geq 1$.
\label{prop:dp-large-k}
\end{proposition}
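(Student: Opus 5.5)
The plan is to reduce everything to the asymptotics of the sequence $\{v_k\}$ from Theorem~\ref{thm:acr-dp}. For $\gamma\in(0,1)$, \eqref{eq:acr-dp} gives
\[
\acr_k(F)=(1-\gamma)^{1-\gamma}\,v_k\,\frac{\Gamma(k)}{\Gamma(k+1-\gamma)}.
\]
The gamma ratio is classical: $\Gamma(k)/\Gamma(k+1-\gamma)=k^{-(1-\gamma)}\bigl(1+O(1/k)\bigr)$, by Stirling or Wendel's inequality. So \eqref{eq:dp-large-k} will follow once I show
\[
v_k=\Bigl(\tfrac{k}{1-\gamma}\Bigr)^{1-\gamma}\Bigl(1-\tfrac{\gamma(1-\gamma)}{2}\tfrac{\log k}{k}+O(1/k)\Bigr).
\]
The prefactor $(1-\gamma)^{1-\gamma}$ then cancels, and the product of the two $1+O(\cdot)$ factors gives the claim for large $k$. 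For the finitely many small $k$, both $\acr_k(F)$ and the bracketed expression are bounded, so enlarging $M$ gives the bound for every $k\ge 1$.

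\textbf{Step 1: simplify the recursion.} Write $d_k=v_k-v_{k-1}$. The defining equation $d^{1/\gamma}+v_{k-1}d^{1/\gamma-1}=1$ factors as $d_k^{1/\gamma-1}(d_k+v_{k-1})=1$, that is, $d_k^{(1-\gamma)/\gamma}\,v_k=1$. With $\alpha:=\gamma/(1-\gamma)$ this is the exact implicit recursion
\[
v_k-v_{k-1}=v_k^{-\alpha}.
\]
It follows that $v_k$ is increasing, $v_k\ge 1$, and $v_k\to\infty$; otherwise the increments would stay bounded below, a contradiction.

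\textbf{Step 2: linearize.} Set $w_k:=v_k^{1+\alpha}=v_k^{1/(1-\gamma)}$. From Step 1, $v_{k-1}=v_k(1-v_k^{-1-\alpha})=v_k(1-1/w_k)$, hence
\[
w_{k-1}=w_k(1-1/w_k)^{1+\alpha}.
\]
A Taylor expansion, valid because $w_k\ge 1$ and $1/w_k\to 0$, gives
\[
w_k-w_{k-1}=(1+\alpha)-\frac{\alpha(1+\alpha)}{2w_k}+O(w_k^{-2}).
\]
I then proceed in two passes.
\begin{itemize}
\item[(i)] The crude bound $w_k-w_{k-1}=(1+\alpha)+O(1/w_k)$ yields $w_k=(1+\alpha)k+o(k)$; in particular $w_k\asymp k$.
\item[(ii)] Substituting $1/w_k=\frac{1}{(1+\alpha)k}+O\bigl(\frac{\log k}{k^2}\bigr)$ back in and summing gives $w_k=(1+\alpha)k-\frac{\alpha}{2}\log k+O(1)$, since $\sum O(k^{-2}\log k)$ and $\sum O(k^{-2})$ both converge.
\end{itemize}

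\textbf{Step 3: return to $v_k$.} We have
\[
v_k=w_k^{1-\gamma}=\bigl((1+\alpha)k\bigr)^{1-\gamma}\Bigl(1-\frac{(1-\gamma)\alpha}{2(1+\alpha)}\frac{\log k}{k}+O(1/k)\Bigr),
\]
where $1+\alpha=1/(1-\gamma)$ and $(1-\gamma)\alpha/(1+\alpha)=\gamma(1-\gamma)$. This is exactly the target expansion.

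\textbf{Main obstacle.} The delicate part is Step 2: keeping the error terms rigorous and uniform, so that the $O(1)$ remainder in $w_k$ really is bounded rather than merely $o(\log k)$. I would handle this by first proving explicit two-sided bounds $c_1k\le w_k\le c_2k$, for instance by induction using $1+\alpha-\frac{C}{w}\le w_k-w_{k-1}\le 1+\alpha$. The upper bound on the increment holds because $(1-x)^{1+\alpha}\ge 1-(1+\alpha)x$. With these bounds in hand, summing the explicit Taylor remainder bounds gives the result.
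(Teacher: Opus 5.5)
Your proposal is correct and is essentially the paper's proof. Your $w_k=v_k^{1/(1-\gamma)}$ is just the paper's $s_k=(1-\gamma)v_k^{1/(1-\gamma)}$ rescaled, and the remaining steps match the paper's lemmas on $B_k$, $s_k$ and $A_k$: the exact recursion $w_{k-1}=w_k(1-1/w_k)^{1+\alpha}$, the Taylor expansion of the increment, the bootstrap from $w_k\asymp k$ to $w_k=(1+\alpha)k-\frac{\alpha}{2}\log k+O(1)$, the Stirling estimate for $\Gamma(k)/\Gamma(k+1-\gamma)$, and enlarging $M$ for small $k$.

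One small point: in pass (ii), the estimate $1/w_k=\frac{1}{(1+\alpha)k}+O(\log k/k^2)$ needs $w_k=(1+\alpha)k+O(\log k)$, not merely $o(k)$. This follows at once by summing the $O(1/k)$ increment error once you have $w_k\asymp k$, which is what the paper does.
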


In turn, Proposition \ref{prop:dp-large-k} allows us to derive an asymptotic lower bound that holds for all distributions satisfying the extreme value condition. This is noted in Corollary \ref{cor:dp-large-k} below. 

\begin{corollary}
Let $F$ be a distribution over $\R^+$ that satisfies the extreme value condition. Then given any $\varepsilon > 0,$ there exists $k_\varepsilon$ sufficiently large such that 
\begin{align*}
    \acr_k(F) \geq 1-\frac{\log k}{8k}[1+\varepsilon], \quad \text{for all }  k \geq k_\varepsilon, 
\end{align*}
irrespective of the extreme value index $\gamma.$
\label{cor:dp-large-k}
\end{corollary}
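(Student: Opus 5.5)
The plan is to combine Theorem \ref{thm:acr-dp} with Proposition \ref{prop:dp-large-k}. The one real difficulty is that the constant $M$ in Proposition \ref{prop:dp-large-k} depends on $\gamma$, whereas the corollary requires a threshold $k_\varepsilon$ that does not depend on $\gamma$.

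First, Theorem \ref{thm:acr-dp} gives $\acr_k(F) = 1$ whenever $\gamma \notin (0,1)$, so those cases hold trivially. For $\gamma\in(0,1)$, the leading coefficient in Proposition \ref{prop:dp-large-k} is $\gamma(1-\gamma)/2$. This is maximized at $\gamma = 1/2$, where it equals $1/8$. Hence for each fixed $\gamma$,
\[
\acr_k(F) \ge 1 - \frac{\log k}{8k} - \frac{M_\gamma}{k},
\]
and since $M_\gamma/k \le \varepsilon \log k/(8k)$ once $\log k \ge 8M_\gamma/\varepsilon$, the bound follows for each fixed $\gamma$.

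To make $k_\varepsilon$ independent of $\gamma$, I would split $(0,1)$ into a compact middle part $[\delta,1-\delta]$ and two boundary strips.

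\emph{Middle part.} I would revisit the proof of Proposition \ref{prop:dp-large-k} and argue that $M_\gamma$ can be chosen uniformly over $\gamma\in[\delta,1-\delta]$. This should hold because the recursion defining $v_k$ and the Gamma-ratio asymptotics $\Gamma(k)/\Gamma(k+1-\gamma) = k^{\gamma-1}(1+O(1/k))$ have error terms that are continuous in $\gamma$ on compacts. Stirling's expansion gives exactly this uniformity for the Gamma part. For $v_k$ one needs the expansion $v_k \approx k^{1-\gamma}/(1-\gamma)^{1-\gamma}\cdot(1 - \tfrac{\gamma(1-\gamma)}{2}\tfrac{\log k}{k}+O(1/k))$ with an $O(1/k)$ term uniform in $\gamma$, which I would verify by checking that each constant in that derivation is a continuous function of $\gamma$.

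\emph{Boundary strips.} For $\gamma\in(0,\delta)\cup(1-\delta,1)$, the leading coefficient satisfies $\gamma(1-\gamma)/2 \le \delta/2$, which is far below $1/8$ when $\delta$ is small. What is needed there is a cruder but uniform bound of the form $1-\acr_k \le C(\delta\log k + 1)/k$ with $C$ absolute. I would try to obtain this from a monotone comparison: by induction, show that $v_k$ dominates the candidate sequence $w_k$ built from the expansion with a slightly larger coefficient. This reduces to checking that the map $v_{k-1}\mapsto v_{k-1}+x(v_{k-1})$ is increasing and that $w_k$ is a subsolution, both of which should hold uniformly in $\gamma$.

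I expect the boundary strips to be the main obstacle, particularly $\gamma\to1$, where $(1-\gamma)^{1-\gamma}$ and $\Gamma(k+1-\gamma)$ degenerate. As a fallback, I would use the simpler universal bound $\acr_k \ge 1 - k^ke^{-k}/k!$ from static thresholds. This does not suffice by itself, since $1/\sqrt{2\pi k}$ is not $o(\log k/k)$. The strip analysis therefore really needs the $\gamma(1-\gamma)$ factor to show up in a uniform $O(\gamma(1-\gamma)\log k/k + 1/k)$ bound, which I would try to extract directly from the $v_k$ recursion.
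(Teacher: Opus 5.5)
Your fixed-$\gamma$ argument is exactly the paper's proof, and it already proves the corollary as stated. That argument handles $\gamma\notin(0,1)$ trivially via Theorem~\ref{thm:acr-dp}, then applies $\gamma(1-\gamma)\le 1/4$ in Proposition~\ref{prop:dp-large-k} and takes $\log k\ge 8M/\varepsilon$. In the statement, $F$ (and hence $\gamma$) is fixed before $\varepsilon$ and $k_\varepsilon$. The paper's choice $k_\varepsilon=\lceil\exp(8M/\varepsilon)\rceil$ depends on $\gamma$ through $M$, and ``irrespective of $\gamma$'' refers only to the $\gamma$-free form of the lower bound.

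So the ``one real difficulty'' you identify is not part of the claim, and you can drop the middle-part/boundary-strip program. As written it would not establish the stronger uniform-in-$\gamma$ statement anyway. Uniformity of $M_\gamma$ on $[\delta,1-\delta]$ is only asserted, and the strips are left open.

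Your diagnosis near $\gamma=1$ is also off, should you ever pursue that version. The factors $(1-\gamma)^{1-\gamma}$ and $\Gamma(k+1-\gamma)$ are benign there. The real issue is $v_k$: since $s_1=1-\gamma\to 0$, the $O(1)$ term in Lemma~\ref{lem:sk-asymp} is not uniform in $\gamma$.
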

Thus, the performance guarantee obtainable with the optimal dynamic program turns out to be at least $1-\frac{\log k}{8k}[1+\varepsilon]$ for any $\varepsilon > 0$ and sufficiently large $k$, improving upon the respective, tight $1 - \frac{1}{\sqrt{2\pi k}}$ guarantee achievable with fixed threshold algorithms \cite{correa2021}.  Similar to the fixed threshold setting, the worst-case performance is achieved when the extreme value index $\gamma = 1/2.$ 

In addition to the asymptotic guarantees that apply for large \(k\), we examine the quantity \(\acr_{n,k}\) identified in Theorem~\ref{thm:acr-dp} via a numerical evaluation of the constant \(v_k\) for different values of \(k,\gamma\); see Section~\ref{sec:num-exp}.

\section{Comparisons with the CE Heuristic} 
\label{sec:ce}
The objective of this section is to quantify the performance losses incurred,  relative to the oracle and the optimal dynamic program,  when a decision-maker restricts attention to simpler class of adaptive algorithms, such as the Certainty Equivalent (CE) heuristic. The CE heuristic is widely used in the rich literature on online resource allocation (see \cite{kleywegt,bumpensanti_wang,jasin_sinha,balseiro_2023} and the references therein) and is known to achieve optimal regret growth under the fluid scaling assumption \(k \propto n\). The budget-ratio policy, devised earlier in \cite{arlotto2019} for the multi-secretary problem, can be understood as a specialization of the broadly applicable CE heuristic to this specific setting.

The idea behind Certainty Equivalent (CE) heuristic is to  dynamically adjust the acceptance threshold so that the acceptance probability at each time matches the rate required to achieve the target of \(k\) acceptances over \(n\) arrivals.  Specifically, suppose $F$ is continuous. Then when $j \leq k$ units are still to be accepted from the remaining $t$ arrivals $\{X_{n-t+1},\ldots, X_n\},$ %Then, upon observing the \((n-t+1)\)-th arrival, 
the CE heuristic accepts it if and only if its value exceeds the quantile $F^{\leftarrow}\!\left(1 - j/t \right)$
of the reward distribution \(F\). With the notation
\[F^{\leftarrow}(p) := \inf\{x : F(x) \geq p\}\]
for any $p \in [0,1],$ this rule ensures that the conditional probability of acceptance at time \(n-t+1\) is exactly \(j/t\), corresponding to the uniform acceptance rate required to exhaust the remaining capacity over the remaining $t$ arrivals. 
This intuitively appealing structure extends naturally to a much broader class of dynamic resource-constrained reward collection problems, including settings with multiple constraints.  We refer the reader to \cite{balseiro_2023} and the references therein for a comprehensive survey of the CE heuristic and its applications. 

Let $\vce(t,j)$ denote the expected reward that can be achieved with the  CE heuristic when $j \leq k$ acceptances need to be made over $t \leq n$ remaining arrivals. Then, due to the above described structure, the CE heuristic satisfies the recursion
\begin{align}
\vce(t,j)
&=
\Bigl(\,\mathbb{E}[X \mid X\ge \tau^{\tt{CE}}_{t,j}] + V^{\tt{CE}}(t-1,j-1)\Bigr)\,
P(X\ge \tau^{\tt{CE}}_{t,j})
\;+\;
\vce(t-1,j)\,
P(X< \tau^{\tt{CE}}_{t,j}),
\label{eq:ce}
\end{align}
for all $t \leq n, j \leq k;$ here 
\[
\tau^{\tt{CE}}_{t,j}
\;:=\;
F^{\leftarrow}\!\left(1-\frac{j}{t}\right).
\]
% \tau^{\mathrm{CE}}_{n,k}
% \;&:=\;
% F^{\leftarrow}\!\left(1-\frac{k}{n}\right).\\
% \label{eq:ce}
% \end{align}

Equipped with this notation, one can define the instance-dependent performance guarantee for the CE heuristic that holds for any given $k \geq 1$ and probability distribution $F$ as follows:
\begin{align*}
    \apx_k(F) := \lim_{n \rightarrow \infty} \frac{\vce(n,k)}{\mu_{n,k}},
\end{align*}
where $\mu_{n,k}$ is the Prophet's expected reward. 

\subsection{Asymptotic competitive ratio of the CE heuristic}
Our first result on the CE heuristic,  developed by analyzing the two-dimensional recursion \eqref{eq:ce}, can be stated as follows. 
\begin{theorem}
    Let $F$ be a distribution over $\R^+$ that satisfies the extreme value condition. Then the asymptotic competitive ratio attainable by the CE heuristic satisfies  
    \begin{align}
       \apx_k(F)  = 
       \begin{cases}
       \frac{\Gamma(k)\Gamma(k+1)}{\Gamma(k+1-\gamma)\Gamma(k+1+\gamma)} \sum_{r=1}^k \frac{\Gamma(r+\gamma)}{\Gamma(r+1)}r^{1-\gamma}  &\text{if } \gamma  \in (0,1),\\
           1, \qquad\qquad  &\text{if } \gamma \notin (0,1),  \end{cases}
           \label{eq:acr-ce}
    \end{align}
    where $\gamma$ is the extreme value index of the distribution $F.$ 
    \label{thm:acr-ce}
\end{theorem}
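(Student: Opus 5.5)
The plan is to analyze the CE recursion \eqref{eq:ce} through the quantile function. Put $U(s) := F^{\leftarrow}(1-1/s)$, so that $\tau^{\tt CE}_{t,j} = U(t/j)$. The key observation is that the CE policy, unlike the DP, has thresholds that do not depend on the value function. Hence $\vce(n,k)$ can be written as a sum over acceptance events. At each time $n-t+1$, the acceptance probability given the state is exactly $j/t$. On acceptance, the expected reward is $\E[X\mid X\ge U(t/j)] = (t/j)\int_0^{j/t} F^{\leftarrow}(1-u)\,\mathrm{d}u$.

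First I will identify the law of the state process. It is a P\'olya-urn/hypergeometric-type chain: with $j$ units left and $t$ arrivals left, accept with probability $j/t$. This is exactly the dynamics of choosing uniformly at random a $k$-subset of $\{1,\ldots,n\}$ sequentially. So the $r$-th acceptance from the end (the moment the remaining budget drops from $r$ to $r-1$) happens at a time whose remaining-arrivals count $T_r$ is distributed as the $r$-th smallest element of a uniform random $k$-subset of $\{1,\ldots,n\}$. The reward of that acceptance has conditional mean $m(T_r/r)$, where $m(s) := \E[X\mid X\ge U(s)]$. Therefore
\[
\vce(n,k) = \sum_{r=1}^k \E\big[m(T_r/r)\big].
\]

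Second, I will invoke EVT regular variation. If $\gamma\in(0,1)$, $U$ is regularly varying with index $\gamma$, and Karamata's theorem gives $m(s)\sim U(s)/(1-\gamma)$. As $n\to\infty$, $T_r/n$ converges in law to the $r$-th smallest of $k$ i.i.d. uniforms, which is Beta$(r,k-r+1)$. Using Potter bounds for dominated convergence, I expect
\[
\E\big[m(T_r/r)\big] \sim \frac{U(n)}{1-\gamma}\, r^{-\gamma}\,\E\big[B_r^{\gamma}\big],
\qquad B_r\sim\mathrm{Beta}(r,k-r+1),
\qquad \E\big[B_r^{\gamma}\big]=\frac{\Gamma(r+\gamma)\Gamma(k+1)}{\Gamma(r)\Gamma(k+1+\gamma)}.
\]
For the prophet, the standard result $\E[X_{n-j:n}]\sim U(n)\,\Gamma(j+1-\gamma)/\Gamma(j+1)$ together with the telescoping identity $\sum_{j=0}^{k-1}\Gamma(j+1-\gamma)/\Gamma(j+1)=\Gamma(k+1-\gamma)/\big((1-\gamma)\Gamma(k)\big)$ gives
\[
\mu_{n,k}\sim U(n)\,\frac{\Gamma(k+1-\gamma)}{(1-\gamma)\Gamma(k)}.
\]
Taking the ratio and using $r^{-\gamma}\Gamma(r+\gamma)/\Gamma(r) = r^{1-\gamma}\Gamma(r+\gamma)/\Gamma(r+1)$ should recover \eqref{eq:acr-ce} exactly.

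Third, I will handle $\gamma\notin(0,1)$. For $\gamma\le 0$, the top order statistics concentrate on scale $b_n$, and $U(s)/U(n)\to 1$ uniformly for $s\in[\delta n, n]$. Both $\vce$ and $\mu_{n,k}$ are then $\sim k\,U(n)$ (or $\sim k\,x^*$ when the right endpoint is finite), so the ratio tends to $1$; since $X\ge 0$, one needs $U(n)$ to be eventually positive. For $\gamma\ge1$ the mean is infinite; there I would either interpret the ratio as $1$ by convention or use the index-$\gamma$ blow-up of the truncated means, and I would follow whatever convention the paper uses for Theorem~\ref{thm:acr-dp}.

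The main obstacle is justifying the exchange of limit and expectation in $\E[m(T_r/r)]$. The difficulty is the event where $T_r$ is small, i.e. $T_r/r$ near $1$, which occurs with probability $O(1/n)$ scale, and where $m$ is evaluated far from the regular-variation regime. There $m(T_r/r)\le m(1)+$ small terms, which is a bounded contribution, so it is negligible against $U(n)\to\infty$. On $T_r\ge \delta n$, Potter's bounds give $m(T_r/r)/U(n)\le C(T_r/n)^{\gamma-\epsilon}$, which is uniformly integrable under the Beta law. I will split the expectation accordingly and send $\delta\to0$.
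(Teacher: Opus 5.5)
Your argument is correct. For $\gamma\le 0$ it is essentially the paper's argument: a lower bound from the uniform-subset structure of the acceptance times and an upper bound from $\vce\le\mu_{n,k}$. For $\gamma\in(0,1)$, however, you take a genuinely different route.

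The paper never uses the sampling structure in the case $\gamma\in(0,1)$. Its steps are:
\begin{itemize}
\item It starts from the one-step recursion of Lemma~\ref{lem:ce-recursive} and normalizes by a smoothly varying $\widetilde U\sim U$, so that $\widetilde U(n-1)/\widetilde U(n)=1-\gamma/n+o(1/n)$.
\item It derives the asymptotic linear recursion $\widehat g_k(n)-\widehat g_k(n-1)=\frac1n\bigl[-(k+\gamma)\widehat g_k(n-1)+k\widehat g_{k-1}(n-1)+\frac{k^{1-\gamma}}{1-\gamma}+o(1)\bigr]$.
\item It proves $\widehat g_k(n)\to w_k$ by induction on $k$ via Lemma~\ref{lem:linear-asymptotic-recursion}, with a priori boundedness coming from $\vce\le\mu_{n,k}$.
\item Only afterwards does it solve the $w_k$ recursion in closed form, by a Gamma-function telescoping (Lemma~\ref{lem:wk-closed-form}).
\end{itemize}

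You instead use the fact that CE thresholds do not depend on the value function. The acceptance times therefore form a uniform random $k$-subset, and $\vce(n,k)=\sum_{r=1}^k\E[m(T_r/r)]$ holds exactly for every $n$. The closed form then follows directly from $m\sim U/(1-\gamma)$ and the $\mathrm{Beta}(r,k-r+1)$ moment. It also has a transparent reading: the $r$-th summand is the acceptance made with $r$ units left.

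What each route buys:
\begin{itemize}
\item Yours bypasses the smooth variation theorem, the recursion lemma, and solving for $w_k$. Its exact finite-$n$ identity is also a better starting point for joint regimes such as $k=o(n)$; it is fully explicit for Pareto, where $m(s)=s^\gamma/(1-\gamma)$.
\item The paper's route runs in parallel with its DP analysis. There the thresholds depend on the value function, so no such sampling representation exists.
\end{itemize}

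Two remarks.

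First, the interchange of limit and expectation is simpler than your sketch, and as written your split misplaces the bounds. The integrand $m(T_r/r)$ is $O(1)$ only on $\{T_r\le K\}$, not on all of $\{T_r<\delta n\}$. Potter bounds are needed on the intermediate range, not on $\{T_r\ge\delta n\}$. Monotonicity of $m$ settles this cleanly. On $\{T_r<\delta n\}$ one has $m(T_r/r)/U(n)\le m(\delta n/r)/U(n)\to(\delta/r)^\gamma/(1-\gamma)$. On $\{T_r\ge\delta n\}$ the uniform convergence theorem applies. Bounded convergence then suffices, and Potter bounds are not needed at all.

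Second, on $\gamma\ge1$: the paper's proof of the second branch also covers only $\gamma\le 0$. But your premise that $\gamma\ge1$ forces $\E X=\infty$ fails at $\gamma=1$, for example with $\bar F(x)=1/(x\log^2x)$ for large $x$. In that case your own representation gives $m\in\mathcal{RV}_1$ and $\vce(n,k)\sim\frac{k}{k+1}m(n)$, while $\mu_{n,k}\sim\E[X_{n:n}]\sim m(n)$. The ratio is therefore $k/(k+1)$, so this boundary case of the statement is false as written and no convention recovers the value $1$.
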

Similar to Theorem \ref{thm:acr-dp}, the performance ratio $\apx_k(F)$ in \eqref{eq:acr-ce} depends on the probability distribution $F$ only via its extreme value index. When $k = 1,$ the performance of CE heuristic guarantees only approximately 0.5 fraction of the Prophet's  average reward. It improves with $k,$ and offers approximately 0.9 fraction of the Prophet's worst-case reward, even for  $k$ as small as $10;$ please see Table \ref{tab:worst_case_ratios} in Section \ref{sec:num-exp} for a detailed study of how the worst-case approximation ratios improves with $k.$ For large values of $k,$ Proposition \ref{prop:ce-large-k}  below provides an understanding of the CE heuristic's performance guarantee. 

\begin{proposition}
Suppose $\gamma \in (0,1)$ in Theorem \ref{thm:acr-ce}. Then there exists a constant $M' \in \R$ such that 
\begin{align}
    \left \vert \apx_k(F) - \left\{ 1 - \frac{\gamma(1-\gamma)}{2} \frac{\log k}{k} \right\} \right\vert \leq \frac{M'}{k}, 
    \label{eq:dp-large-k}
\end{align}
for every $k \geq 1$.
\label{prop:ce-large-k}
\end{proposition}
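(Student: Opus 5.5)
The plan is to work directly from the closed form in Theorem~\ref{thm:acr-ce}. For $\gamma\in(0,1)$, write
\[
\apx_k(F) = P_k\, S_k,\qquad P_k := \frac{\Gamma(k)\Gamma(k+1)}{\Gamma(k+1-\gamma)\Gamma(k+1+\gamma)},\qquad S_k := \sum_{r=1}^k \frac{\Gamma(r+\gamma)}{\Gamma(r+1)}\, r^{1-\gamma}.
\]
I will expand $P_k$ and $S_k$ separately to the needed order and then multiply the two expansions. The only analytic tool is the standard gamma-ratio expansion: for fixed real $a,b$ and $x\to\infty$,
\[
\frac{\Gamma(x+a)}{\Gamma(x+b)} = x^{a-b}\Bigl(1+\frac{(a-b)(a+b-1)}{2x}+O(x^{-2})\Bigr),
\]
which follows from Stirling's series. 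The implied constants depend only on $a$, $b$, and hence only on $\gamma$.

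\emph{Step 1 (prefactor).} Apply the expansion twice. With $(a,b)=(0,1-\gamma)$, the factor $\Gamma(k)/\Gamma(k+1-\gamma)$ equals $k^{\gamma-1}(1+O(1/k))$. With $(a,b)=(1,1+\gamma)$, the factor $\Gamma(k+1)/\Gamma(k+1+\gamma)$ equals $k^{-\gamma}(1+O(1/k))$. Multiplying gives $P_k = k^{-1}(1+O(1/k))$. The explicit $1/k$ coefficients are not needed, because this correction only produces an $O(1/k)$ error after multiplication by $S_k=O(k)$.

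\emph{Step 2 (the sum).} With $(x,a,b)=(r,\gamma,1)$, each summand is
\[
\frac{\Gamma(r+\gamma)}{\Gamma(r+1)}\, r^{1-\gamma} = 1-\frac{\gamma(1-\gamma)}{2r}+\rho_r,\qquad |\rho_r|\le C_\gamma r^{-2}.
\]
The bound $|\rho_r|\le C_\gamma r^{-2}$ holds for all $r\ge 1$ after enlarging $C_\gamma$ to absorb the finitely many small $r$. Summing, and using $\sum_{r\le k}1/r=\log k+O(1)$ together with the summability of $\sum_r\rho_r$, gives
\[
S_k = k-\frac{\gamma(1-\gamma)}{2}\log k + O(1).
\]

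\emph{Step 3 (combine).} Multiplying the two expansions,
\[
\apx_k(F) = \frac1k\bigl(1+O(1/k)\bigr)\Bigl(k-\frac{\gamma(1-\gamma)}{2}\log k+O(1)\Bigr) = 1-\frac{\gamma(1-\gamma)}{2}\frac{\log k}{k}+O(1/k).
\]
The cross terms are of order $O(\log k/k^2)$ and $O(1/k)$, both absorbed into $O(1/k)$. This yields the bound for all $k\ge k_0$. For the finitely many $k<k_0$, both $\apx_k(F)$ and the comparison term are bounded, so enlarging $M'$ makes the inequality hold for every $k\ge1$.

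The only delicate point is making the gamma-ratio expansion uniform with an explicit $O(x^{-2})$ remainder, as used in Step 2 for all $r\ge1$. I would cite it from the standard asymptotic expansion of $\Gamma(x+a)/\Gamma(x+b)$ (Tricomi–Erdélyi), or derive it from $\log\Gamma(x+a)=(x+a-\tfrac12)\log x - x+\tfrac12\log 2\pi+O(1/x)$ taken one order further. Everything else is routine bookkeeping.
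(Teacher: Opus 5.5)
Your proposal is correct and follows essentially the same route as the paper. The shared steps are: the factorization $\apx_k(F)=P_kS_k$; the gamma-ratio expansion yielding $P_k=k^{-1}(1+O(1/k))$ and $\Gamma(r+\gamma)r^{1-\gamma}/\Gamma(r+1)=1-\frac{\gamma(1-\gamma)}{2r}+O(r^{-2})$; summation to $S_k=k-\frac{\gamma(1-\gamma)}{2}\log k+O(1)$; multiplication; and enlarging the constant to cover the finitely many small $k$. The only cosmetic difference is that you make the $O(r^{-2})$ remainder hold for all $r\ge 1$ by enlarging $C_\gamma$, whereas the paper sums from some $r_0$ and absorbs the initial terms into the $O(1)$.
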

Interestingly, the leading order terms in the expansion for the CE heuristic's performance guarantee matches with that reported for the optimal dynamic program in Section \ref{prop:dp-large-k}. The worst-case performance guarantee holds in Corollary \ref{cor:ce-large-k} then as a simple consequence.

\begin{corollary}
Let $F$ be a distribution over $\R^+$ that satisfies the extreme value condition. Then given any $\varepsilon > 0,$ there exists $k'_\varepsilon$ sufficiently large such that 
\begin{align*}
    \apx_k(F) \geq 1-\frac{\log k}{8k}[1+\varepsilon], \quad \text{for all }  k \geq k'_\varepsilon,
\end{align*}
irrespective of the extreme value index $\gamma.$ 
\label{cor:ce-large-k}
\end{corollary}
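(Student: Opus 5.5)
We deduce Corollary \ref{cor:ce-large-k} from Theorem \ref{thm:acr-ce} and Proposition \ref{prop:ce-large-k}, in the same way Corollary \ref{cor:dp-large-k} follows from Proposition \ref{prop:dp-large-k}. We split according to the extreme value index $\gamma$ of $F$. If $\gamma \notin (0,1)$, Theorem \ref{thm:acr-ce} gives $\apx_k(F) = 1$ for every $k$, so the bound holds for all $k \geq 1$.

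For $\gamma \in (0,1)$ the key observation is that $\gamma(1-\gamma)/2 \leq 1/8$, with equality at $\gamma = 1/2$. Proposition \ref{prop:ce-large-k} then gives
\begin{align*}
\apx_k(F) \geq 1 - \frac{\gamma(1-\gamma)}{2}\frac{\log k}{k} - \frac{M'}{k} \geq 1 - \frac{\log k}{8k} - \frac{M'}{k}.
\end{align*}
It remains to absorb the $M'/k$ term. We need $M'/k \leq \varepsilon \log k/(8k)$, that is, $\log k \geq 8M'/\varepsilon$. This holds for all $k \geq k'_\varepsilon := \exp(8\max\{M',0\}/\varepsilon)$.

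The main obstacle is the phrase ``irrespective of the extreme value index'': the threshold $k'_\varepsilon$ must not depend on $\gamma$. The constant $M'$ in Proposition \ref{prop:ce-large-k} is stated for a fixed $\gamma$, so we must check that it can be chosen uniformly over $\gamma \in (0,1)$. I would do this by revisiting the expansion of the explicit formula \eqref{eq:acr-ce} and verifying that every remainder bound is uniform in $\gamma$. These remainders come from the Gamma-ratio asymptotics $\Gamma(r+\gamma)/\Gamma(r+1) = r^{\gamma-1}(1 + O(1/r))$ and $\Gamma(k)\Gamma(k+1)/(\Gamma(k+1-\gamma)\Gamma(k+1+\gamma)) = 1 + O(1/k)$, and from the Euler--Maclaurin step that turns $\sum_{r\le k} (1+\gamma(\gamma-1)/(2r)+\dots)$ into $k + \frac{\gamma(\gamma-1)}{2}\log k + O(1)$.

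Uniformity of these remainders follows from standard bounds such as Wendel's or Gautschi's inequality, which hold uniformly for $\gamma \in [0,1]$. Near the endpoints $\gamma \to 0$ or $\gamma \to 1$ the formula tends continuously to $1$, so there is no blow-up. Alternatively, if uniformity is not required, the constant $M'$ may be taken to depend on $\gamma$, and the argument above still yields a threshold $k'_\varepsilon$ for each fixed $F$.
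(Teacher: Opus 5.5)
Your proof is the paper's: you dispose of $\gamma\notin(0,1)$ via Theorem~\ref{thm:acr-ce}, use $\gamma(1-\gamma)\le 1/4$ in Proposition~\ref{prop:ce-large-k}, and absorb $M'/k$ by taking $k'_\varepsilon=\lceil\exp(8M'/\varepsilon)\rceil$; as in your fallback, the paper lets $M'$, and hence $k'_\varepsilon$, depend on $\gamma$. Your optional uniform-in-$\gamma$ upgrade goes beyond what the paper proves and is plausible via Wendel/Gautschi bounds, but it has two slips: the prefactor is $\frac{1}{k}\bigl(1+O(1/k)\bigr)$ rather than $1+O(1/k)$, and as $\gamma\uparrow 1$ the formula tends to $k/(k+1)$, not $1$ (this is harmless, since it is still $1-O(1/k)$).
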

Being an adaptive algorithm, we see that the worst-case guarantee reported for CE heuristic in Corollary \ref{cor:ce-large-k} is considerably better than the tight $1-1/\sqrt{2\pi k}$ guarantee of fixed threshold algorithms, for  large values of $k.$

\subsection{Finer comparison between dynamic program and CE heuristic}
Though the leading order terms of $\acr_k(F)$ and $\apx_k(F),$ capturing the instance-specific performance guarantees match in Propositions \ref{prop:dp-large-k} and \ref{prop:ce-large-k},  
 finer analysis below reveals that the regret can be large and divergent when compared  to the optimal dynamic program.
\begin{theorem}
    Let $F$ be a distribution over $\R^+$  with finite mean and that satisfies the extreme value condition. Then the additional regret incurred by the CE heuristic, relative to the DP, is given by   
    \begin{align*}
    \lim_{k\to\infty}\ \lim_{n\to\infty}\ 
    \frac{\vdp(n,k)-\vce(n,k)}{k^{-\gamma}F^{\leftarrow}\left(1- \frac{1}{n} \right)}
    \;=\; c_\gamma .
    \end{align*}
    % \begin{align*}
    %  \vdp (n,k) - \vce (n,k)  =  c_\gamma k^{-\gamma}F^{\leftarrow}\left( 1- \frac{1}{n}\right) [1+ o(1)], \quad \text{ as } n \rightarrow \infty, 
    % \end{align*} 
     where $c_\gamma$ is a finite positive constant if $\gamma \in (0,1)$ and $c_\gamma = 0$ if $\gamma \le 0.$ Consequently, as $n \to \infty$, the CE heuristic's additional regret $\vdp (n,k) - \vce (n,k)$  is divergent  if the distribution $F$'s extreme value index  $\gamma \in (0,1).$ 
    \label{thm:regret-ce-dp}
\end{theorem}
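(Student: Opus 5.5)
The plan is to work with value functions normalized by the scale $a_n := F^{\leftarrow}(1-1/n)$ and to reuse the limiting recursions that underlie Theorems \ref{thm:acr-dp} and \ref{thm:acr-ce}. For fixed $j\le k$, consider the limits $\lim_{n\to\infty} \vdp(n,j)/a_n$ and $\lim_{n\to\infty} \vce(n,j)/a_n$.

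\emph{Case $\gamma\in(0,1)$.} Here $\mu_{n,k}/a_n \to \sum_{r=1}^k \Gamma(r-\gamma)/\Gamma(r)$, which is the mean of the top $k$ points of the limiting Poisson process with intensity $\gamma^{-1}x^{-1/\gamma-1}$. Theorems \ref{thm:acr-dp} and \ref{thm:acr-ce} then give
\begin{align*}
\lim_{n\to\infty}\frac{\vdp(n,k)-\vce(n,k)}{a_n} \;=\; \bigl(\acr_k(F)-\apx_k(F)\bigr)\, m_k, \qquad m_k := \sum_{r=1}^k \frac{\Gamma(r-\gamma)}{\Gamma(r)} .
\end{align*}
It remains to show that $k^{\gamma}\bigl(\acr_k-\apx_k\bigr)m_k \to c_\gamma\in(0,\infty)$. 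Since $m_k \sim k^{1-\gamma}/(1-\gamma)$, this reduces to proving $\acr_k-\apx_k = \tilde c_\gamma/k + o(1/k)$ with $\tilde c_\gamma>0$. Propositions \ref{prop:dp-large-k} and \ref{prop:ce-large-k} agree only up to $O(1/k)$, so the $1/k$ coefficients must be extracted exactly. I would do this in three steps.
\begin{enumerate}
\item Expand the CE expression \eqref{eq:acr-ce} with Stirling's series. The summand satisfies $\Gamma(r+\gamma)r^{1-\gamma}/\Gamma(r+1) = 1+\gamma(\gamma-1)/(2r)+O(r^{-2})$. Hence the sum equals $k + \frac{\gamma(\gamma-1)}{2}\log k + C_1 + o(1)$ for an explicit constant $C_1$ involving digamma/zeta-type constants. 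The gamma prefactor equals $k^{-1}\bigl(1+\alpha/k+O(k^{-2})\bigr)$.
\item Expand $v_k$. Write $d_k=v_k-v_{k-1}$, which solves $d^{1/\gamma}+v_{k-1}d^{1/\gamma-1}=1$. Since $v_{k-1}\approx c\,k^{\gamma}$-type growth, $d_k$ has an expansion in $v_{k-1}$. Setting $v_k = A k^{\gamma}(1+ b\log k/k + c/k + o(1/k))$ and matching terms recursively determines $A,b,c$. The value of $b$ must reproduce the $\gamma(1-\gamma)/2$ coefficient, which serves as a check. The constant $c$ requires summing the error of the recursion over all $k$, so it is a global constant defined by a convergent series.
\item Show that the resulting difference $\tilde c_\gamma$ is strictly positive.
\end{enumerate}
Step 2 and the positivity in Step 3 are the main obstacle, because the constants are global rather than local. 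My first approach would be to compare the two value sequences directly via the one-step Bellman recursions in the limit. The DP uses an optimal threshold while CE uses the quantile $j/t$. Each stage therefore loses a strictly positive amount, of second order in the threshold mismatch, and these losses accumulate to a summable series whose total is of order $k^{-\gamma}$ after normalization. This route gives positivity automatically, since every term is a nonnegative optimization gap, and it yields $c_\gamma$ as an explicit positive series.

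\emph{Case $\gamma\le 0$.} Here $a_n$ may be of order one or may be the right endpoint, and both ratios tend to 1. I would show $\lim_n(\vdp-\vce)/a_n=0$ for each fixed $k$. For $\gamma<0$ with finite endpoint $x^*$, and for $\gamma=0$, the top $k$ order statistics concentrate: $(X_{n-j:n}-b_n)/a_n'$ is tight with $a_n'/b_n\to 0$. Both policies then collect $k b_n(1+o(1))$, so their difference is $o(b_n)=o(a_n)$, which gives $c_\gamma=0$.

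Finally, divergence follows because $a_n=F^{\leftarrow}(1-1/n)$ is regularly varying with index $\gamma>0$, so $a_n\to\infty$. Taking $k$ large enough that the normalized limit is near $c_\gamma>0$, the gap $\vdp-\vce$ grows like $c_\gamma k^{-\gamma}a_n\to\infty$.
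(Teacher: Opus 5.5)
For $\gamma\in(0,1)$ your reduction is the paper's. The inner limit is $k^\gamma(\alpha_k-\beta_k)$ with $\alpha_k=v_k/(1-\gamma)^\gamma$ and $\beta_k=w_k$; your $(\acr_k-\apx_k)m_k$ is exactly $\alpha_k-\beta_k$. The outer limit exists once the constant terms of the DP and CE expansions are pinned down, and your treatment of $\gamma\le 0$ and of divergence also matches the paper.

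\emph{Step 2 fails as written.} The defining equation gives $d_k^{1/\gamma-1}v_k=1$, i.e.\ $v_k-v_{k-1}=v_k^{-\gamma/(1-\gamma)}$. Hence $v_k\sim\bigl(k/(1-\gamma)\bigr)^{1-\gamma}$, not $Ak^{\gamma}$. With exponent $\gamma$ the two sides scale as $k^{\gamma-1}$ and $k^{-\gamma^2/(1-\gamma)}$, which agree only at $\gamma=1/2$. (Also, $\acr_k\to1$ together with $\Gamma(k)/\Gamma(k+1-\gamma)\sim k^{\gamma-1}$ forces exponent $1-\gamma$.)

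Term matching also fixes only the leading and $\log k/k$ coefficients. The $1/k$ coefficient is an integration constant set by $v_1=1$, so you need a convergence argument, not matching. The paper's device is $s_k=(1-\gamma)v_k^{1/(1-\gamma)}$. For it the recursion is exactly $s_{k-1}=s_k\bigl(1-(1-\gamma)/s_k\bigr)^{1/(1-\gamma)}$, hence $s_k-s_{k-1}=1-\gamma/(2s_k)+O(s_k^{-2})$. Then $t_k:=s_k-k+\frac{\gamma}{2}\log k$ has increments $O(\log k/k^2)$ and converges.

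On the CE side, keeping the $1/k$ correction of the gamma prefactor, as you do, is necessary because it enters at the order of the constants. The paper's shortcut $\Gamma(k+1)/\Gamma(k+1+\gamma)=k^{-\gamma}(1+O(1/k))$ silently drops a term $-\frac{\gamma(1+\gamma)}{2(1-\gamma)}k^{-\gamma}$ from $\beta_k$, though this only shifts the constant.

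\emph{Strict positivity is not automatic.} In your performance-difference decomposition every stage gap is nonnegative. That yields only $c_\gamma\ge0$, which already follows from $\vdp\ge\vce$. You need a lower bound of order $k^{-\gamma}U(n)$, with $U(n)=F^{\leftarrow}(1-1/n)$, whose constant does not degrade as $k\to\infty$. One way to get it:
\begin{enumerate}
\item Retain only the stages in which CE has a single unit left. There the DP threshold is $\vdp(t-1,1)\sim(1-\gamma)^{-\gamma}U(t)$ while CE uses $U(t)$. The relative mismatch is bounded away from zero, so each such stage costs, for large $t$, at least a constant multiple of $U(t)/t$.
\item CE's acceptance times form a uniformly random $k$-subset. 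So it enters this phase at a remaining time $T$ with $T/n$ asymptotically $\mathrm{Beta}(2,k-1)$ (for $k\ge2$), and it is still unfilled at $t\le T$ with probability $t/T$.
\item Summing gives a retained loss $\gtrsim\E[U(T)]$. Fatou's lemma, together with $k^{\gamma}\E[B^\gamma]\to\Gamma(2+\gamma)$ for $B\sim\mathrm{Beta}(2,k-1)$, then gives a bound $\gtrsim k^{-\gamma}U(n)$ uniformly in $k$.
\end{enumerate}
The paper itself does not prove $c_\gamma>0$. It obtains $c_\gamma\ge0$ from optimality and infers strict positivity numerically, so completing this bound would close a gap in the paper rather than reproduce its argument.
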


\begin{proposition}
    Given any $\gamma \in (0,1),$ there exists a probability distribution $F$ satisfying the extreme value condition with index $\gamma$ and for which  $\vdp (n,k) - \vce (n,k) = \Omega((n/k)^{\gamma})$  whenever $k = o(n)$ as $n \rightarrow \infty.$
    \label{prop:regret-ce-dp-pareto}
\end{proposition}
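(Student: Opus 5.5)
A natural plan is to use a pure Pareto distribution, $\bar F(x) = x^{-1/\gamma}$ for $x \ge 1$. It lies in $\mathcal{D}_\gamma$ and has $F^{\leftarrow}(1-p) = p^{-\gamma}$ exactly, so every threshold and conditional mean is explicit. In particular $\E[X \mid X \ge \tau] = \tau/(1-\gamma)$, so the one-step payoff of any quantile threshold $p^{-\gamma}$ is $p \cdot p^{-\gamma}/(1-\gamma)$. The two recursions \eqref{eq:dp} and \eqref{eq:ce} then become recursions in acceptance probabilities only, with no second-order EVT remainder terms. This is why an explicit family should yield a non-asymptotic $\Omega$ statement, rather than only the double limit of Theorem~\ref{thm:regret-ce-dp}.

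\textbf{Step 1: scaling and a closed form for CE.} I would first show that the CE value satisfies, uniformly in $j\le k$ and $t\ge k$,
\[
\vce(t,j) = t^{\gamma}\, \Big( \psi^{\tt ce}_j + O\big(j\,(j/t)^{1-\gamma}\big)\Big).
\]
Here $\psi^{\tt ce}_j$ is the continuum value of the CE rule, obtained by the time change $s = $ remaining fraction of arrivals. The point is that under CE the number of remaining units $j$ decreases as a pure-death process whose acceptance rates are exactly $j/t$. So $\vce(n,k) = \E\big[\sum_{\text{acceptances}} (t_i/j_i)^{\gamma}/(1-\gamma)\big]$, and this can be computed exactly via the known law of the acceptance epochs: with rates $j/t$, the remaining-unit process is a uniform random subset. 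This is the mechanism behind the Gamma-ratio sum in Theorem~\ref{thm:acr-ce}.

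\textbf{Step 2: a better policy for a lower bound on DP.} Since $\vdp \ge$ the value of any policy, it suffices to exhibit one explicit policy $\pi$ with
\[
V^{\pi}(n,k) - \vce(n,k) \ge c\, n^{\gamma}k^{-\gamma}.
\]
I would take $\pi$ to be CE for all but the last unit, which has the same dynamics and value as CE until only one unit remains. For the final unit, $\pi$ switches to the single-unit optimal (Kennedy–Kertz) rule on the remaining horizon $t$, in place of CE's constant threshold $t^{\gamma}$.

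The gain is computed as follows. For $k=1$ and horizon $t$, the optimal value is $t^{\gamma}\,(1-\gamma)^{-\gamma}\cdot$(const), while CE's value is a strictly smaller constant times $t^{\gamma}$. This gap is strict because the ratios $\acr_1 > 1/2 \approx \apx_1$ differ for every $\gamma\in(0,1)$, by the explicit formulas in Theorems~\ref{thm:acr-dp} and \ref{thm:acr-ce}. So the gain is $\delta_\gamma\, \E[T^{\gamma}]$, where $T$ is the number of remaining arrivals when the CE process first has one unit left and $\delta_\gamma>0$.

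\textbf{Step 3: bounding $\E[T^\gamma]$.} Under CE, the last remaining unit's epoch $T$ is distributed like the smallest of $k$ "uniform" points scaled by $n$. By exchangeability, $T/n$ has approximately the law of the minimum of $k$ uniforms, so $\E[T^{\gamma}] \asymp n^{\gamma}\,\E[U_{1:k}^{\gamma}] \asymp n^{\gamma}k^{-\gamma}$. This gives the lower bound $\Omega((n/k)^{\gamma})$ whenever $k=o(n)$, where the condition $k=o(n)$ makes the discrete corrections $O(k/n)$ negligible.

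\textbf{Main obstacle.} The hardest step will be justifying the discrete-to-continuum comparison in Step 2 uniformly for $T$ that may be small. When $T$ is of order $1$, the asymptotic single-unit gap need not hold. I would handle this by restricting to the event $\{T \ge \eta n/k\}$, which has probability bounded below, and by using that on this event $T\to\infty$ (since $n/k\to\infty$). Then the single-unit comparison $\vdp(t,1)-\vce(t,1) \ge \tfrac{\delta_\gamma}{2} t^{\gamma}$ for $t\ge t_0$ applies. This comparison follows from Theorem~\ref{thm:acr-dp} and Theorem~\ref{thm:acr-ce} with $k=1$ together with $\mu_{t,1}\sim \Gamma(1-\gamma)t^{\gamma}$.
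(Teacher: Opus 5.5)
Your argument is correct and takes a genuinely different route from the paper. The paper also uses the Pareto law $\bar F(x)=x^{-1/\gamma}$, but then works with ratios. It asserts that, because the regular-variation limits are exact for Pareto, the fixed-$k$ expansions $\acr_k(F)=1-\frac{\gamma(1-\gamma)}{2}\frac{\log k}{k}+\frac{c^{\mathrm{dp}}_\gamma}{k}+o(1/k)$ and the CE analogue stay valid jointly along $k=o(n)$. It subtracts them to get a gap of order $1/k$ and multiplies by $\mu_{n,k}=\Theta(k^{1-\gamma}n^{\gamma})$. That route aims at the sharp constant, but the uniformity in $(n,k)$ is asserted rather than proved, and $c^{\mathrm{dp}}_\gamma\neq c^{\mathrm{ce}}_\gamma$ is only supported numerically (Figure~\ref{fig:third_order}).

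Your coupling avoids both issues. The hybrid policy coincides with CE up to the $(k-1)$-th acceptance, so by the Markov property its gain over CE is exactly $\E[\vdp(T,1)-\vce(T,1)]$, and $\vdp(n,k)$ dominates the hybrid policy. The only inputs are the single-unit asymptotics $\vdp(t,1)\sim(1-\gamma)^{-\gamma}t^{\gamma}$ and $\vce(t,1)=\frac{1}{(1-\gamma)t}\sum_{s=1}^{t}s^{\gamma}\sim t^{\gamma}/(1-\gamma^2)$, plus the uniform $k$-subset law of CE's acceptance epochs. You get only a lower bound with a non-sharp constant, but that is exactly what the proposition claims, and it covers fixed and growing $k$ at once. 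As a by-product, run your argument with $k$ fixed and $n\to\infty$, using dominated convergence and $\vdp(t,1)\le Ct^{\gamma}$. This gives $\lim_{n}\frac{\vdp(n,k)-\vce(n,k)}{n^{\gamma}}\ge\delta_\gamma\,\E[B_k^{\gamma}]$ with $B_k\sim\mathrm{Beta}(2,k-1)$. Hence $c_\gamma\ge\Gamma(2+\gamma)\,\delta_\gamma>0$ in Theorem~\ref{thm:regret-ce-dp}, which the paper only infers numerically. Your Step~1 is never used and can be dropped.

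Two small repairs are needed.

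First, your justification of $\delta_\gamma:=(1-\gamma)^{-\gamma}-(1-\gamma^2)^{-1}>0$ via ``$\acr_1>1/2\approx\apx_1$'' is not valid for a given $\gamma$. The ratio $\apx_1=1/\bigl((1+\gamma)\Gamma(2-\gamma)\bigr)$ is near $1/2$ only as $\gamma\uparrow 1$, and both ratios tend to $1$ as $\gamma\downarrow 0$. Instead, $\delta_\gamma>0$ is equivalent to $h(\gamma):=(1-\gamma)\log(1-\gamma)+\log(1+\gamma)>0$. This follows from $h(0)=h'(0)=0$ and $h''(\gamma)=\frac{1}{1-\gamma}-\frac{1}{(1+\gamma)^2}>0$ on $(0,1)$. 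Since $h(\gamma)\approx\gamma^3/2$ near $0$, the gap is genuinely tiny there.

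Second, when exactly one unit remains, one accepted epoch still lies ahead. So $T/n$ behaves like the second smallest (not the smallest) of $k$ uniforms, i.e.\ like $B_k$ above. This only changes constants. Your event bound survives because $\{T\ge m\}$ contains the event that no acceptance falls among the last $m$ arrivals. That event has probability $\binom{n-m}{k}/\binom{n}{k}\ge 1-\frac{km}{n-k+1}$, which stays bounded below for $m=\lceil \eta n/k\rceil$, small $\eta$, and $k=o(n)$.
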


In contrast to Theorem \ref{thm:regret-ce-dp}, Proposition \ref{prop:regret-ce-dp-pareto} identifies large regret even if $k$ is taken to grow with $n,$ as $n \rightarrow \infty.$ The rate of regret growth, in particular, can be significantly larger than the optimal $\Theta(\log n)$ rate identified under the fluid-scaling assumption, see eg., \cite{bray_LB, balseiro_2023} and references therein.   The divergent regret reported in Theorem \ref{thm:regret-ce-dp} and Proposition   \ref{prop:regret-ce-dp-pareto} contrasts starkly with the uniformly bounded regret in the case of finitely supported reward distribution \cite{arlotto2019}. In addition, it  underscores the sensitivity in viewing the CE heuristic’s performance under the commonly adopted, though subjective, fluid scaling assumption.

\section{Numerical Evaluation of the performance guarantees}
\label{sec:num-exp}

This section illustrates how the asymptotic competitive ratios depend on the budget level $k$ and the extreme-value
index $\gamma$. Throughout, we focus on $F$ belonging to the Fr\'echet domain of attraction, a nomenclature corresponding to the extreme value index $\gamma > 0.$ In particular, we study the approximation guarantees for $\gamma\in(0,1),$  the values for which the  performance loss in Theorems \ref{thm:acr-dp}-\ref{thm:acr-ce} are 
nontrivial. For each pair $(k,\gamma)$ we evaluate the closed-form expressions in
Theorem~\ref{thm:acr-dp} (DP) and Theorem~\ref{thm:acr-ce} (CE), and visualize the resulting ratios in
Figure~\ref{fig:heatmaps}. We also compute the worst-case ratios over $\gamma\in(0,1)$ and report them in
Table~\ref{tab:worst_case_ratios}.

\subsection{Numerical evaluation of $\acr_k(F)$ and $\apx_k(F)$ over $(k,\gamma)$}\label{subsec:heatmaps}

Figure~\ref{fig:heatmaps} displays heatmaps of the asymptotic competitive ratios $\acr_k(F)$ (DP vs.\ oracle) and
$\apx_k(F)$ (CE vs.\ oracle) as functions of $k$ and $\gamma$. Two qualitative patterns are immediate from Figure~\ref{fig:heatmaps}. 

First, for both DP and CE, performance improves rapidly with the budget level $k$.
In particular, the ratios approach $1$ quickly as $k$ increases, consistent with the large-$k$ expansion
\[
\acr_k(F),\ \apx_k(F)
=
1-\frac{\gamma(1-\gamma)}{2}\frac{\log k}{k}
+O\!\left(\frac{1}{k}\right)
\qquad (k\to\infty),
\]
proved in Propositions~\ref{prop:dp-large-k} and~\ref{prop:ce-large-k}. Visually, the heatmaps become nearly
uniformly close to $1$ once $k$ is moderate (e.g., $k\gtrsim 20$).

Second, the deterioration is concentrated in the heavy-tail boundary $\gamma\uparrow 1$ and for very small budgets.
This is particularly pronounced for the CE heuristic: for small $k$, its worst performance occurs extremely close
to $\gamma=1$, whereas the DP remains comparatively robust.
\begin{figure}[htbp]
  \centering
  \begin{subfigure}[b]{0.48\textwidth}
    \centering
    \includegraphics[height=5cm, keepaspectratio]{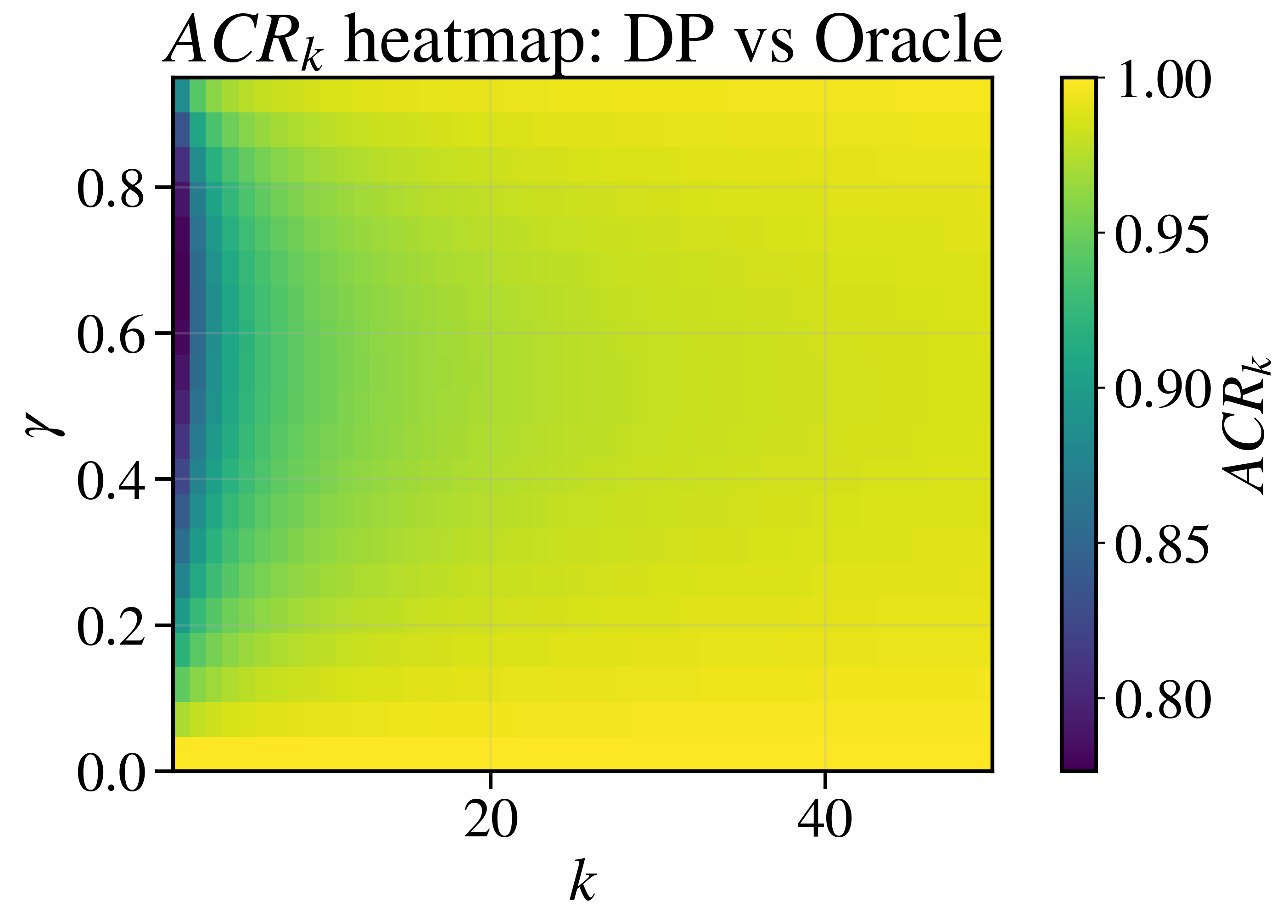}
    \caption{DP vs Oracle}
  \end{subfigure}
  \hfill
  \begin{subfigure}[b]{0.48\textwidth}
    \centering
    \includegraphics[height=5cm, keepaspectratio]{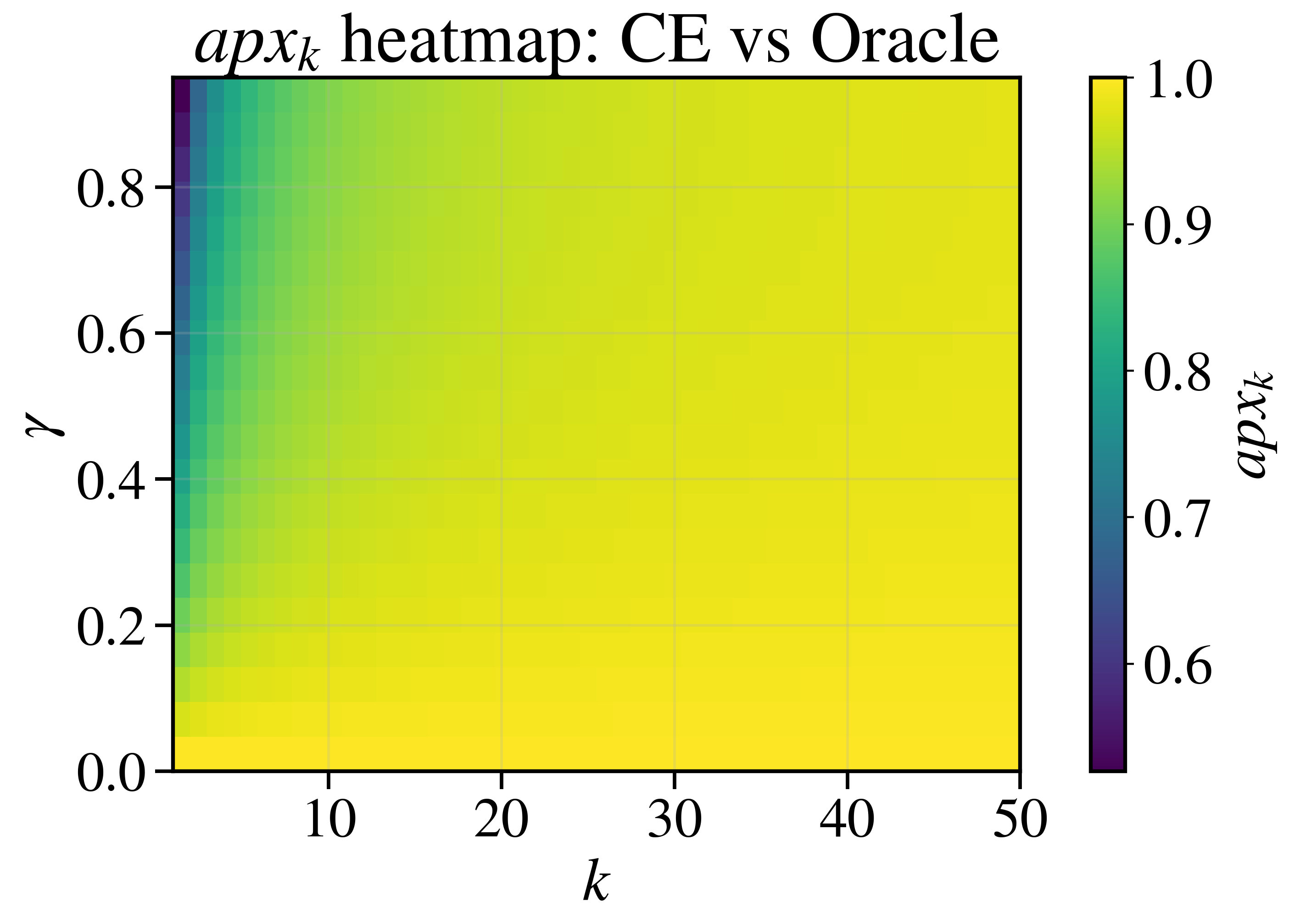}
    \caption{CE vs Oracle}
  \end{subfigure}
  \captionsetup{font=large}
  \caption{Heatmaps of the asymptotic ratios as a function of $k$ and $\gamma$.}
  \label{fig:heatmaps}
\end{figure}

\subsection{Worst-case ratios of $\acr_k(F)$ and $\apx_k(F)$ over $\gamma$}\label{subsec:worstcase}

To quantify the worst-case behavior, define
\[
\underline{\acr}^{\mathrm{DP}}_k:=\inf_{\gamma\in(0,1)}\acr_k(F),\qquad
\underline{\acr}^{\mathrm{CE}}_k:=\inf_{\gamma\in(0,1)}\apx_k(F),\qquad
\underline{\rho}_k:=\inf_{\gamma\in(0,1)}\frac{\apx_k(F)}{\acr_k(F)},
\]
and let $\gamma^\ast_{\mathrm{DP}},\gamma^\ast_{\mathrm{CE}},\gamma^\ast_{\mathrm{CE/DP}}$ denote points attaining these infima on the numerical grid.

Table~\ref{tab:worst_case_ratios} shows that $\underline{\acr}^{\mathrm{DP}}_k$ and $\underline{\acr}^{\mathrm{CE}}_k$
increase monotonically with $k$. For example, when $k=1$ the worst-case ratios are
\[
\underline{\acr}^{\mathrm{DP}}_1\simeq 0.776,\qquad
\underline{\acr}^{\mathrm{CE}}_1\simeq 0.501,
\]
while by $k=10$ they improve to
\[
\underline{\acr}^{\mathrm{DP}}_{10}\simeq 0.953,\qquad
\underline{\acr}^{\mathrm{CE}}_{10}\simeq 0.909,
\]
and by $k=200$ both are essentially indistinguishable from $1$
($\underline{\acr}^{\mathrm{DP}}_{200}\simeq 0.996$, $\underline{\acr}^{\mathrm{CE}}_{200}\simeq 0.995$).

The table also highlights a structural difference in where the worst case occurs.
For DP, the minimizing $\gamma^\ast_{\mathrm{DP}}$ lies in an intermediate range and drifts slowly downward with $k$
(e.g., from about $0.685$ at $k=1$ to about $0.525$ at $k=200$). This aligns with the fact that the leading
large-$k$ correction involves $\gamma(1-\gamma)$, whose maximum is attained at $\gamma=1/2$.
In contrast, for CE the worst case is attained near the heavy-tail boundary for small budgets
($\gamma^\ast_{\mathrm{CE}}\approx 0.999$ for $k\le 20$), and then shifts away from $1$ as $k$ grows
(e.g., $\gamma^\ast_{\mathrm{CE}}\approx 0.935$ at $k=50$ and $\gamma^\ast_{\mathrm{CE}}\approx 0.765$ at $k=200$).
This is consistent with CE being most sensitive to extremely heavy tails when the budget is very small.

Finally, the CE-to-DP ratio $\underline{\rho}_k$ confirms that the additional loss of CE relative to DP is mainly a
small-$k$ phenomenon. The worst-case $\mathrm{CE}/\mathrm{DP}$ ratio is about $0.504$ at $k=1$, but increases
rapidly with $k$ (e.g., $\underline{\rho}_{10}\simeq 0.909$ and $\underline{\rho}_{50}\simeq 0.980$),
indicating that CE becomes nearly as good as DP once the budget is moderate. Moreover,
$\gamma^\ast_{\mathrm{CE/DP}}$ is essentially at the boundary $\gamma\approx 1$ across all reported $k$,
reinforcing that the relative gap is driven by the heaviest-tail regime.
\begin{table}[htbp]
\centering

\caption{Worst-case performance ratios and the values of $\gamma^*$ where they occur}
\label{tab:worst_case_ratios}

\adjustbox{max width=\textwidth}{
\begin{tabular}{
c|
S[table-format=1.3]
S[table-format=1.3]|
S[table-format=1.3]
S[table-format=1.3]|
S[table-format=1.3]
S[table-format=1.3]
}
\toprule
$k$
& \multicolumn{1}{c}{Worst $\acr_k$ (DP/Prophet)}
& \multicolumn{1}{c}{$\gamma^*_{\text{DP}}$}
& \multicolumn{1}{c}{Worst $\apx_k$ (CE / Prophet)}
& \multicolumn{1}{c}{$\gamma^*_{\text{CE}}$}
& \multicolumn{1}{c}{Worst (CE / DP)}
& \multicolumn{1}{c}{$\gamma^*_{\text{CE/DP}}$} \\
\midrule
1   & 0.776 & 0.685 & 0.501 & 0.999 & 0.504 & 0.999 \\
2   & 0.853 & 0.630 & 0.667 & 0.999 & 0.668 & 0.999 \\
3   & 0.887 & 0.605 & 0.750 & 0.999 & 0.751 & 0.999 \\
5   & 0.921 & 0.585 & 0.833 & 0.999 & 0.834 & 0.999 \\
10  & 0.953 & 0.560 & 0.909 & 0.999 & 0.909 & 0.999 \\
20  & 0.972 & 0.550 & 0.952 & 0.999 & 0.953 & 0.999 \\
50  & 0.987 & 0.535 & 0.980 & 0.935 & 0.980 & 0.999 \\
100 & 0.993 & 0.530 & 0.990 & 0.830 & 0.990 & 0.999 \\
200 & 0.996 & 0.525 & 0.995 & 0.765 & 0.9995 & 0.999 \\
\bottomrule
\end{tabular}}
\label{table:worstcase}
\end{table}

\subsection{Finer Regret Comparison Between Optimal Dynamic Program and the CE heuristic}\label{subsec:finer-dp-ce}

Figure~\ref{fig:heatmaps_pareto} compares the additive gap $\vdp(n,k)-\vce(n,k)$ under a
Pareto distribution with $\gamma=0.7$ and joint scaling $k(n)=\lfloor n^\alpha\rfloor$ for
$\alpha\in\{0.4,0.6,0.8\}$. The left panel plots $\vdp(n,k(n))-\vce(n,k(n))$ on a log--log
scale and shows clear divergence as $n$ grows for every $\alpha<1$, with faster growth when the budget is sparser
(smaller $\alpha$). Since Pareto tails admit the closed-form identity
$\int_t^\infty \bar F(u)\,du=\frac{\gamma}{1-\gamma}t^{1-1/\gamma}$ for $t\ge1$, both $\vdp$ and
$\vce$ are computed exactly from their recursions, so the observed divergence is not an artifact of
asymptotic approximations.

The right panel reports the scaled quantity
\[
\frac{\vdp(n,k(n))-\vce(n,k(n))}{(n/k(n))^\gamma},
\]
which increases for small $n$ and then stabilizes across all $\alpha$. This behavior supports the power-law growth
\[
\vdp(n,k)-\vce(n,k)\ =\Theta \left((n/k)^\gamma\right) \qquad (k=o(n))
\]
in heavy-tailed settings, and in particular implies that the CE heuristic's additional regret diverges whenever
$k=o(n)$. In contrast to the ratio comparisons in
Figure~\ref{fig:heatmaps}--Table~\ref{tab:worst_case_ratios}, which approach $1$ quickly with $k$, this experiment shows
that a near-optimal competitive ratio does not preclude a growing additive loss when $n$ and $k$ scale jointly, which further validates Proposition~\ref{prop:regret-ce-dp-pareto}.

\begin{figure}[htbp]
  \centering
  \begin{subfigure}[b]{0.48\textwidth}
    \centering
    \includegraphics[height=5cm, keepaspectratio]{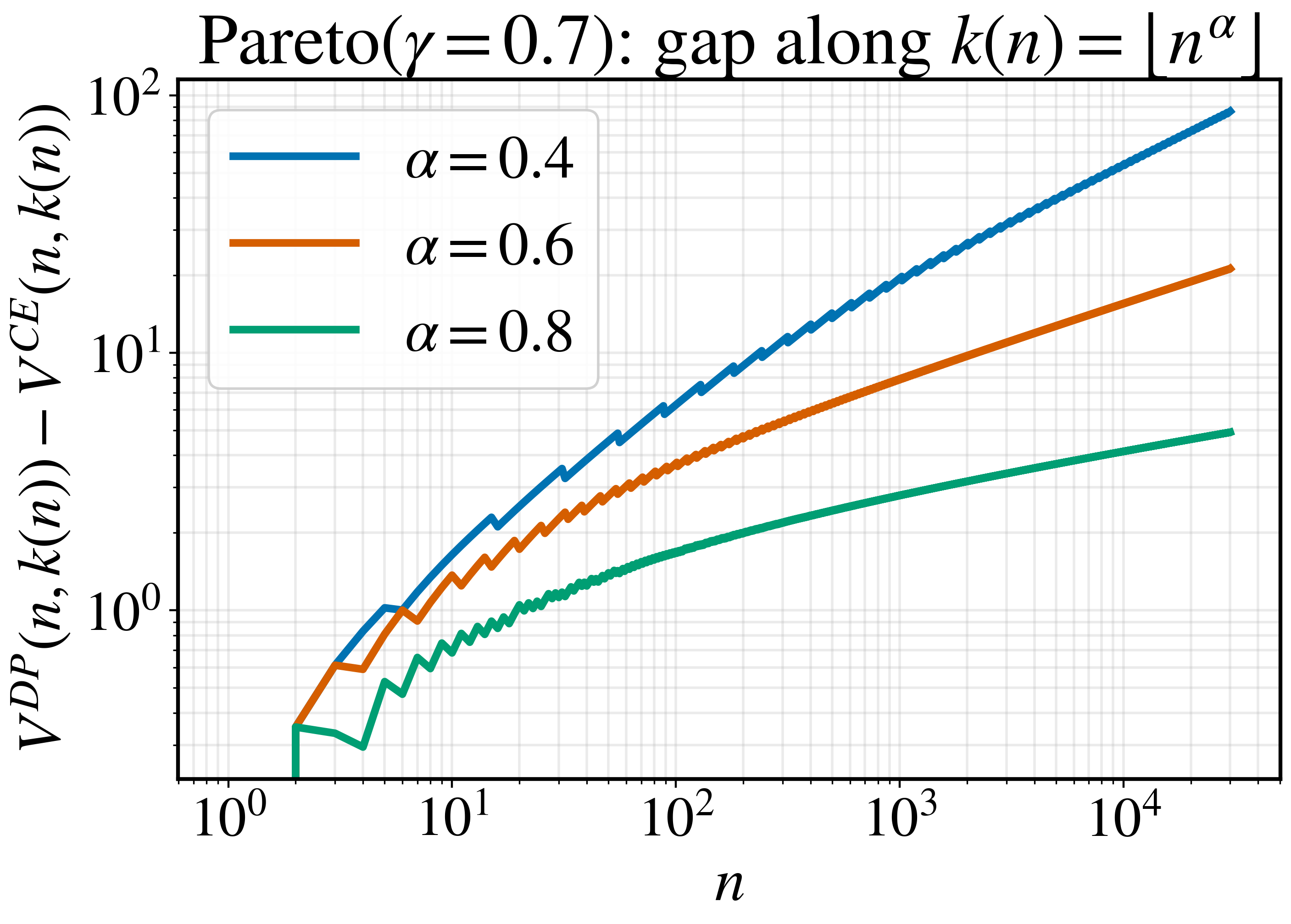}
    \caption{$\vdp-\vce$}
  \end{subfigure}
  \hfill
  \begin{subfigure}[b]{0.48\textwidth}
    \centering
    \includegraphics[height=5cm, keepaspectratio]{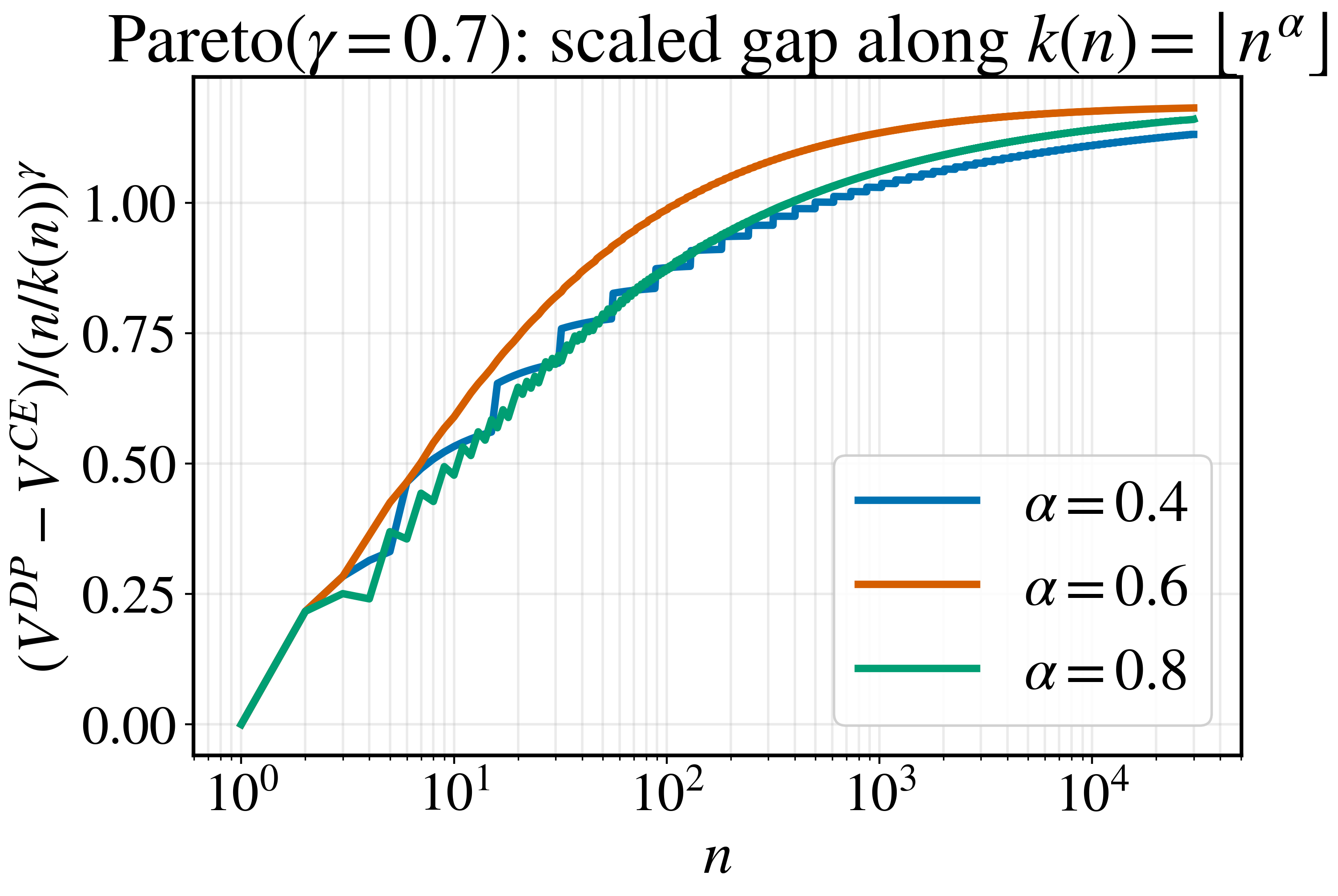}
    \caption{ ($\vdp-\vce$) / $(n/k(n))^\gamma$}
  \end{subfigure}
  \caption{Finer Comparison between DP and CE of Pareto Distribution}
  \label{fig:heatmaps_pareto}
\end{figure}

\section{Key Ingredients in the Proofs of Main Results}
\label{sec:proof}
In this section, we provide proofs for the key results, namely, characterizations of the solutions to the 2-dimensional recursions capturing the value functions $\vdp(n,k)$ and $\vce(n,k),$ for the case $\gamma > 0.$ Their application in proving the main results in the paper, and all other proofs not furnished here are presented in the appendix. 

When $F$ satisfies extreme value condition with $\gamma \neq 0$, the rate at which the distribution tail $\bar{F}(x) = 1 - F(x)$ decays to zero is characterized in terms of a broad class of functions known as \textit{regularly varying functions}. Given their central role in our analysis, we recall their definition in Definition~\ref{defn:RV} below.

\begin{definition}[Regular variation, see \cite{feller_1971} and \cite{Bingham_Goldie_Teugels_1987}]
\label{defn:RV}
A measurable function \(f:(0,\infty)\to(0,\infty)\) is said to be
regularly varying with index \(\gamma\in\mathbb R\) if, for every
\(c>0\),
\[
    \lim_{t\to\infty}\frac{f(ct)}{f(t)}=c^\gamma.
\]
We denote this by \(f\in \mathrm{\mathcal{RV}}_\gamma\).
\end{definition}

\subsection{Optimal Value Function Characterization}
\begin{lemma}
    For integers $n > 1$ and $k \ge 1$, the function $\vdp(n,k)$ satisfies
    \[
    \vdp(n,k)
    =
    \vdp(n-1,k)
    +
    \int_{\tau_{n,k}}^{x^*}
    \bar F(u)du,
    \]
    where $\tau_{n,k} := \vdp(n-1,k) - \vdp(n-1,k-1)$, and $x^*$ is the right end-point (possibly infinite) of $F$.
    \label{lem:dp-recursive}
\end{lemma}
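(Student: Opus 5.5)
We start from the DP equation \eqref{eq:dp} with $(t,j)=(n,k)$ and rewrite the objective as a function of the threshold $\tau$. Since $\mathbb{E}[X \mid X\ge\tau]P(X\ge\tau) = \mathbb{E}[X\mathbf{1}\{X\ge\tau\}]$, the quantity maximized equals
\[
\vdp(n-1,k) + \mathbb{E}\bigl[(X - \tau_{n,k})\mathbf{1}\{X\ge \tau\}\bigr],
\]
where $\tau_{n,k} = \vdp(n-1,k)-\vdp(n-1,k-1)$. This follows by grouping the terms as $\vdp(n-1,k-1)P(X\ge\tau) + \vdp(n-1,k)P(X<\tau) = \vdp(n-1,k) - \tau_{n,k}P(X\ge\tau)$. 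So the problem reduces to maximizing $g(\tau):=\mathbb{E}[(X-\tau_{n,k})\mathbf{1}\{X\ge\tau\}]$ over $\tau\ge 0$.

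Next we show that the maximum of $g$ is $\mathbb{E}[(X-\tau_{n,k})^+]$. Pointwise, $(X-\tau_{n,k})\mathbf{1}\{X\ge\tau\}\le (X-\tau_{n,k})^+$, so $g(\tau)\le \mathbb{E}[(X-\tau_{n,k})^+]$ for every $\tau$. Equality holds at $\tau=\tau_{n,k}$, provided $\tau_{n,k}\ge 0$ so that this threshold is admissible. If $\tau_{n,k}<0$, we instead use $\tau=0$: since $X\ge0$, $g(0)=\mathbb{E}[X-\tau_{n,k}] = \mathbb{E}[(X-\tau_{n,k})^+]$. (In fact $\tau_{n,k}\ge 0$ by the monotonicity $\vdp(n-1,k)\ge \vdp(n-1,k-1)$, since having more capacity cannot hurt; this can also be noted directly.)

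Finally, for $\tau_{n,k}\ge 0$ we convert the expectation with the tail-integral formula $\mathbb{E}[(X-a)^+] = \int_a^\infty \bar F(u)\,du$, which follows from Fubini/Tonelli applied to $(X-a)^+=\int_a^\infty \mathbf{1}\{X>u\}du$. Since $\bar F(u)=0$ for $u\ge x^*$, the upper limit can be replaced by $x^*$, which gives the claimed identity.

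The only delicate point is the case where the expectation may be infinite (for example $\gamma\ge1$). There both sides equal $+\infty$ consistently, and Tonelli still applies because the integrands are nonnegative. The other item to check is nonnegativity of $\tau_{n,k}$, which I would justify by the elementary monotonicity-in-capacity argument: any policy for $k-1$ units is feasible for $k$ units. Beyond that, the argument is a direct computation.
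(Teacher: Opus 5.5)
Your proof is correct and follows essentially the same route as the paper, which also identifies the optimal threshold as $\tau_{n,k}$, writes $\vdp(n,k)=\vdp(n-1,k)+\mathbb{E}[(X_1-\tau_{n,k})_+]$, and applies the tail-integral identity; you add a derivation of the optimal threshold from the max-over-$\tau$ form of \eqref{eq:dp} via the pointwise bound, and you verify $\tau_{n,k}\ge 0$. One small quibble is that your side remark on infinite means is not right as stated: if $\mathbb{E}[X]=\infty$, then $\tau_{n,k}=\infty-\infty$ is undefined for $k\ge 2$, so the lemma should be read under $\mathbb{E}[X]<\infty$, as the paper implicitly assumes.
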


\begin{proposition}
    Let $F$ be a distribution over $\R^+$ that satisfies the extreme value condition with $\gamma \in (0,1).$ Then for any $k \geq 1,$ the optimal value function $\vdp(n,k)$ satisfies 
    \begin{align*}
        \vdp(n,k) = \frac{v_k}{(1-\gamma)^{\gamma}} F^{\leftarrow}\left(1- \frac{1}{n} \right) [1 + o(1)],
    \end{align*}
    as $n \rightarrow \infty.$ Here $\{v_k\}_{k \geq 1}$ is obtained recursively as specified in the statement of Theorem \ref{thm:acr-dp}.
    \label{prop:dp-value-gamma-ge-0}
\end{proposition}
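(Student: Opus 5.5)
The plan is to argue by induction on $k$. At each step, Lemma \ref{lem:dp-recursive} turns the two-dimensional recursion into a one-dimensional difference equation in $n$. Along the sequence $b(n) := F^{\leftarrow}(1-1/n)$ this equation behaves like an autonomous ODE in the time variable $\log n$, and its unique stable fixed point gives the constant $v_k(1-\gamma)^{-\gamma}$.

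\emph{Regular-variation facts.} Since $\gamma\in(0,1)$, the extreme value condition gives $\bar F\in\mathcal{RV}_{-1/\gamma}$ and $b\in\mathcal{RV}_{\gamma}$, with $n\bar F(b(n))\to 1$. Because $1/\gamma>1$, Karamata's theorem yields
\[
\int_x^{\infty}\bar F(u)\,du = \frac{\gamma}{1-\gamma}\,x\bar F(x)\,[1+o(1)] \quad\text{as } x\to\infty .
\]
Consequently, for any fixed $d>0$, uniformly on compact subsets of $(0,\infty)$ (using the uniform convergence theorem and Potter bounds),
\[
g_n(d):=n\int_{d\,b(n)}^{\infty}\bar F(u)\,du\Big/ b(n)\;\longrightarrow\;\frac{\gamma}{1-\gamma}\,d^{\,1-1/\gamma}.
\]
I will also use $b(n)/b(n-1)=1+\gamma/n+o(1/n)$, or, in averaged form, $\sum_{m\le n} b(m)/m\sim b(n)/\gamma$.

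\emph{Heuristic identification of the constant.} Set $V_k:=\vdp(\cdot,k)$, $V_0\equiv 0$, and assume inductively that $V_{k-1}(n)\sim c_{k-1}b(n)$ with $c_0=0$. Put $r_n:=V_k(n)/b(n)$. Lemma \ref{lem:dp-recursive} gives
\[
r_n-r_{n-1}\;\approx\;\frac{\gamma}{n}\Big[\frac{1}{1-\gamma}\,(r_{n-1}-c_{k-1})^{1-1/\gamma}-r_{n-1}\Big].
\]
The bracket $h(r)$ is strictly decreasing on $(c_{k-1},\infty)$. It tends to $+\infty$ as $r\downarrow c_{k-1}$ and to $-\infty$ as $r\to\infty$, so it has a unique zero $c_k$. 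Writing $c_k=v_k(1-\gamma)^{-\gamma}$ and $x=v_k-v_{k-1}$, the equation $h(c_k)=0$ becomes $v_k=x^{1-1/\gamma}$. This is equivalent to $x^{1/\gamma}+v_{k-1}x^{1/\gamma-1}-1=0$, and for $k=1$ it gives $c_1=(1-\gamma)^{-\gamma}$, i.e.\ $v_1=1$. So the constants in Theorem \ref{thm:acr-dp} are exactly the fixed points.

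\emph{Rigorous steps, in order.}
\begin{enumerate}
\item \emph{A priori bounds.} Show $\limsup_n r_n<\infty$ from $V_k\le\mu_{n,k}=O(b(n))$; the top order statistics scale like $b(n)$ when $\gamma<1$.
\item \emph{Separation from $c_{k-1}$.} Show $\liminf_n r_n>c_{k-1}$. A policy that follows the optimal $(k-1)$-unit policy but additionally accepts the first value above $\delta b(n)$ gains an extra $\Theta(b(n))$. Hence $\tau_{n,k}=V_k(n-1)-V_{k-1}(n-1)$ is eventually between $a\,b(n)$ and $A\,b(n)$ for some $0<a<A$, and the uniform Karamata limit $g_n(d)\to\frac{\gamma}{1-\gamma}d^{1-1/\gamma}$ applies to the increments.
\item \emph{Convergence of $r_n$.} Prove $r_n\to c_k$ by a Lyapunov / stochastic-approximation argument in time $\log n$. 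Fix $\varepsilon>0$. Whenever $r_{n-1}\ge c_k+\varepsilon$, the step is at most $-\eta\gamma/n$ for some $\eta>0$ and all large $n$. Since $\sum 1/n=\infty$ while individual steps are $o(1)$, $r_n$ cannot remain above $c_k+\varepsilon$, and once below it cannot jump back above by more than $o(1)$. The same argument from below gives $\limsup_n r_n\le c_k\le\liminf_n r_n$.
\end{enumerate}

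I expect step 3 to be the main obstacle. It needs the $o(1/n)$ error terms in the increment to be uniform in $r$ over the compact range secured in steps 1 and 2, and it needs the inductive error $V_{k-1}(n)/b(n)-c_{k-1}=o(1)$ to be propagated inside $h$. My first attempt would be to handle this with Potter bounds on $\bar F$ together with the continuity of $d\mapsto d^{1-1/\gamma}$ away from $0$.
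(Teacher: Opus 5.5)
\textbf{Overall.} Your architecture is the paper's: induction on $k$, Lemma~\ref{lem:dp-recursive} to get a one-dimensional recursion, Karamata plus locally uniform scaling of $nR(b(n)y)/b(n)$, a strictly decreasing drift whose unique zero is $c_k=v_k(1-\gamma)^{-\gamma}$, and a $\sum 1/n=\infty$ argument. Your identification of the constants is correct.

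\textbf{The gap: the pointwise ratio expansion.} One step fails as stated. The expansion $b(n)/b(n-1)=1+\gamma/n+o(1/n)$ does not follow from $b\in\mathcal{RV}_\gamma$. Monotonicity and regular variation only give $b(n-1)/b(n)\to 1$. For example, take $P(X>m)=m^{-1/\gamma}$ for integers $m\ge 1$. Then $\bar F\in\mathcal{RV}_{-1/\gamma}$, so $F\in\mathcal{D}_\gamma$. But $b(n)=\lceil n^\gamma\rceil$ stays constant on long runs and then jumps by a relative amount of order $n^{-\gamma}\gg 1/n$. Flat stretches of a continuous $F$ produce the same effect.

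On any run where $b(n)=b(n-1)$, the recursion gives $r_n-r_{n-1}=R(\tau_{n,k})/b(n)>0$, however large $r_{n-1}$ is. So your Step~3 claim (``whenever $r_{n-1}\ge c_k+\varepsilon$, the step is at most $-\eta\gamma/n$'') is false for $r_n=V_k(n)/b(n)$.

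\textbf{The fix.} Normalize by a smooth equivalent of $b$. The paper does this via the Smooth Variation Theorem: it works with $V_j(n)/\widetilde U(n)$, where $\widetilde U\sim U$ and $n\widetilde U'(n)/\widetilde U(n)\to\gamma$. Your averaged statement already gives such a function. Set $\tilde b(n):=\gamma\sum_{m=m_0}^{n}b(m)/m$. Then $\tilde b\sim b$ by Karamata. Since $\tilde b(n)-\tilde b(n-1)=\gamma b(n)/n$ and $b(n)/\tilde b(n)\to 1$, you get $\tilde b(n-1)/\tilde b(n)=1-\gamma/n+o(1/n)$. Run Steps~2--3 on $V_k(n)/\tilde b(n)$. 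The scaling of $R$ transfers because $\tilde b\sim b$ and the convergence is locally uniform. The resulting limit is also the limit of $V_k(n)/b(n)$.

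\textbf{Step~2 differs from the paper, legitimately.} The paper gets $\liminf_n (x_{j,n}-x_{j-1,n})>0$ analytically: if the normalized gap drops below $2\delta$, the $R$-term forces an increment exceeding $1/n$. That is why it carries the extra induction hypothesis $x_{j-1,n}-x_{j-1,n-1}=O(1/n)$.

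Your policy comparison avoids that hypothesis and is cleaner, provided you specify it carefully. The extra acceptance should be the first arrival exceeding $\delta b(n)$ that the optimal $(k-1)$-unit policy \emph{rejects}. That policy accepts at most $k-1$ items, so such an arrival exists whenever $N_n:=\#\{t\le n: X_t>\delta b(n)\}\ge k$. This gives $V_k(n)-V_{k-1}(n)\ge \delta b(n)\,P(N_n\ge k)$, and $P(N_n\ge k)\to P(\mathrm{Poisson}(\delta^{-1/\gamma})\ge k)>0$. Together with $\tau_{n,k}\le V_k(n-1)\le \mu_{n-1,k}=O(b(n))$, this keeps the normalized argument of $R$ in a compact subset of $(0,\infty)$, as you intended.
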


\begin{proposition}
    Let $F$ be a distribution over $\R^+$ that satisfies the extreme value condition with $\gamma < 0$ at a finite end-point $x^\ast.$  Then for any $k \geq 1,$ the optimal value function $\vdp(n,k)$ satisfies 
    \begin{align*}
        \vdp(n,k) = kx^\ast - \frac{v_k}{(1-\gamma)^{\gamma}} \left\{ x^\ast -  F^{\leftarrow}\left(1- \frac{1}{n} \right) \right\}[1 + o(1)],
    \end{align*}
    as $n \rightarrow \infty.$ Here $\{v_k\}_{k \geq 1}$ is obtained recursively as specified in the statement of Theorem \ref{thm:acr-dp}.
    \label{prop:dp-value-gamma-le-0}
\end{proposition}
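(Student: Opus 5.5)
The plan is to reduce the statement to the same kind of asymptotic analysis of the one-step recursion in Lemma \ref{lem:dp-recursive} that gives Proposition \ref{prop:dp-value-gamma-ge-0}. We work with the \emph{loss relative to the end-point} and argue by induction on $k$.

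\textbf{Step 1: loss variables and their recursion.} Set $W(n,j) := jx^\ast - \vdp(n,j) \geq 0$, with $W(n,0)=0$, and $a(n) := x^\ast - F^{\leftarrow}(1-1/n)$. Since $\tau_{n,k} = x^\ast - d_{n,k}$ with $d_{n,k} := W(n-1,k)-W(n-1,k-1)$, Lemma \ref{lem:dp-recursive} becomes
\begin{align*}
W(n,k) = W(n-1,k) - \int_0^{d_{n,k}} \bar F(x^\ast - s)\,\mathrm{d}s .
\end{align*}
For $\gamma<0$ the extreme value condition is equivalent to two facts. First, $s \mapsto \bar F(x^\ast - s)$ is regularly varying at $0$ with index $-1/\gamma>0$. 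Second, $a \in \mathcal{RV}_\gamma$, so $a(n)\downarrow 0$, $a(n-1)-a(n) \sim -\gamma a(n)/n$, and $n\bar F(x^\ast - a(n)t) \to t^{-1/\gamma}$ locally uniformly in $t>0$. Karamata's theorem (uniform convergence on compacts) then gives, for $d = t\,a(n)$ with $t$ in a compact subset of $(0,\infty)$,
\begin{align*}
\int_0^{t a(n)} \bar F(x^\ast-s)\,\mathrm{d}s = \frac{a(n)}{n}\,\frac{t^{1-1/\gamma}}{1-1/\gamma}\,[1+o(1)].
\end{align*}

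\textbf{Step 2: the limiting algebraic recursion.} Make the ansatz $W(n,k)\sim w_k a(n)$ and substitute it into the recursion, using Step 1. Matching the terms of order $a(n)/n$ gives
\begin{align*}
-\gamma w_k = \frac{\gamma}{\gamma-1}(w_k-w_{k-1})^{1-1/\gamma}, \qquad w_0=0.
\end{align*}
This is a scalar equation for the increment $x_k = w_k - w_{k-1}$, with $w_{k-1}$ known. After rescaling $w_k = c\,\tilde v_k$ with $c=(1-\gamma)^{-\gamma}$, it should become $x^{1/\gamma}+ \tilde v_{k-1}x^{1/\gamma-1}-1=0$; I will check this exponent bookkeeping. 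The unique positive root gives $\tilde v_k = v_k$ and hence the constant $v_k/(1-\gamma)^{\gamma}$. Uniqueness follows because, for $\gamma<0$, the left side is strictly monotone in $x>0$ with limits of opposite sign. This step mirrors the $\gamma\in(0,1)$ computation in Proposition \ref{prop:dp-value-gamma-ge-0}, with the roles of $\bar F$ near $\infty$ and near $x^\ast$ exchanged.

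\textbf{Step 3: rigorous convergence (main obstacle).} We must show $W(n,k)/a(n)\to w_k$, assuming by induction that $W(n,k-1)/a(n)\to w_{k-1}$. For the base case, $W(n,0)=0$, so $d_{n,1} = W(n-1,1)$.

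The first task is to show $L=\limsup_n W(n,k)/a(n)$ and $\ell=\liminf_n W(n,k)/a(n)$ lie in $(0,\infty)$. An upper bound comes from comparing with a fixed-threshold policy at level $F^{\leftarrow}(1-C/n)$. A positive lower bound comes from the prophet bound $W(n,k) \geq \E\big[\sum_{j<k}(x^\ast - X_{n-j:n})\big]$.

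Next comes a sub/super-solution argument. The map $w\mapsto \gamma w + \frac{\gamma}{1-\gamma}(w-w_{k-1})^{1-1/\gamma}$ has a unique positive zero at $w_k$ and changes sign there.
\begin{itemize}
\item If $W(m,k)\ge (w_k+\delta)a(m)$ along a stretch of indices, the per-step decrement $\int_0^{d}\bar F$ exceeds the decrease of $(w_k+\delta)a(\cdot)$ by a factor bounded away from one.
\item Summing over blocks $m\in[n,\lambda n]$ and using $a(\lambda n)/a(n)\to\lambda^\gamma$ then contradicts $L > w_k$.
\item Symmetrically, this rules out $\ell<w_k$.
\end{itemize}
The technical care lies in keeping the Karamata approximation uniform over a range of ratios $t$, which we get by first confining $d_{n,k}/a(n)$ to a compact set using the a priori bounds. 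We also need to use monotonicity of $\vdp(n,k)$ in $n$ to pass from block sums to pointwise ratios. Finally, $W(n,k)\sim w_k a(n)$ rearranges to the displayed expansion, since $x^\ast - F^{\leftarrow}(1-1/n) = a(n)$.
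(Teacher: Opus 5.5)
Your overall architecture is the paper's. You pass to the deficiencies $W(n,j)=jx^\ast-\vdp(n,j)$, rewrite Lemma~\ref{lem:dp-recursive} with $\widetilde R(s)=\int_0^s\bar F(x^\ast-u)\,du$, use Karamata's theorem at zero for the drift, identify the limit through $h_k=\frac{1}{1-\gamma}(h_k-h_{k-1})^{-(1-\gamma)/\gamma}$, and rescale by $(1-\gamma)^{-\gamma}$; your exponent bookkeeping does check out. The step that fails is the a priori upper bound $L=\limsup_n W(n,k)/a(n)<\infty$, which your Step~3 needs in order to apply the Karamata asymptotics uniformly at the actual gap. A fixed-threshold policy at level $F^{\leftarrow}(1-C/n)$ cannot supply it. If $N$ is the number of arrivals above the threshold, that policy's deficiency is at least $x^\ast\,\E[(k-N)^+]\ge kx^\ast(1-C/n)^n\to kx^\ast e^{-C}$, a constant, whereas $a(n)\to0$. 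Letting $C=C_n\to\infty$ fast enough to make this $o(a(n))$ forces $C_n\gtrsim-\gamma\log n$. Then every accepted reward sits about $a(n/C_n)\approx C_n^{-\gamma}a(n)\gg a(n)$ below $x^\ast$. (For the uniform distribution and $k=1$, the best fixed threshold loses order $\log n/n$, while the DP loses $2/n$.) So comparing $\vdp$ with a fixed-threshold value yields nothing at scale $a(n)$. Nor can the bound be skipped. Evaluating $\widetilde R$ by monotonicity at a fixed multiple of $a(m)$ only gives a normalized decrement of order $1/m$. That cannot offset the growth of $r=W/a$ caused by the decay of $a$ (of order $-\gamma r/m$ per step on average) once $r$ is large. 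The paper's Remark~\ref{rem:weibull_counterpart} is silent here too. In Lemma~\ref{lem:dp-asymptotic-recursion} the hypothesis $V_j(n)=O(U(n))$ comes free from the prophet bound, but its Weibull translation $D_j(n)=O(a(n))$ is an upper bound on the deficiency, which the prophet cannot give.

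Either of two repairs works. The first is an adaptive threshold. For a single unit, when $m$ arrivals remain, accept iff $X>F^{\leftarrow}(1-c/m)$ with a fixed $c>-\gamma$. The probability of still holding the unit with $m$ arrivals left is roughly $(m/n)^c$, so by Potter's bounds the expected deficiency is $\lesssim\sum_m\frac{c}{m}(m/n)^c a(m/c)+O(n^{-c})=O(a(n))$. Running this on $k$ disjoint blocks of length $\lfloor n/k\rfloor$ gives $W(n,k)=O(a(n/k))=O(a(n))$. The CE heuristic is the case $c=1$, matching the restriction $\gamma>-1$ in Proposition~\ref{prop:ce-value-gamma-le-0}. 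The second repair gets the bound from the recursion itself. A step with gap $d\ge s_0$ lowers the nonnegative, nonincreasing sequence $W(\cdot,k)$ by at least $\widetilde R(s_0)>0$, so it happens only finitely often. For small gaps, $\widetilde R$ is regularly varying at zero with index $1-1/\gamma>1$, so Potter's bounds make the normalized decrement at least of order $r^{1-1/\gamma-\epsilon}/m$, which beats $-\gamma r/m$ for large $r$.

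There are two smaller points. First, $a(n-1)-a(n)\sim-\gamma a(n)/n$ does not follow from $a\in\mathcal{RV}_\gamma$, since $a$ may be locally step-like when $F$ has atoms or gaps near $x^\ast$. The paper handles this with a smoothly varying $\widetilde a\sim a$. Your block comparison, using only $a(\lambda n)/a(n)\to\lambda^\gamma$ and the monotonicity of $W(\cdot,k)$, is a legitimate alternative, provided the per-step phrasing of your first bullet is replaced by block sums. Second, the map in Step~3 has a sign slip: the drift of $r$ is $-\gamma r+\frac{\gamma}{1-\gamma}(r-w_{k-1})^{1-1/\gamma}$. This is concave rather than monotone, and for $k=1$ it also vanishes at $r=0$. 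Your prophet lower bound $\ell>0$ is exactly what excludes that unstable equilibrium, a point the paper's remark also passes over.
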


\begin{proposition}
    Let $F$ be a distribution over $\R^+$ that satisfies the extreme value condition with $\gamma = 0.$  Then for any $k \geq 1,$ the optimal value function $\vdp(n,k)$ satisfies 
    \begin{align*}
        \vdp(n,k) = k  F^{\leftarrow}\left(1- \frac{1}{n} \right)    [1 + o(1)], \quad \text{ as } n \rightarrow \infty.
    \end{align*}
    \label{prop:dp-value-gamma-eq-0}
\end{proposition}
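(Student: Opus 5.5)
We prove Proposition~\ref{prop:dp-value-gamma-eq-0} by induction on $k$, using the recursion of Lemma~\ref{lem:dp-recursive}. Throughout, write $b(n):=F^{\leftarrow}(1-1/n)$. Since $\gamma=0$, the auxiliary function $a(t):=\int_{U(t)}^{x^*}\bar F(u)\,du\cdot t$ (the mean excess function at level $U(t)$) satisfies $a(t)=o(U(t))$, and more precisely $a(t)/U(t)\to0$ as $t\to\infty$, where $U(t)=F^{\leftarrow}(1-1/t)$. Here we use that $F$ is supported on $\R^+$, so $U(t)\to x^*>0$. In the Gumbel domain we also have $U$ slowly varying in the extended sense: $U(ct)-U(t)=a(t)\log c\,(1+o(1))$ locally uniformly in $c$. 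Consequently $U(ct)/U(t)\to1$ uniformly on compact $c$-sets, and even for $c$ ranging over $[1/\log^m t,\log^m t]$ by Potter-type bounds for $\Pi$-variation.

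\textbf{Upper bound.} The prophet bound gives $\vdp(n,k)\le \mu_{n,k}$. Each $\E[X_{n-j:n}]=b(n)(1+o(1))$ for fixed $j$ in the Gumbel domain with $b(n)>0$, because $(X_{n-j:n}-b(n))/a(n)$ converges with converging means and $a(n)=o(b(n))$. Hence $\vdp(n,k)\le k\,b(n)(1+o(1))$. This is the easier step.

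\textbf{Lower bound.} Instead of analyzing the DP thresholds $\tau_{n,k}$ exactly, we compare with a feasible policy: split the horizon into $k$ consecutive blocks of length $\lfloor n/k\rfloor$. In each block, run a single-unit threshold rule accepting the first arrival exceeding $U(m/\log m)$ for a block length $m$, with a fallback that at most one unit is used per block. Its expected reward per block is at least
\[
P(\text{some arrival exceeds } U(m/\log m))\cdot U(m/\log m)\ \ge\ (1-m^{-1}(\ldots))\,U(m/\log m),
\]
which is $\bigl(1-(1-\log m/m)^m\bigr)U(m/\log m)=(1-o(1))U(m/\log m)$. By the extended slow variation of $U$, we get $U(m/\log m)=U(n)-a(n)O(\log(k\log n))=b(n)(1+o(1))$, since $a(n)\log\log n=o(b(n))$. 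To justify the last step I would use the representation $U(t)=U(1)+\int_1^t a(s)/s\,ds$ with $a(s)/U(s)\to0$; this yields $U(t)-U(t/\log t)\le \sup_{s\ge t/\log t}(a(s)/U(s))\,U(t)\log\log t$, which is not automatically $o(U(t))$.

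\textbf{Main obstacle.} The main obstacle is exactly this $\log\log$ factor. To avoid it, I would choose the block threshold as $U(m/c)$ with a fixed large constant $c$ instead of $U(m/\log m)$. The acceptance probability per block is then $1-e^{-c}+o(1)$, and $U(m/c)=U(n)(1+o(1))$ since $m/c=n/(kc)$ is a fixed fraction of $n$ and $U$ is slowly varying in the ratio sense. This gives
\[
\liminf_n \frac{\vdp(n,k)}{k\,b(n)}\ \ge\ 1-e^{-c}.
\]
Letting $c\to\infty$ gives a liminf of $1$, and combining with the upper bound proves the claim. The only genuine ingredient is $U(\lambda t)/U(t)\to1$ for fixed $\lambda>0$. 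This follows from $U(\lambda t)-U(t)\sim a(t)\log\lambda$ together with $a(t)=o(U(t))$, which holds for $\gamma=0$ and nonnegative $F$. Both facts come from standard Gumbel-domain theory (de Haan), which I would cite rather than reprove.
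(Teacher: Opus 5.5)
Your proof is correct, and its skeleton is the paper's. You squeeze $\vdp(n,k)$ between two quantities. The upper one is the prophet value $\mu_{n,k}=kU(n)[1+o(1)]$, from Theorem~\ref{thm:prophet} together with Remark~\ref{rem:a_n_and_u_n}; this is exactly your upper bound. The lower one is the value of a simple feasible policy. Neither argument analyzes the DP thresholds $\tau_{n,k}$; both need only $U(\lambda n)/U(n)\to1$ for fixed $\lambda>0$.

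Where you differ is the feasible policy. The paper does not construct one. It uses $\vce(n,k)\le\vdp(n,k)$ and Proposition~\ref{prop:ce-value-gamma-eq-0}, whose proof exploits that the CE acceptance times form a uniformly random $k$-subset of the $n$ periods. On the event that all $k$ acceptances precede the last $\lfloor\varepsilon n\rfloor$ arrivals, which has probability tending to $(1-\varepsilon)^k$, every threshold is at least $U(\varepsilon n/k)$. Hence $\liminf_n \vce(n,k)/U(n)\ge k(1-\varepsilon)^k$. Your $k$ blocks with the fixed threshold $U(m/c)$ give the analogous bound $k(1-e^{-c})$ directly, without any combinatorics.

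Each route has its merit. The paper's is economical in context, since the CE asymptotics are needed anyway for Theorem~\ref{thm:acr-ce}, and it shows in passing that CE is first-order optimal when $\gamma=0$. Yours is self-contained and needs no continuity of $F$, provided you accept on $\{X\ge U(m/c)\}$ rather than on strict exceedance. Then you rely only on $P(X\ge U(t))\ge 1/t$, which gives per-block failure probability at most $(1-c/m)^m\le e^{-c}$. By contrast, the uniform-subset argument needs the exact acceptance probability $j/t$, which is where the paper assumes $F$ continuous.

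One correction: delete the aside in your first paragraph asserting $U(ct)/U(t)\to1$ uniformly for $c\in[1/\log^m t,\log^m t]$. It is false in general. For large $t$ take $U(t)=\exp(\log t/\log\log t)$. Then $a(t)=tU'(t)=U(t)\bigl(1/\log\log t-1/(\log\log t)^2\bigr)\in\mathcal{RV}_0$, so $U$ is $\Pi$-varying with auxiliary function $a$. The corresponding $F$ therefore lies in $\mathcal D_0$ with $a(t)=o(U(t))$, yet $U(t/\log t)/U(t)\to e^{-1}$. Your suspicion about the $\log\log$ factor was well founded. Since the final argument uses only a fixed $c$ before letting $c\to\infty$, nothing depends on the aside.
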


\begin{proof}[Proof of Proposition~\ref{prop:dp-value-gamma-ge-0}]
Fix \(k\geq 1\). Since \(\gamma\in(0,1)\), we have \(x^*=\infty\), and
Lemma~\ref{lem:dp-recursive} gives
\begin{equation}
\label{eq:dp-recursion-frechet}
\vdp(n,k)
=
\vdp(n-1,k)
+
R(\tau_{n,k}),
\qquad
R(t):=\int_t^\infty \overline F(u)\,du,
\end{equation}
where
\[
\tau_{n,k}
:=
\vdp(n-1,k)
-
\vdp(n-1,k-1).
\]

Define
\[
U(n):=F^{\leftarrow}\left(1-\frac1n\right),
\qquad
g_k(n):=\frac{\vdp(n,k)}{U(n)}.
\]
Since \(F\in\mathcal D_\gamma\) with \(\gamma\in(0,1)\), and by \cite{deHaan2006extreme} Corollary 1.2.10 and Theorem 1.2.1, respectively,
\[
U\in\mathcal{RV}_\gamma,
\qquad
\overline F\in\mathcal{RV}_{-1/\gamma}.
\]
By Karamata's theorem (see \cite{deHaan2006extreme} Theorem B.1.5 by setting $f(t)$ to be $\bar F(t)$),
\[
R(t)
\sim
\frac{\gamma}{1-\gamma}\,t\overline F(t),
\qquad t\to\infty.
\]
Since $\bar F \in \mathcal{RV}_{-1/\gamma}$,
\[
\frac{\bar F(U(n) y)}{\bar F(U(n))} \to y^{-1/\gamma},
\]
Consequently, we have
\begin{equation}
\label{eq:tail-integral-scaling}
\frac{nR(U(n)y)}{U(n)}
\longrightarrow
\frac{\gamma}{1-\gamma}y^{-(1-\gamma)/\gamma},
\end{equation}
locally uniformly for \(y\in(0,\infty)\).

We can now apply Lemma~\ref{lem:dp-asymptotic-recursion} in the appendix inductively in
\(k\) because function $R$ is nonnegative and nonincreasing. Since \(g_0(n)\equiv0\), the lemma implies that, for every fixed
\(k\geq1\),
\[
g_k(n)\longrightarrow g_k,
\]
where \(g_0=0\) and \(g_k>g_{k-1}\) is the unique solution of
\begin{equation}
\label{eq:gk-limit-recursion}
g_k
=
\frac{1}{1-\gamma}
\bigl(g_k-g_{k-1}\bigr)^{-(1-\gamma)/\gamma}.
\end{equation}

Now define
\[
v_k:=(1-\gamma)^\gamma g_k.
\]
Then \(v_0=0\), \(v_1=1\), and, setting
\[
z_k:=v_k-v_{k-1}>0,
\]
equation \eqref{eq:gk-limit-recursion} is equivalent to
\[
z_k^{1/\gamma}
+
v_{k-1}z_k^{1/\gamma-1}
-
1
=
0,
\]
which is exactly the recursion stated in Theorem~\ref{thm:acr-dp}.

Therefore,
\[
\vdp(n,k)
=
g_kU(n)[1+o(1)]
=
\frac{v_k}{(1-\gamma)^\gamma}
F^{\leftarrow}\left(1-\frac1n\right)[1+o(1)].
\]
\end{proof}

\subsection{Value Function of the CE Heuristic}

\begin{lemma}
    For integers $n > 1$ and $k \ge 1$, the function $\vce(n,k)$ satisfies
    \[
    \vce(n,k) = \frac{k}{n} \vce(n-1,k-1) + \int_{F^{\leftarrow}(1-\frac{k}{n})}^{x^*} \bar F(u) du + F^{\leftarrow}(1-\frac{k}{n}) \frac{k}{n}+(1-\frac{k}{n}) \vce(n-1,k),
    \]
    where $x^*$ is the right end-point(possibly infinite) of $F$.
    \label{lem:ce-recursive}
\end{lemma}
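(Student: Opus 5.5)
We start from the one-step recursion \eqref{eq:ce} with $t=n$, $j=k$ and threshold $\tau := \tau^{\tt CE}_{n,k} = F^{\leftarrow}(1-k/n)$. Conditioning on the first of the remaining $n$ arrivals gives
\[
\vce(n,k) = \mathbb{E}\bigl[X\,\mathbf 1\{X\ge \tau\}\bigr] + \vce(n-1,k-1)\,P(X\ge\tau) + \vce(n-1,k)\,P(X<\tau).
\]
Rewriting this in the stated form requires two ingredients.

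\textbf{First ingredient: the acceptance probability.} We need $P(X\ge \tau) = k/n$. Here we use the standing assumption of this section that $F$ is continuous. Continuity gives $F(F^{\leftarrow}(p)) = p$ for $p\in(0,1)$, and $P(X=\tau)=0$, so $P(X\ge\tau) = 1-F(\tau) = k/n$ and $P(X<\tau) = 1-k/n$. The boundary case $k=n$ gives $p=0$, so $\tau = F^{\leftarrow}(0)$, which is at most the left end-point. Then $P(X\ge\tau)=1$, since $F$ is supported on $\R^+$ and $F^{\leftarrow}(0)$ is taken as the essential infimum, and the formula still holds. I would note this convention explicitly.

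\textbf{Second ingredient: the truncated mean.} We need to identify $\mathbb{E}[X\mathbf 1\{X\ge\tau\}]$. Since $X\ge 0$ and $\tau\ge 0$, write $X\mathbf 1\{X\ge\tau\} = \tau\mathbf 1\{X\ge\tau\} + (X-\tau)^+$, using continuity to ignore the atom at $\tau$. Tonelli's theorem then gives
\[
\mathbb{E}(X-\tau)^+ = \int_\tau^{x^*} P(X>u)\,du = \int_\tau^{x^*}\bar F(u)\,du,
\]
with the integral truncated at $x^*$ because $\bar F$ vanishes beyond it. Combining, $\mathbb{E}[X\mathbf 1\{X\ge\tau\}] = \tau\, k/n + \int_{\tau}^{x^*}\bar F(u)\,du$.

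Substituting both ingredients into the recursion yields exactly the displayed identity. The only delicate point is the quantile and continuity bookkeeping at the endpoints, namely $k=n$ and the case of an infinite $\int\bar F$. For the latter, the identity holds in $[0,\infty]$, and it is finite whenever $\E X<\infty$. The rest is routine; in particular the argument mirrors the derivation of Lemma~\ref{lem:dp-recursive}, but with a fixed, non-optimized threshold.
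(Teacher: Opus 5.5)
Your proof is correct and takes the same route as the paper: condition on the first of the $n$ arrivals, use continuity of $F$ to get $P(X\ge\tau)=k/n$, and write the truncated mean as $\tau\,k/n+\int_\tau^{x^*}\bar F(u)\,du$. The differences are minor and in your favor. Your decomposition $X\mathbf{1}\{X\ge\tau\}=\tau\mathbf{1}\{X\ge\tau\}+(X-\tau)^+$ with Tonelli avoids the density and the boundary term implicit in the paper's integration by parts. You also state the $k=n$ quantile convention explicitly, which the paper glosses over. Finally, you skip the paper's Lagrangian derivation of the threshold, which only motivates the CE rule and is not needed for the recursion itself.
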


\begin{proposition}
    Let $F$ be a distribution over $\R^+$ that satisfies the extreme value condition with $\gamma \in (0,1).$ Then for any $k \geq 1,$ the CE heuristic's value function  $\vce(n,k)$ satisfies 
    \begin{align*}
        \vce(n,k) =  w_k F^{\leftarrow}\left(1- \frac{1}{n} \right) [1 + o(1)],
    \end{align*}
    as $n \rightarrow \infty.$ Here the sequence $\{ w_k\}_{k \geq 1}$ is obtained recursively from $w_1 = (1-\gamma^2)^{-1},$ and 
    \begin{align}
     w_k = \frac{k}{k+\gamma}  w_{k-1} + \frac{1}{k+\gamma} \frac{k^{1-\gamma}}{1-\gamma} \quad \text{ for any } k > 1.
    \label{eq:ck-recursion}
    \end{align}
    \label{prop:ce-value-gamma-ge-0}
\end{proposition}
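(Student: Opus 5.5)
Our plan is to run an induction on $k$, mirroring the proof of Proposition~\ref{prop:dp-value-gamma-ge-0}, but based on the linear recursion of Lemma~\ref{lem:ce-recursive} instead of the DP recursion. Write $U(n):=F^{\leftarrow}(1-1/n)$ and $h_k(n):=\vce(n,k)/U(n)$, with $h_0\equiv 0$. As in the DP proof, we use that $U\in\mathcal{RV}_\gamma$ and that Karamata's theorem gives $R(t)=\int_t^\infty\bar F(u)\,du\sim\frac{\gamma}{1-\gamma}t\bar F(t)$. Since $F^{\leftarrow}(1-k/n)=U(n/k)$ and $\bar F(U(m))\sim 1/m$, the ``immediate reward'' terms of Lemma~\ref{lem:ce-recursive} simplify:
\[
R(U(n/k))+\tfrac{k}{n}U(n/k)=\tfrac{k}{n}U(n/k)\Big[\tfrac{\gamma}{1-\gamma}+1+o(1)\Big]=\tfrac{k^{1-\gamma}}{n(1-\gamma)}U(n)[1+o(1)].
\]

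Next we divide the recursion of Lemma~\ref{lem:ce-recursive} by $U(n)$ and use $U(n-1)/U(n)=1-\gamma/n+o(1/n)$, which holds because $U$ is regularly varying. This is mildly delicate, since regular variation alone gives only $U(n-1)/U(n)\to1$. We plan to handle it by summing rather than by pointwise expansion; see the last paragraph. Formally, the recursion reads
\[
h_k(n)=\Big(1-\tfrac{k}{n}\Big)\Big(1-\tfrac{\gamma}{n}\Big)h_k(n-1)+\tfrac{k}{n}h_{k-1}(n-1)+\tfrac{k^{1-\gamma}}{n(1-\gamma)}+o(1/n).
\]
Suppose the induction hypothesis $h_{k-1}(n)\to w_{k-1}$ holds. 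Then this is a first-order linear recursion of the form $h(n)=(1-a/n+o(1/n))h(n-1)+b/n+o(1/n)$, with $a=k+\gamma>0$ and $b=kw_{k-1}+k^{1-\gamma}/(1-\gamma)$. Such a recursion forces $h(n)\to b/a$, which is exactly \eqref{eq:ck-recursion}. For $k=1$ we have $h_0=0$, so the limit is $\frac{1}{(1+\gamma)(1-\gamma)}=(1-\gamma^2)^{-1}=w_1$.

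We expect the main obstacle to be the rigorous version of the step ``the recursion forces $h(n)\to b/a$'', since regular variation of $U$ does not give the $1-\gamma/n$ expansion. To avoid it, we will work with the unnormalized quantities. Unrolling Lemma~\ref{lem:ce-recursive} gives
\[
\vce(n,k)=\sum_{m\le n}\pi_{m,n}\big[\tfrac{k}{m}\vce(m-1,k-1)+c_k(m)\big],\qquad \pi_{m,n}=\prod_{m<i\le n}\big(1-\tfrac{k}{i}\big)\sim(m/n)^{k}.
\]
By the induction hypothesis and the computation above, the bracket equals $\frac{1}{m}U(m)\,b\,[1+o(1)]$ as $m\to\infty$. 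The sum therefore becomes $b\,n^{-k}\sum_m m^{k-1}U(m)[1+o(1)]$. Because $m^{k-1}U(m)$ is regularly varying with index $k-1+\gamma>-1$, Karamata's theorem for sums gives $\sum_{m\le n}m^{k-1}U(m)\sim n^{k}U(n)/(k+\gamma)$. The finitely many early terms contribute $O(n^{-k})=o(U(n))$, and the $o(1)$ errors are absorbed by a standard $\varepsilon$-splitting of the sum. Together these yield $\vce(n,k)\sim \frac{b}{k+\gamma}U(n)=w_kU(n)$, which completes the induction.
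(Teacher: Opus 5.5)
Your proposal is correct, and it takes a genuinely different route from the paper. The paper handles exactly the obstacle you flag through the Smooth Variation Theorem. It replaces $U$ by an eventually $C^1$ function $\widetilde U\sim U$ with $n\widetilde U'(n)/\widetilde U(n)\to\gamma$, so that $\widetilde U(n-1)/\widetilde U(n)=1-\gamma/n+o(1/n)$ genuinely holds. It then divides the recursion of Lemma~\ref{lem:ce-recursive} by $\widetilde U(n)$ to get $\widehat g_k(n)-\widehat g_k(n-1)=\frac1n[-(k+\gamma)\widehat g_k(n-1)+k\widehat g_{k-1}(n-1)+\frac{k^{1-\gamma}}{1-\gamma}+o(1)]$. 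It concludes with the stability result Lemma~\ref{lem:linear-asymptotic-recursion}, which also needs the a priori bound $\vce(n,k)\le\mu_{n,k}=O(U(n))$. You instead exploit linearity. The recursion unrolls exactly with weights $\pi_{m,n}=\prod_{m<i\le n}(1-k/i)=\binom{m}{k}/\binom{n}{k}$, which is the probability that CE makes no acceptance among the first $n-m$ arrivals. The asymptotics then follow from Karamata's theorem for sums. This removes the need for both the smooth surrogate and the boundedness input, and the positivity of all summands makes your $\varepsilon$-splitting legitimate. When you write it out, add two remarks. First, the binomial form gives $\pi_{m,n}=(m/n)^k[1+o(1)]$ uniformly in $n\ge m$ as $m\to\infty$, which is what the splitting requires. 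Second, the sum version of Karamata follows from the integral version because $m^{k-1}U(m)$ is nondecreasing, so sum and integral differ by at most $n^{k-1}U(n)=o(n^kU(n))$. The paper's route buys uniformity of method: it mirrors Lemma~\ref{lem:dp-asymptotic-recursion} for the nonlinear DP recursion, which cannot be unrolled. Your route only works for the linear CE recursion, but there it is more elementary and fully self-contained.
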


\begin{proposition}
    Let $F$ be a distribution over $\R^+$ that satisfies the extreme value condition with $\gamma < 0$  at a finite end-point $x^\ast$. Then for any $k \geq 1,$ the CE heuristic's value function  $\vce(n,k)$ satisfies 
    \begin{align*}
        \vce(n,k) \longrightarrow kx^\ast.
    \end{align*}
    Moreover, if $-1<\gamma <0$, then
    \begin{align*}
        \vce(n,k) = kx^\ast -  w_k \left\{ x^\ast -  F^{\leftarrow}\left(1- \frac{1}{n} \right) \right\} [1 + o(1)],
    \end{align*}
    as $n \rightarrow \infty.$ Here the sequence $\{ w_k\}_{k \geq 1}$ is obtained recursively from $w_1 =(1-\gamma^2)^{-1},$ and  \eqref{eq:ck-recursion}. 
    \label{prop:ce-value-gamma-le-0}
\end{proposition}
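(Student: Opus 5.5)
Throughout, set $D(n,k):=kx^\ast-\vce(n,k)\ge 0$, $\tau_{n,k}:=F^{\leftarrow}(1-k/n)$, $U(s):=F^{\leftarrow}(1-1/s)$ and $\psi(s):=x^\ast-U(s)$. The plan is to turn Lemma~\ref{lem:ce-recursive} into a linear recursion for $D$ with explicit nonnegative weights, unroll it into a weighted sum, and evaluate that sum with regular-variation (Karamata-type) estimates, inductively in $k$.

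\textbf{Step 1: a linear recursion for $D$.} Subtract the identity of Lemma~\ref{lem:ce-recursive} from $kx^\ast=\frac kn\bigl(x^\ast+(k-1)x^\ast\bigr)+(1-\frac kn)kx^\ast$. This gives
\begin{align*}
D(n,k)=\Bigl(1-\frac kn\Bigr)D(n-1,k)+\frac kn\bigl[a_{n,k}+D(n-1,k-1)\bigr],
\end{align*}
where $D(\cdot,0)\equiv 0$ and
\begin{align*}
a_{n,k}:=x^\ast-\tau_{n,k}-\frac nk\int_{\tau_{n,k}}^{x^\ast}\bar F(u)\,du\ \ge 0 .
\end{align*}
The quantity $a_{n,k}$ is $x^\ast$ minus the mean of $X$ above $\tau_{n,k}$. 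Unrolling the recursion yields
\begin{align*}
D(n,k)=\sum_{m\le n}\frac km\bigl[a_{m,k}+D(m-1,k-1)\bigr]\prod_{j=m+1}^{n}\Bigl(1-\frac kj\Bigr)+(\text{initial terms}).
\end{align*}
The initial terms are finitely many and carry weights of order $n^{-k}$. For $m\ge 2k$ the product is $(m/n)^k(1+O(1/m))$.

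\textbf{Step 2: the input $a_{n,k}$.} For $\gamma<0$ we know $\psi\in\mathcal{RV}_{\gamma}$, and $\bar F(x^\ast-1/t)$ is regularly varying in $t$ with index $1/\gamma$. I would use the latter with Karamata's theorem, in the same way \eqref{eq:tail-integral-scaling} was obtained in the proof of Proposition~\ref{prop:dp-value-gamma-ge-0}. This should give the mean-excess asymptotic
\begin{align*}
\int_{U(s)}^{x^\ast}\bar F(u)\,du\sim\frac{-\gamma}{1-\gamma}\,\frac{\psi(s)}{s},
\qquad\text{hence}\qquad
a_{n,k}\sim\frac{\psi(n/k)}{1-\gamma}\sim\frac{k^{-\gamma}}{1-\gamma}\,\psi(n).
\end{align*}
In particular $a_{n,k}\to 0$.

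\textbf{Step 3: induction on $k$ via Karamata for sums.} First, the weights $\frac km(m/n)^k$ sum to at most a constant and put vanishing mass on any fixed range of $m$. So a Toeplitz/Ces\`aro argument together with $a_{m,k}\to0$ and the induction hypothesis $D(m,k-1)\to0$ gives $D(n,k)\to0$, i.e.\ $\vce(n,k)\to kx^\ast$, for every $\gamma<0$.

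Next assume $-1<\gamma<0$ and $D(m,k-1)\sim w_{k-1}\psi(m)$. Then the bracket in Step 1 is $b_m\sim\bigl[\frac{k^{-\gamma}}{1-\gamma}+w_{k-1}\bigr]\psi(m)$, so $b_m$ is regularly varying with index $\gamma$. Since $k+\gamma>0$, Karamata's theorem for sums gives
\begin{align*}
kn^{-k}\sum_{m\le n}m^{k-1}\psi(m)\sim\frac{k}{k+\gamma}\,\psi(n).
\end{align*}
The initial terms are $O(n^{-k})=o(\psi(n))$, again because $k+\gamma>0$. Therefore
\begin{align*}
D(n,k)\sim\frac{k}{k+\gamma}\Bigl[w_{k-1}+\frac{k^{-\gamma}}{1-\gamma}\Bigr]\psi(n),
\end{align*}
which is exactly \eqref{eq:ck-recursion}. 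The base case $k=1$ (with $w_0=0$) gives $w_1=\frac{1}{(1+\gamma)(1-\gamma)}=(1-\gamma^2)^{-1}$.

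\textbf{Main obstacle.} The delicate part is Step 3. One must replace $\prod_{j=m+1}^{n}(1-k/j)$ by $(m/n)^k$ uniformly in $m$, and pass from $b_m\sim c\,\psi(m)$ to the asymptotics of the weighted sum. The relative errors $o(1)$ in $b_m$ are uniform only for large $m$, so I would split the sum at $m=\delta n$ and at a fixed $m_0$. Potter bounds on $\psi$ then control the ranges $m_0\le m\le\delta n$ and $m<m_0$. This is precisely where $\gamma>-1$ enters: it makes $\sum m^{k-1}\psi(m)$ diverge like $n^{k+\gamma}$, so that early terms are negligible. A secondary issue is whether $P(X\ge\tau_{n,k})=k/n$ holds exactly; I will take the form of Lemma~\ref{lem:ce-recursive} as given, which already presumes it.
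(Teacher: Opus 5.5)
Your proof is correct, but it follows a different route from the paper in both parts. The shared ingredients are the starting point, namely the deficiency recursion for $D(n,k)$ from Lemma~\ref{lem:ce-recursive}, and the Karamata estimate $a_{n,k}\sim k^{-\gamma}\psi(n)/(1-\gamma)$, which the paper also derives.

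\textbf{First claim.} The paper argues probabilistically. Because CE accepts with probability $j/t$, the acceptance epochs form a uniformly random $k$-subset of the $n$ arrivals. On the event that all acceptances precede the last $\lfloor\varepsilon n\rfloor$ arrivals, every threshold is at least $U((m_n+1)/k)$, which gives $\liminf_n \vce(n,k)\ge kx^\ast(1-\varepsilon)^k$. Your Toeplitz argument is the analytic form of the same fact. For $k<m\le n$ the unrolled weights are exactly $\frac{k}{m}\prod_{j=m+1}^n(1-k/j)=\binom{m-1}{k-1}/\binom{n}{k}$, which is the law of the first acceptance epoch. Together with the initial weight $1/\binom{n}{k}$ on $D(k,k)$, they sum to one.

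\textbf{Second claim.} The paper does not unroll the recursion. It normalizes by a smoothly varying $\widetilde a\sim a$, obtains the increment equation $\widehat h_k(n)-\widehat h_k(n-1)=\frac1n\bigl[-(k+\gamma)\widehat h_k(n-1)+k\widehat h_{k-1}(n-1)+\frac{k^{1-\gamma}}{1-\gamma}+o(1)\bigr]$, and invokes Lemma~\ref{lem:linear-asymptotic-recursion}. That lemma requires $k+\gamma>0$ and an a priori bound on $\widehat h_k(n)$, which the paper asserts from the increment equation itself. Your explicit solution plus Karamata's theorem for sums buys three things:
\begin{itemize}
\item it dispenses with the Smooth Variation Theorem;
\item it dispenses with that boundedness hypothesis;
\item it makes the role of $\gamma>-1$ transparent: $\sum_{m\le n} m^{k-1}\psi(m)$ diverges like $n^{k+\gamma}$, so the constant initial term is $o(\psi(n))$.
\end{itemize}
The paper's route, in exchange, reuses one template for the DP analysis and for the $\gamma\in(0,1)$ CE case.

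\textbf{On your ``main obstacle''.} It is milder than you expect. Because $\sum_{m\le n}m^{k-1}\psi(m)\to\infty$ and $b_m/\psi(m)$ converges, a Stolz--Ces\`aro argument transfers $b_m\sim c\,\psi(m)$ directly to the weighted partial sums. The $O(1/m)$ correction to the weights contributes $O\bigl(n^{-k}\sum_{m\le n}m^{k-2}\psi(m)\bigr)=O(\psi(n)/n)$ for $k\ge2$, and for $k=1$ the product equals $m/n$ exactly. So you do not need the splitting at $\delta n$ or the Potter bounds.
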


\begin{proposition}
    Let $F$ be a distribution over $\R^+$ that satisfies the extreme value condition with $\gamma=0.$ Then for any $k \geq 1,$ the CE heuristic's value function  $\vce(n,k)$ satisfies 
    \begin{align*}
        \vce(n,k) = k F^{\leftarrow}\left(1- \frac{1}{n} \right) [1 + o(1)], \quad \text{ as } n \rightarrow \infty.
    \end{align*}
    \label{prop:ce-value-gamma-eq-0}
\end{proposition}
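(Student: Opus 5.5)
We sandwich $\vce(n,k)$ between $k\,U(n)[1+o(1)]$ from above and below, where $U(n):=F^{\leftarrow}(1-1/n)$. The upper bound is the prophet benchmark. The lower bound comes from unrolling the recursion in Lemma~\ref{lem:ce-recursive} and inducting on $k$.

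\emph{Step 1 (facts for $\gamma=0$).} For $F\in\mathcal D_0$ we use the standard EVT facts (\cite{deHaan2006extreme}, Thm.~1.1.6 and Lemma~1.2.9). There is an auxiliary function $a(\cdot)$ such that, for every $x>0$,
\[
\frac{U(tx)-U(t)}{a(t)}\to\log x,\qquad \frac{a(t)}{U(t)}\to 0 .
\]
The second limit uses that the support lies in $\R^+$; it covers both $x^*=\infty$ and finite $x^*$, since in the latter case $a(t)\to0$ while $U(t)\to x^*>0$. Two consequences follow.
\begin{itemize}
\item $U$ is slowly varying, i.e.\ $U\in\mathcal{RV}_0$. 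In particular $U(n/k)/U(n)\to1$ for each fixed $k$.
\item $E[M_n]/U(n)\to1$, by convergence of the first moment of $(M_n-U(n))/a(n)$ in the Gumbel domain under a finite mean.
\end{itemize}

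\emph{Step 2 (upper bound).} Any online policy is dominated by the prophet, so
\[
\vce(n,k)\le \mu_{n,k}\le k\,E[X_{n:n}]=k\,U(n)[1+o(1)].
\]
Hence $\limsup_n \vce(n,k)/U(n)\le k$.

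\emph{Step 3 (lower bound by induction on $k$).} In Lemma~\ref{lem:ce-recursive} we drop the nonnegative integral term and use $F^{\leftarrow}(1-k/n)=U(n/k)$. This gives
\[
\vce(n,k)\ \ge\ \Bigl(1-\frac kn\Bigr)\vce(n-1,k)+\frac kn\Bigl[\vce(n-1,k-1)+U(n/k)\Bigr].
\]
The base case $k=1$ uses $\vce(\cdot,0)=0$.

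For the inductive step, assume $\vce(m,k-1)\ge (k-1)(1-\varepsilon)U(m)$ for $m\ge m_0$. Step~1 gives $U(m/k)\ge(1-\varepsilon)U(m)$ for large $m$. Writing $b_m:=k(1-\varepsilon)U(m-1)\cdot\frac{U(m)}{U(m-1)}$ (asymptotically $k(1-\varepsilon)U(m)$), we unroll the linear recursion from $m_0$ to $n$. Using $\vce\ge0$, this yields
\[
\vce(n,k)\ \ge\ (1-\varepsilon)^2\sum_{m=m_0}^{n}\frac km\prod_{i=m+1}^{n}\Bigl(1-\frac ki\Bigr)\,k\,U(m).
\]
The product is $\asymp (m/n)^k$ uniformly in $m_0\le m\le n$, so the weights $\frac km\prod_{i>m}(1-k/i)$ are close to $k\,m^{k-1}/n^k$. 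These approximate a probability density $k s^{k-1}$ on $s=m/n\in[0,1]$, and their total mass tends to $1$.

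\emph{Step 4 (main obstacle).} It remains to show
\[
\sum_m \frac{k\,m^{k-1}}{n^k}\,U(m)\;\Big/\;U(n)\ \longrightarrow\ \int_0^1 k s^{k-1}\,ds = 1 .
\]
This is a Karamata-type Abelian step: the tightness near $m\ll n$ is where care is needed. We would control it with Potter bounds, $U(sn)/U(n)\ge C^{-1}s^{\delta}$ for $s\in[m_0/n,1]$ and some small $\delta>0$. This dominates the integrand by $k s^{k-1-\delta}$, which is integrable, so dominated convergence applies. The omitted terms with $m<m_0$ contribute $O(n^{-k})\cdot O(1)$ and are negligible, since $U(n)$ is bounded below.

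Letting $\varepsilon\downarrow0$ gives $\liminf_n \vce(n,k)/U(n)\ge k$. Combined with Step~2, this proves $\vce(n,k)=k\,F^{\leftarrow}(1-1/n)[1+o(1)]$.
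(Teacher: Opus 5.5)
Your argument is correct, but your lower bound takes a different route from the paper's. The paper never unrolls the recursion. Because the CE rule accepts with conditional probability exactly $j/t$, the acceptance times form a uniformly random $k$-subset of the $n$ arrivals. With probability tending to $(1-\varepsilon)^k$, all $k$ acceptances occur before the last $\lfloor\varepsilon n\rfloor$ arrivals. On that event, each accepted reward exceeds its threshold, which is at least $U(\varepsilon n/k)$. Slow variation of $U$ then gives $\liminf_n \vce(n,k)/U(n)\ge k(1-\varepsilon)^k$ in one step, with no induction on $k$ and no averaging argument.

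Your unrolled weights encode the same structure recursively. They are exactly
\[
\frac km\prod_{i=m+1}^{n}\Bigl(1-\frac ki\Bigr)=\frac{\binom{m-1}{k-1}}{\binom nk},
\]
which is the law of the number of remaining arrivals at the first CE acceptance. They sum to exactly one over $k\le m\le n$, so your ``$\asymp (m/n)^k$'' and ``total mass tends to $1$'' claims can be stated exactly.

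What your route buys is that it uses only Lemma~\ref{lem:ce-recursive}, the same tool as in the $\gamma\ne 0$ cases, and never needs the uniform-subset identity. The cost is the induction and the Abelian step. Your upper bound $\mu_{n,k}\le k\,\mathbb E[X_{n:n}]$ is also slightly leaner than the paper's appeal to the full $\gamma=0$ case of Theorem~\ref{thm:prophet}.

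One small slip in Step~4: the Potter inequality $U(sn)/U(n)\ge C^{-1}s^{\delta}$ is a lower bound, so it cannot serve as a dominating function. Domination is trivial anyway, since $U(m)\le U(n)$ for $m\le n$. More simply, for any $\eta\in(0,1)$ the terms with $m\ge \eta n$ alone contribute at least $\frac{U(\eta n)}{U(n)}\bigl(1-\eta^k+o(1)\bigr)\to 1-\eta^k$, and you then let $\eta\downarrow 0$.
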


\begin{proof}[Proof of Proposition~\ref{prop:ce-value-gamma-ge-0}]
Fix \(k\geq1\). Since \(\gamma\in(0,1)\), we have \(x^*=\infty\).
For notational convenience, set
\[
    q_{n,k}:=F^{\leftarrow}\!\left(1-\frac{k}{n}\right)
    =U\!\left(\frac{n}{k}\right).
\]
Lemma~\ref{lem:ce-recursive} gives, for \(n>1\) and \(k\geq1\),
\begin{equation}
\label{eq:ce-rec}
\begin{aligned}
\vce(n,k)
&=
\frac{k}{n}\vce(n-1,k-1)
+\int_{q_{n,k}}^\infty \bar F(u)\,du \\
&\quad
+\frac{k}{n}q_{n,k}
+\left(1-\frac{k}{n}\right)\vce(n-1,k).
\end{aligned}
\end{equation}

Since \(F\in\mathcal D_\gamma\) with \(\gamma\in(0,1)\),
\(U\in\mathcal{RV}_\gamma\). By the Smooth Variation Theorem
\cite[Theorem~1.8.2]{Bingham_Goldie_Teugels_1987}, there exists an
eventually continuously differentiable function
\(\widetilde U\sim U\) such that
\[
    \frac{n\widetilde U'(n)}{\widetilde U(n)}
    \longrightarrow\gamma.
\]
Consequently,
\begin{equation}
\label{eq:Un-ratio-ce}
    \frac{\widetilde U(n-1)}{\widetilde U(n)}
    =
    1-\frac{\gamma}{n}
    +o\!\left(\frac1n\right).
\end{equation}

Define the normalized values
\[
    \widehat g_j(n)
    :=
    \frac{\vce(n,j)}{\widetilde U(n)},
    \qquad j\geq0,
\]
with \(\widehat g_0(n)\equiv0\). Since
\(\vce(n,j)\leq\mu_{n,j}=O(U(n))\) for every fixed \(j\), and
\(\widetilde U(n)\sim U(n)\), each sequence
\((\widehat g_j(n))_{n\geq1}\) is bounded.

Moreover, regular variation and \(\widetilde U(n)\sim U(n)\) give
\begin{equation}
\label{eq:q-ratio-ce}
    \frac{q_{n,k}}{\widetilde U(n)}
    =
    \frac{U(n/k)}{\widetilde U(n)}
    =
    k^{-\gamma}[1+o(1)].
\end{equation}

Since \(\bar F\in\mathcal{RV}_{-1/\gamma}\), Karamata's theorem gives
\begin{equation}
\label{eq:karamata-ce}
    \int_t^\infty\bar F(u)\,du
    =
    \frac{\gamma}{1-\gamma}
    t\bar F(t)[1+o(1)],
    \qquad t\to\infty.
\end{equation}
Combining \eqref{eq:q-ratio-ce} and \eqref{eq:karamata-ce}, we obtain
\begin{equation}
\label{eq:tailint-ce}
\begin{aligned}
\frac{1}{\widetilde U(n)}
\int_{q_{n,k}}^\infty\bar F(u)\,du
&=
\frac{\gamma}{1-\gamma}
\frac{q_{n,k}}{\widetilde U(n)}
\bar F(q_{n,k})[1+o(1)] \\
&=
\frac{1}{n}\frac{\gamma}{1-\gamma}
k^{1-\gamma}[1+o(1)].
\end{aligned}
\end{equation}

Dividing \eqref{eq:ce-rec} by \(\widetilde U(n)\), and using
\eqref{eq:Un-ratio-ce}--\eqref{eq:tailint-ce}, gives
\begin{align}
\widehat g_k(n)
&=
\frac{k}{n}
\frac{\widetilde U(n-1)}{\widetilde U(n)}
\widehat g_{k-1}(n-1)
+
\left(1-\frac{k}{n}\right)
\frac{\widetilde U(n-1)}{\widetilde U(n)}
\widehat g_k(n-1)
\notag\\
&\quad
+\frac{k}{n}\frac{q_{n,k}}{\widetilde U(n)}
+
\frac{1}{\widetilde U(n)}
\int_{q_{n,k}}^\infty\bar F(u)\,du
\notag\\
&=
\left(1-\frac{k+\gamma}{n}\right)
\widehat g_k(n-1)
+
\frac{k}{n}\widehat g_{k-1}(n-1)
\notag\\
&\quad
+
\frac{1}{n}\frac{k^{1-\gamma}}{1-\gamma}
+
o\!\left(\frac1n\right).
\label{eq:gk-diff-ce}
\end{align}
Equivalently,
\begin{equation}
\label{eq:gk-increment-ce}
\widehat g_k(n)-\widehat g_k(n-1)
=
\frac1n
\left[
-(k+\gamma)\widehat g_k(n-1)
+k\widehat g_{k-1}(n-1)
+\frac{k^{1-\gamma}}{1-\gamma}
+o(1)
\right].
\end{equation}

We now proceed inductively in \(k\) to show the convergence of $\widehat g_k(n)$. The assertion is immediate for
\(k=0\). Suppose that
\[
    \widehat g_{k-1}(n)\longrightarrow w_{k-1}.
\]
Then \eqref{eq:gk-increment-ce} becomes
\[
\widehat g_k(n)-\widehat g_k(n-1)
=
\frac1n
\left[
-(k+\gamma)\widehat g_k(n-1)
+kw_{k-1}
+\frac{k^{1-\gamma}}{1-\gamma}
+o(1)
\right].
\]
Since \(k+\gamma>0\), Lemma~\ref{lem:linear-asymptotic-recursion}
implies that
\[
    \widehat g_k(n)\longrightarrow w_k,
\]
where
\[
    w_k
    =
    \frac{k}{k+\gamma}w_{k-1}
    +
    \frac{k^{1-\gamma}}
    {(k+\gamma)(1-\gamma)}.
\]
With \(w_0=0\), this gives
\[
    w_1
    =
    \frac{1}{(1+\gamma)(1-\gamma)}
    =
    \frac{1}{1-\gamma^2},
\]
and determines \(w_k\) recursively for every fixed \(k\geq1\).

Finally, since \(\widetilde U(n)\sim U(n)\),
\[
\begin{aligned}
\vce(n,k)
=
\widetilde U(n)\widehat g_k(n) 
&=
w_kU(n)[1+o(1)].
\end{aligned}
\]
\end{proof}

\subsection{The Prophet's Performance}

\begin{theorem}
    \label{thm:prophet}
    Let $X_1,\dots,X_n$ be i.i.d.\ with distribution $F\in D_\gamma$. Let
    $X_{1:n}\le\cdots\le X_{n:n}$ denote the order statistics and an integer $k\ge 1$.
    Define
    \[
    \mu_{n,k}\ :=\ \mathbb{E}\Big[\sum_{j=0}^{k-1} X_{n-j:n}\Big]
    \ =\ \sum_{r=1}^{k}\mathbb{E}[X_{n-r+1:n}],
    \]
    and let $x^*$ be the (possibly infinite) right endpoint of $F$.
    Then, for large enough $n$,
    \[
    \mu_{n,k}\ =
    \begin{cases}
    \displaystyle
    \frac{\Gamma(k+1-\gamma)}{(1-\gamma)\Gamma(k)}
    F^{\leftarrow}\!\Big(1-\frac{1}{n}\Big)[1+o(1)],
    & \gamma\in(0,1),
    \\[3mm]
    \displaystyle
    k\,F^{\leftarrow}\!\Big(1-\frac{1}{n}\Big)
 +\ a_n\,\sum_{r=1}^{k}\bigl(-\psi(r)\bigr) + o(a(n)),
    & \gamma=0,
    \\[3mm]
    \displaystyle
    k\,x^*
    \ -\
    \frac{\Gamma(k+1-\gamma)}{(1-\gamma)\Gamma(k)}
    \Big(x^*-F^{\leftarrow}\!\Big(1-\frac{1}{n}\Big)\Big)[1+o(1)],
    & \gamma<0,
    \end{cases}
    \]
    where $\psi(\cdot)$ is the digamma function, and $a_n$ is appropriately chosen, e.g.
    \[
    \psi(r) := \frac{\Gamma'(r)}{\Gamma(r)}, \qquad a_n\ = {F^{\leftarrow}(1-\frac{1}{en})-F^{\leftarrow}(1-\frac{1}{n})},
    \qquad x>1.
    \]

    % \[
    % \psi(r) = H_{r-1} - \gamma_{E}, \qquad H_m = \sum_{j=1}^m \frac{1}{j}, \qquad a_n\ = {F^{\leftarrow}(1-\frac{1}{en})-F^{\leftarrow}(1-\frac{1}{n})},
    % \qquad x>1,
    % \]
    % where $\gamma_E \approx 0.5772$ is the Euler–Mascheroni constant.

    % When k is large, 
    % \[
    % \mu_{n,k}\ =
    % \begin{cases}
    % \displaystyle
    % \frac{k^{1-\gamma}}{1-\gamma}\left(1
    % -
    % \frac{\gamma(1-\gamma)}{2}\frac{1}{k}
    % +
    % O\!\left(\frac{1}{k^2}\right) \right) F^{\leftarrow}\!\left(1-\frac{1}{n}\right),
    % & \gamma\in(0,1),
    % \\[3mm]
    % \displaystyle
    % \mu_{n,k} = kF^{\leftarrow}(1-\frac{1}{n}) + a(n)(-k \log k + k -\frac{1}{2} + o(1)),
    % & \gamma=0,
    % \\[3mm]
    % \displaystyle
    % kx^* - 
    % \frac{k^{1-\gamma}}{1-\gamma}\left(1
    % -
    % \frac{\gamma(1-\gamma)}{2}\frac{1}{k}
    % +
    % O\!\left(\frac{1}{k^2}\right) \right) \left(x^* - F^{\leftarrow}\!\left(1-\frac{1}{n}\right)\right),
    % & \gamma<0,
    % \end{cases}
    % \]
\end{theorem}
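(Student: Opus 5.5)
We will use the quantile representation of the order statistics. Write $X_{n-r+1:n} \overset{d}{=} U(1/V_r)$ jointly in $r$, where $U(t):=F^{\leftarrow}(1-1/t)$. Here $V_r = \Gamma_r/\Gamma_{n+1}\cdot$ is a uniform order statistic; concretely, $1-F(X_{n-r+1:n})$ is the $r$-th smallest of $n$ i.i.d. uniforms, $U_{(r)}$. Then
\[\mathbb{E}[X_{n-r+1:n}] = \mathbb{E}\bigl[U(1/U_{(r)})\bigr].\]
Since $nU_{(r)} \Rightarrow \Gamma_r$ (the $r$-th arrival of a unit-rate Poisson process, with Gamma$(r,1)$ law), we expect
\[\mathbb{E}[X_{n-r+1:n}]/U(n) \to \mathbb{E}[\Gamma_r^{-\gamma}] = \Gamma(r-\gamma)/\Gamma(r)\]
in the Fréchet case $\gamma\in(0,1)$. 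This follows from the regular variation $U\in\mathcal{RV}_\gamma$ already invoked in the proof of Proposition~\ref{prop:dp-value-gamma-ge-0}. Summing over $r$ and using the telescoping identity
\[\sum_{r=1}^k \frac{\Gamma(r-\gamma)}{\Gamma(r)} = \frac{\Gamma(k+1-\gamma)}{(1-\gamma)\Gamma(k)},\]
which we check by induction: the difference of consecutive right sides equals $\frac{\Gamma(k+1-\gamma)}{\Gamma(k+1)}$, since $\frac{\Gamma(k+1-\gamma)}{\Gamma(k)}\bigl[\frac{k+1-\gamma}{k}-1\bigr]\frac1{1-\gamma}$ simplifies to it. This gives the first case.

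The cases $\gamma<0$ and $\gamma=0$ follow the same template. For $\gamma<0$, apply the argument to $x^*-U(t)\in\mathcal{RV}_{\gamma}$ (de Haan, Corollary 1.2.10). Then
\[\mathbb{E}[x^*-X_{n-r+1:n}]/(x^*-U(n)) \to \mathbb{E}[\Gamma_r^{-\gamma}] = \Gamma(r-\gamma)/\Gamma(r),\]
and the same identity produces the third line. For $\gamma=0$ we use the extended regular variation $\bigl(U(tx)-U(t)\bigr)/a(t)\to\log x$, with $a_n = U(en)-U(n)$ being an admissible auxiliary function. This yields
\[\bigl(\mathbb{E}[X_{n-r+1:n}]-U(n)\bigr)/a_n \to \mathbb{E}[\log(1/\Gamma_r)] = -\psi(r),\]
and summing over $r$ gives the middle line.

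The main obstacle is justifying interchange of limit and expectation, i.e.\ uniform integrability of $U(1/U_{(r)})/U(n)$ (and of its analogues). I will split the expectation into three regions:
\[\{nU_{(r)}\in[\delta,M]\}, \quad \{nU_{(r)}<\delta\}, \quad \{nU_{(r)}>M\}.\]
On the middle region, local uniform convergence of $U(n/y)/U(n)\to y^{-\gamma}$ applies. On the small-$y$ region, Potter bounds give $U(n/y)/U(n)\le C y^{-\gamma-\epsilon}$ for $y\ge 1/n$ (up to $t\ge t_0$). Choosing $\epsilon$ with $\gamma+\epsilon<1$ keeps this integrable, because the density of $nU_{(r)}$ near $0$ is bounded by $Cy^{r-1}$ uniformly in $n$. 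This is exactly where finite mean, $\gamma<1$, is used. On the large-$y$ region, monotonicity of $U$ together with $U\ge0$ (or a Potter bound from the other side) bounds the contribution by $C y^{-\gamma+\epsilon}$. This is integrable against the binomial-type tail $P(nU_{(r)}>y)\le C e^{-cy}$ for $y\le n$.

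For $\gamma=0$ the analogous Potter-type inequalities for $\Pi$-variation, $|U(tx)-U(t)|/a(t)\le C\max(x^{\epsilon},x^{-\epsilon})$, play the same role. For $\gamma<0$ boundedness by $x^*$ makes the large-quantile side trivial, while the small side needs Potter bounds on $x^*-U$.
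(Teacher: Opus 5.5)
Your proposal is correct and follows the paper's proof essentially step for step. The shared steps are the quantile representation $X_{n-r+1:n}=U(1/U_{(r)})$, the weak limit $nU_{(r)}\Rightarrow\Gamma_r$, regular/$\Pi$-variation to identify the limits $\Gamma(r-\gamma)/\Gamma(r)$ and $-\psi(r)$, Potter-type bounds for uniform integrability, and the telescoping Gamma identity. The only difference is how uniform integrability is checked: you split into three regions, using the uniform density bound $y^{r-1}/(r-1)!$ near zero and the binomial exponential tail, which also covers the region $t<t_0$ where Potter's bounds do not apply, whereas the paper bounds a $q$-th moment with $q>1$ via the exact Beta moments of $W_{n,r}$.
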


\section{Competition Complexity in Large Markets}\label{sec:comp-complexity}

This section recasts competitive-ratio guarantees into a ``market inflation'' perspective: In particular, how much larger must the
market be in order to offset a policy's multiplicative loss relative to the Prophet? % at the extreme-value scale.
We study this notion, addressed as competition complexity (see, eg., \cite{beyhaghi2019optimal,Brustle-competition,correa2025}), from an instance-dependent perspective in this section.  Throughout we fix the number of units $k\ge 1$.

\subsection{Definition}\label{subsec:cc-def}
For a policy $\pi\in\{\tt{DP},\tt{CE}\}$, write
$\acr^{\pi}_k(F)\in(0,1]$ for its asymptotic competitive ratio (as $n\to\infty$ with $k$ fixed), namely
\[
\acr^{\tt{DP}}_k(F)=\acr_k(F),\qquad
\acr^{\tt{CE}}_k(F)=\apx_k(F).
\]
From a posted-price mechanism perspective, the notion of competition complexity, defined below, inquires into how many more bidders are needed for the policy $\pi$ to match the reward of a Prophet working on a market with $n$ users. 
\begin{definition}[Competition complexity]\label{def:cc}
Define the large-market competition complexity of policy $\pi$ at budget $k$ by
\begin{align*}
    C^\pi_k(F) := \liminf_{n \rightarrow \infty} \inf_{m \geq n} \left\{ \frac{m}{n}: V^\pi(m,k) \geq \mu_{n,k}\right\}.
\end{align*}
\end{definition}

\noindent 
Due to the characterizations of $\vdp, \vce,$ and $\mu_{n,k}$ in Sections \ref{sec:proof}, we equivalently write  
\begin{equation}\label{eq:cc-def}
\mathcal{C}^{\pi}_k(F)
\;:=\;
\liminf_{n\to\infty}\;
\inf_{m\ge n}
\left\{
\frac{m}{n}
:\;
\acr^{\pi}_k(F)\,U(m)\ \ge\ U(n)
\right\},
\end{equation}
In turn, the inequality in \eqref{eq:cc-def} asks for the smallest asymptotic inflation factor $m/n$ such that the loss
$\acr^{\pi}_k(F)$ can be compensated by moving from quantile $U(n)$ to $U(m),$ where $U(n) := F^{\leftarrow}(1-1/n).$

\subsection{Closed-form characterization}\label{subsec:cc}

\begin{proposition}[Competition complexity for $\gamma\in(0,1)$]\label{prop:cc-frechet}
Let $F\in D_\gamma$ with $\gamma\in(0,1)$ and fix $k\ge 1$. Then
\begin{equation}\label{eq:cc-frechet}
\mathcal{C}^{\tt{DP}}_k(F)=\acr_k(F)^{-1/\gamma},
\qquad
\mathcal{C}^{\tt{CE}}_k(F)=\apx_k(F)^{-1/\gamma}.
\end{equation}
\end{proposition}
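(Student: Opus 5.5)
The plan is to reduce the competition-complexity definition to a statement about the regularly varying quantile function $U(n)=F^{\leftarrow}(1-1/n)$, and then to use the uniform convergence theorem for regular variation. Write $\rho:=\acr^\pi_k(F)\in(0,1]$. By Propositions~\ref{prop:dp-value-gamma-ge-0} and \ref{prop:ce-value-gamma-ge-0} and Theorem~\ref{thm:prophet}, for each fixed $k$ we have
\[
V^\pi(m,k)=c_\pi U(m)[1+o(1)] \quad\text{and}\quad \mu_{n,k}=c_{\mu}U(n)[1+o(1)],
\]
with $c_\pi/c_\mu=\rho$. Hence the defining inequality $V^\pi(m,k)\ge \mu_{n,k}$ reads
\[
\rho\,\frac{U(m)}{U(n)}[1+o(1)] \ge 1 [1+o(1)],
\]
where the $o(1)$ terms vanish as $n\to\infty$ uniformly over $m\ge n$.

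Since $U\in\mathcal{RV}_\gamma$ with $\gamma>0$, the uniform convergence theorem (applied on compact sets of ratios $c=m/n\in[1,C]$) gives $U(cn)/U(n)\to c^\gamma$ uniformly in $c\in[1,C]$. Set $c^\ast:=\rho^{-1/\gamma}\ge 1$. I will prove the result as two bounds.

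\emph{Upper bound.} Fix $\delta>0$ and take $m=\lceil (c^\ast+\delta)n\rceil$. Then
\[
\rho\,\frac{U(m)}{U(n)}\to \rho\,(c^\ast+\delta)^\gamma>1,
\]
so for all large $n$ the inequality holds. The infimum over $m$ is therefore at most $(c^\ast+\delta)+1/n$. Letting $n\to\infty$ and then $\delta\downarrow 0$ gives $\limsup\le c^\ast$.

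\emph{Lower bound.} Suppose that along a subsequence some admissible $m_n$ satisfies $m_n/n\le c^\ast-\delta$. Because $U$ is nondecreasing, $U(m_n)\le U(\lfloor(c^\ast-\delta)n\rfloor+1)$, and so
\[
\rho\,\frac{U(m_n)}{U(n)}\le \rho(c^\ast-\delta)^\gamma+o(1)<1-\eta
\]
for some $\eta>0$. This contradicts the inequality once the $o(1)$ errors are small. One caveat: $V^\pi(m,k)$ is nondecreasing in $m$ for the DP, and also for CE via its asymptotics. In fact the contradiction only needs admissible $m\le (c^\ast-\delta)n$, and all such $m$ lie in a compact ratio range, so the uniform asymptotics suffice and monotonicity of $V^\pi$ is not needed. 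When $\rho=1$ (so $c^\ast=1$), the lower bound is trivial since $m\ge n$.

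The main obstacle is justifying the uniformity of the $o(1)$ in $V^\pi(m,k)/U(m)\to c_\pi$ across $m\ge n$. This is automatic, because it is a single sequence in $m$ and $m\ge n\to\infty$. What needs care is making sure the possibly unbounded range of $m$ causes no trouble. For the upper bound we only need one specific $m$. For the lower bound only the bounded ratios $m\le(c^\ast-\delta)n$ matter. The equivalent form \eqref{eq:cc-def} then follows, and the result is $C^\pi_k(F)=\rho^{-1/\gamma}$ for both $\pi=\tt{DP}$ (giving $\acr_k(F)^{-1/\gamma}$) and $\pi=\tt{CE}$ (giving $\apx_k(F)^{-1/\gamma}$).
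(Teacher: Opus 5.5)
Your proof is correct and rests on the same mechanism as the paper's: regular variation of $U$ gives $U(cn)/U(n)\to c^\gamma$, so the required inflation factor is $\rho^{-1/\gamma}$. Your version is more careful than the paper's in two ways. The paper works directly with the reduced form \eqref{eq:cc-def} and only checks that each fixed $c$ with $\rho c^\gamma\ge 1$ is eventually admissible. You instead carry the $[1+o(1)]$ factors from Definition~\ref{def:cc} explicitly, and you supply the matching lower bound by ruling out admissible $m\le (c^\ast-\delta)n$ via monotonicity of $U$, a step the paper leaves implicit.
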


\begin{proof}
Since $F\in D_\gamma$ with $\gamma>0$, the quantile function $U$ is regularly varying with index $\gamma$; in
particular, for every fixed $c\ge 1$,
\begin{equation}\label{eq:U-rv}
\frac{U(\lfloor cn\rfloor)}{U(n)}\to c^\gamma
\qquad (n\to\infty).
\end{equation}
Fix $c\ge 1$ and set $m=\lfloor cn\rfloor$. Dividing the condition
$\acr^{\pi}_k(F)\,U(m)\ge U(n)$ by $U(n)$ and applying \eqref{eq:U-rv} yields
\[
\acr^{\pi}_k(F)\,\frac{U(m)}{U(n)}
\to
\acr^{\pi}_k(F)\,c^\gamma.
\]
Hence, for large $n$, the inequality holds whenever $\acr^{\pi}_k(F)\,c^\gamma\ge 1$, i.e.,
$c\ge (\acr^{\pi}_k(F))^{-1/\gamma}$. Taking the infimum over such $c$ and then $\liminf_{n\to\infty}$
in \eqref{eq:cc-def} gives
\[
\mathcal{C}^{\pi}_k(F)=(\acr^{\pi}_k(F))^{-1/\gamma}.
\]
Substituting $\acr^{\tt{DP}}_k(F)=\acr_k(F)$ and $\acr^{\tt{CE}}_k(F)=\apx_k(F)$ proves
\eqref{eq:cc-frechet}.
\end{proof}

% When $\gamma\le 0$, our main results yield $\acr_k(F)=\apx_k(F)=1$ for each fixed $k$, and hence the loss vanishes
% at the relevant extreme-value scale. Under \eqref{eq:cc-def}, choosing $m=n$ shows
% \[
% \mathcal{C}^{\tt{DP}}_k(F)=\mathcal{C}^{\tt{CE}}_k(F)=1.
% \]

\subsection{Large-$k$ expansion}\label{subsec:cc-large-k}

In the Fr\'echet regime $\gamma\in(0,1)$, Propositions~\ref{prop:dp-large-k} and~\ref{prop:ce-large-k} imply that
there exist constants $M_{\tt{DP}},M_{\tt{CE}}\in\R$ such that for all $k\ge 1$,
\[
\acr_k(F)
=
1-\frac{\gamma(1-\gamma)}{2}\frac{\log k}{k}+O\!\Bigl(\frac{1}{k}\Bigr),
\qquad
\apx_k(F)
=
1-\frac{\gamma(1-\gamma)}{2}\frac{\log k}{k}+O\!\Bigl(\frac{1}{k}\Bigr),
\]
with $O(1/k)$ uniform over $\gamma\in(0,1)$. Combining with \eqref{eq:cc-frechet} and the expansion
\[
(1-x)^{-1/\gamma}=1+\frac{x}{\gamma}+O(x^2)\qquad (x\to 0),
\]
yields
\begin{equation}\label{eq:cc-large-k}
\mathcal{C}^{\tt{DP}}_k(F)
=
1+\frac{1-\gamma}{2}\frac{\log k}{k}+O\!\Bigl(\frac{1}{k}\Bigr),
\qquad
\mathcal{C}^{\tt{CE}}_k(F)
=
1+\frac{1-\gamma}{2}\frac{\log k}{k}+O\!\Bigl(\frac{1}{k}\Bigr),
\qquad k\to\infty.
\end{equation}
Thus, in large budgets, both policies require only a $(1+O(\frac{\log k}{k}))$ market-size inflation to match the
oracle's extreme-value scale, and DP and CE agree at the leading $\frac{\log k}{k}$ order.

\section{Concluding Remarks}

First, adopting the large-market viewpoint ($n\to\infty$ with the instance distribution $F$ fixed) yields a sharp and  interpretable characterization of optimal online performance for the multiunit online selection problem: under the extreme value condition, the instance-dependent asymptotic competitive ratio $\acr_k(F)$ depends on $F$ only through its extreme value index $\gamma$. This EVT-based reduction both unifies and extends prior asymptotic analyses, and it enables explicit worst-case guarantees over broad distributional classes. In particular, it leads to a tight large-$k$ worst-case guarantee of the form
$1-\frac{\log k}{8k}[1+\varepsilon]$, which despite being asymptotic, provides one of the strongest currently available analytical expressions for  worst-case performance over distributions satisfying the extreme value condition.

Second, the same framework makes it possible to rigorously quantify the performance loss from restricting attention to simpler, more interpretable policies. While fixed-threshold policies admit clean guarantees and achieve worst-case ratios approaching $1-1/\sqrt{2\pi k}$, the CE heuristic provides a more adaptive alternative that is widely used beyond prophet-inequality settings. Our results show that the CE heuristic indeed improves with $k$ and ultimately matches the leading-order performance of the optimal dynamic program as $k$ becomes large, even without coupling $k$ and $n$ through a fluid scaling. However, a finer comparison reveals an important caveat: when $k=o(n)$ and $n\to\infty$, the CE heuristic can incur divergent regret relative to the optimal dynamic program for continuous reward distributions, in sharp contrast to the uniformly bounded regret phenomena established for finitely supported rewards. This divergence underscores that conclusions drawn under the fluid scaling assumption $k\propto n$ can be qualitatively misleading when applied to fixed instances where $k \ll n.$

Our results point to several natural next steps.
First, it would be interesting to extend the EVT-based instance-dependent perspective beyond the single $k$-unit constraint to more general online linear programs with multi-dimensional budgets. CE-style policies and primal--dual methods are widely used in these settings because they are simple and scalable, but we currently lack a sharp characterization of their instance-dependent performance comparable to the $\gamma$-indexed description developed here. Understanding whether an extreme-value summary of the instance still governs asymptotic performance, and what the right analogue of the tail index should be under multiple constraints with random coefficients, are open questions.

Second, an important practical extension is to drop the assumption that the reward distribution $F$ is known. In many applications the decision-maker must learn $F$ from data or on the fly, which introduces a learning component. A key challenge is to design adaptive algorithms that learn the relevant tail features (such as high quantiles or the extreme value index) while maintaining strong performance guarantees, especially in regimes where $k$ and $n$ are decoupled and fluid approximations are not reliable.
%\noindent
%\textbf{Acknowledgements.} 

\bibliographystyle{abbrv}
\bibliography{references}
\clearpage
\setcounter{page}{1}

%\appendix
%\numberwithin{equation}{section}
%\begin{center}
%  \textbf{Supplementary material for the paper ``''}
%\end{center}
%\section{}

\vspace{10pt}
%Appendix
\appendix
\begin{center}
    \textbf{Appendices}
\end{center}
\section{Proofs of results in Section \ref{sec:dp}} 

\subsection{Proof of Theorem~\ref{thm:acr-dp}}

\begin{proof}[Proof of Theorem~\ref{thm:acr-dp}]
Recall that the optimal asymptotic competitive ratio of the dynamic program is
\[
\acr_k(F):=\lim_{n\to\infty}\frac{\vdp(n,k)}{\mu_{n,k}},
\]
where $\mu_{n,k}$ is the prophet value defined in Theorem~\ref{thm:prophet}.

\subsubsection*{Step 1: $\gamma\in(0,1)$}
Fix $\gamma\in(0,1)$. By Proposition~\ref{prop:dp-value-gamma-ge-0}, for each fixed $k\ge1$,
\begin{equation}\label{eq:dp-asymp}
\vdp(n,k)
=
\frac{v_k}{(1-\gamma)^\gamma}\,
F^{\leftarrow}\!\Bigl(1-\frac{1}{n}\Bigr)\,[1+o(1)],
\qquad n\to\infty.
\end{equation}
On the other hand, by Theorem~\ref{thm:prophet},
\begin{equation}\label{eq:oracle-asymp}
\mu_{n,k}
=
\frac{\Gamma(k+1-\gamma)}{(1-\gamma)\Gamma(k)}\,
F^{\leftarrow}\!\Bigl(1-\frac{1}{n}\Bigr)\,[1+o(1)],
\qquad n\to\infty.
\end{equation}
Dividing \eqref{eq:dp-asymp} by \eqref{eq:oracle-asymp} and letting $n\to\infty$ yields
\[
\acr_k(F)
=
\lim_{n\to\infty}\frac{\vdp(n,k)}{\mu_{n,k}}
=
\frac{\frac{v_k}{(1-\gamma)^\gamma}}{\frac{\Gamma(k+1-\gamma)}{(1-\gamma)\Gamma(k)}}
=
(1-\gamma)^{1-\gamma}\,\frac{v_k\,\Gamma(k)}{\Gamma(k+1-\gamma)}.
\]
This proves the first line of \eqref{eq:acr-dp}.

\subsubsection*{Step 2: $\gamma\notin(0,1)$}
If $\gamma=0$, then Theorem~\ref{thm:prophet} gives
\[
\mu_{n,k}
=
k\,F^{\leftarrow}\!\Bigl(1-\frac{1}{n}\Bigr)
+
a_n\sum_{r=1}^k\bigl(-\psi(r)\bigr),
\qquad n\to\infty,
\]
where $a_n=o(F^{\leftarrow}(1-\frac{1}{n}))$. Hence
\[
\mu_{n,k}
=
k\,F^{\leftarrow}\!\Bigl(1-\frac{1}{n}\Bigr)\,[1+o(1)].
\]
The $\gamma=0$ Proposition~\ref{prop:dp-value-gamma-eq-0} yields
\[
\vdp(n,k)
=
k\,F^{\leftarrow}\!\Bigl(1-\frac{1}{n}\Bigr)\,[1+o(1)].
\]
Therefore $\acr_k(F)=\lim_{n\to\infty}\vdp(n,k)/\mu_{n,k}=1$.

If $\gamma<0$, let $x^\ast<\infty$ be the right endpoint of $F$. Theorem~\ref{thm:prophet} gives
\[
\mu_{n,k}
=
k\,x^\ast
-
\frac{\Gamma(k+1-\gamma)}{(1-\gamma)\Gamma(k)}
\Bigl(x^\ast-F^{\leftarrow}\!\Bigl(1-\frac{1}{n}\Bigr)\Bigr)\,[1+o(1)].
\]
The $\gamma<0$ Proposition~\ref{prop:dp-value-gamma-le-0} gives the same leading term $kx^\ast$ for
$\vdp(n,k)$, and we have $F^{\leftarrow}(1-\frac{1}{n}) \to x^\ast$ when $n\to \infty$. Dividing again yields
$\vdp(n,k)/\mu_{n,k}=1+o(1)$ and hence $\acr_k(F)=1$.

This proves the second line of \eqref{eq:acr-dp}.

\end{proof}

\subsection{Proof of Proposition~\ref{prop:dp-large-k}}

\begin{lemma}\label{lem:Gamma-ratio-exp}
Fix $\gamma\in(0,1)$ and set $a:=1-\gamma\in(0,1)$. Define
\[
B_k:=k^{a}\frac{\Gamma(k)}{\Gamma(k+a)}\qquad (k\ge1).
\]
Then
\[
B_k=1+\frac{a(1-a)}{2k}+O\!\left(\frac1{k^2}\right)
=1+\frac{\gamma(1-\gamma)}{2k}+O\!\left(\frac1{k^2}\right),
\]
and in particular there exists $C_1<\infty$ such that $|B_k-1|\le C_1/k$ for all $k\ge1$.
\end{lemma}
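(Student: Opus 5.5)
The plan is to use the classical asymptotic expansion of the gamma-function ratio, derived from Stirling's series for $\log\Gamma$, and then handle small $k$ separately so that the final inequality holds uniformly in $k\ge1$. Recall
\[
\log\Gamma(z)=\bigl(z-\tfrac12\bigr)\log z - z + \tfrac12\log(2\pi) + \frac{1}{12z} + O\!\left(\frac{1}{z^3}\right),\qquad z\to\infty.
\]
Apply this at $z=k+a$ and $z=k$ and take the difference. Writing $\log(k+a)=\log k+\frac{a}{k}-\frac{a^2}{2k^2}+O(k^{-3})$, one obtains
\[
\log\Gamma(k+a)-\log\Gamma(k) = a\log k + \frac{a(a-1)}{2k} + O\!\left(\frac{1}{k^2}\right).
\]
To get this, expand $(k+a-\tfrac12)\log(k+a) - (k-\tfrac12)\log k - a$. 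The $a\log k$ term appears, and the $1/k$ coefficient is $a^2/2 + a(a-\tfrac12)\cdot 0$ corrections. Concretely,
\[
(k+a-\tfrac12)\Bigl(\tfrac{a}{k}-\tfrac{a^2}{2k^2}\Bigr) = a + \frac{a(a-\frac12)-a^2/2}{k} + O(k^{-2}) = a + \frac{a^2/2 - a/2}{k}+O(k^{-2}).
\]
The $\frac1{12z}$ terms cancel to $O(k^{-2})$. Hence
\[
\log B_k = a\log k + \log\Gamma(k)-\log\Gamma(k+a) = \frac{a(1-a)}{2k}+O(k^{-2}).
\]
Exponentiating gives $B_k = 1+\frac{a(1-a)}{2k}+O(k^{-2})$. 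Since $a(1-a)=\gamma(1-\gamma)$, this is the claimed expansion. A fallback that avoids Stirling is Wendel's/Gautschi's inequality combined with the known result $\Gamma(k+a)/\Gamma(k)=k^a\bigl(1+\binom{a}{2}k^{-1}+O(k^{-2})\bigr)$ (Tricomi–Erdélyi). I would cite that directly if a reference suffices.

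For the uniform bound, the expansion yields $K$ and $C$ such that $|B_k-1|\le C/k$ for $k\ge K$. For the finitely many $k<K$, $B_k$ is a finite positive number, so $|B_k-1|\le \max_{k<K}k|B_k-1|/k$. Taking $C_1$ as the larger of the two constants gives the bound for all $k$. Alternatively, Gautschi's inequality $k^{a-1}\cdot$ bounds, or more precisely Wendel's $(k/(k+a))^{1-a}\le \Gamma(k+a)/(k^a\Gamma(k))\le 1$, give $1\le B_k\le (1+a/k)^{1-a}\le 1+a(1-a)/k$ directly for every $k\ge1$. This is cleaner and I would present it as the primary route to the uniform bound.

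The main point needing care is the bookkeeping of the $1/k$ coefficient in the Stirling difference, where sign errors are easy. I would double-check it against $a=1$, where $B_k=1$ and the coefficient vanishes, and against $a=\tfrac12$, using the known $\Gamma(k+\tfrac12)/\Gamma(k)\sim\sqrt k(1-\tfrac1{8k})$, which gives $B_k\approx 1+\tfrac1{8k}$. This matches $a(1-a)/2=1/8$. If uniformity in $\gamma$ of the $O(k^{-2})$ term is later needed, as Section 7 suggests, the Stirling remainder bounds are uniform for $a\in[0,1]$, so the constant can be taken independent of $\gamma$.
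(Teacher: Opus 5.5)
Your proposal is correct and follows the paper's own proof: Stirling's series for $\log\Gamma$ at $z=k$ and $z=k+a$, the expansion of $\log(1+a/k)$, and exponentiation, with your $1/k$ coefficient matching the paper's. Your Wendel-inequality route to the uniform bound, $1\le B_k\le(1+a/k)^{1-a}\le 1+a(1-a)/k$ for all $k\ge1$, is a worthwhile addition: it fills in the paper's ``follows immediately'' with the explicit constant $C_1=\gamma(1-\gamma)\le 1/4$, which is also uniform in $\gamma$.
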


\begin{proof}
By Stirling's formula with remainder for $\log\Gamma$,
\[
\log\Gamma(z) = \Bigl(z-\tfrac12\Bigr)\log z - z + \tfrac12\log(2\pi) + \frac{1}{12z}
+O\!\left(\frac{1}{z^3}\right)
\qquad (z\to\infty).
\]
Apply this to $z=k$ and $z=k+a$ and subtract to obtain
\[
\log\frac{\Gamma(k)}{\Gamma(k+a)}
= -a\log k + \frac{a(1-a)}{2k}+O\!\left(\frac1{k^2}\right),
\]
where we utilize 
\[
\log (1 + \frac{a}{k}) = \frac{a}{k} -\frac{a^2}{2k^2} +O\left( \frac{1}{k^3}\right). \qquad (k\to\infty).
\]
Exponentiating yields
\begin{align*}
\frac{\Gamma(k)}{\Gamma(k+a)} &= k^{-a}\exp{\left(\frac{a(1-a)}{2k}+O\!\left(\frac1{k^2}\right)\right)}\\
&= k^{-a} \left(1+\frac{a(1-a)}{2k}+O\left(\frac1{k^2}\right)\right) \qquad (k\to\infty)
\end{align*}
% \[
% k^{a}\frac{\Gamma(k)}{\Gamma(k+a)} = k^{-a}\left(1+\frac{a(1-a)}{2k}+O\!\left(\frac1{k^2}\right)\right),
% \]
and multiplying by $k^a$ gives the claim. The bound $|B_k-1|\le C_1/k$ follows immediately.
\end{proof}

\begin{lemma}\label{lem:sk-asymp}
Fix $\gamma\in(0,1)$ and let $\{v_k\}_{k\ge1}$ be the sequence in Theorem~\ref{thm:acr-dp}. Define
\[
s_k:=(1-\gamma)\,v_k^{1/(1-\gamma)}\qquad (k\ge1).
\]
Then, as $k\to\infty$,
\[
s_k = k-\frac{\gamma}{2}\log k + O(1),
\qquad\text{and hence}\qquad
\frac{s_k}{k}=1-\frac{\gamma}{2}\frac{\log k}{k}+O\!\left(\frac1k\right).
\]
\end{lemma}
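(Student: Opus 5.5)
The plan is to turn the implicit recursion of Theorem~\ref{thm:acr-dp} into an explicit one-step recursion for $s_k$, show its increments are $1-\frac{\gamma}{2s_k}+O(s_k^{-2})$, and then sum.

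\emph{Step 1: explicit recursion for $v_k$.} Write $a:=1-\gamma\in(0,1)$ and $d_k:=v_k-v_{k-1}>0$. The defining equation $d_k^{1/\gamma}+v_{k-1}d_k^{1/\gamma-1}=1$ factors as $d_k^{1/\gamma-1}(d_k+v_{k-1})=1$, that is, $d_k^{a/\gamma}v_k=1$. Hence
\[
v_k-v_{k-1}=d_k=v_k^{-\gamma/a}.
\]
Since $s_k=a\,v_k^{1/a}$, we have $v_k^{-1/a}=a/s_k$. This gives the key small quantity
\[
x_k:=\frac{d_k}{v_k}=v_k^{-1/a}=\frac{a}{s_k}\in(0,1].
\]
The upper bound $x_k\le 1$ holds because $v_{k-1}\ge 0$. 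Moreover $v_k$ is strictly increasing, so $s_k$ is strictly increasing.

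\emph{Step 2: increment expansion.} We have
\[
s_k-s_{k-1}=a\,v_k^{1/a}\bigl[1-(1-x_k)^{1/a}\bigr]=s_k\bigl[1-(1-x_k)^{1/a}\bigr].
\]
Taylor-expand $(1-x)^{1/a}=1-\frac{x}{a}+\frac{1}{2a}\bigl(\frac1a-1\bigr)x^2+O(x^3)$ and substitute $x_k=a/s_k$. This yields
\[
s_k-s_{k-1}=1-\frac{\gamma}{2s_k}+O\!\left(\frac{1}{s_k^{2}}\right).
\]
The $O$-constant is uniform once $x_k\le 1/2$, say.

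\emph{Step 3: a priori growth, $s_k\to\infty$ and $s_k=k+O(\log k)$.} This is the main obstacle, because the expansion is useful only once we know $s_k$ is large. The function $x\mapsto[1-(1-x)^{1/a}]/x$ is bounded between positive constants on $(0,1]$. Hence $s_k-s_{k-1}=a\,[1-(1-x_k)^{1/a}]/x_k$ is bounded above and below by positive constants. It follows that $s_k\ge ck$ for some $c>0$ and all $k\ge1$, so $s_k\to\infty$. For all $k\ge k_0$ we then have $x_k\le 1/2$ and Step 2 applies. Summing Step 2 from $k_0$ gives $s_k=k+O\bigl(\sum_{j\le k}1/j\bigr)=k+O(\log k)$; the indices below $k_0$ contribute only an $O(1)$ constant.

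\emph{Step 4: refinement.} Using $s_j=j+O(\log j)$ gives
\[
\frac{1}{s_j}=\frac1j+O\!\left(\frac{\log j}{j^2}\right),
\]
and the error term is summable. Likewise $\sum_j s_j^{-2}<\infty$. Summing Step 2 from $k_0$ to $k$ then gives
\[
s_k=k-\frac{\gamma}{2}\sum_{j\le k}\frac1j+O(1)=k-\frac{\gamma}{2}\log k+O(1).
\]
Dividing by $k$ gives the second claimed form.
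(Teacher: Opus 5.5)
Your proposal is correct and follows essentially the same route as the paper. Both arguments derive the exact recursion $v_k-v_{k-1}=v_k^{-\gamma/(1-\gamma)}$, turn it into $s_{k-1}=s_k\bigl(1-(1-\gamma)/s_k\bigr)^{1/(1-\gamma)}$, Taylor-expand the increments as $1-\gamma/(2s_k)+O(s_k^{-2})$, get $s_k=k+O(\log k)$, and then sum using $1/s_j=1/j+O(\log j/j^2)$.

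The one real difference is your a priori growth step. Bounding $[1-(1-x)^{1/a}]/x$ on $(0,1]$ puts every increment in $[1-\gamma,1]$, so $s_k\ge(1-\gamma)k$ holds for all $k$ directly. The paper instead proves $v_k\to\infty$ by contradiction and only then gets a linear lower bound for large $k$ from the Taylor expansion; your version is cleaner and gives an explicit constant.
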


\begin{proof}
Let $\Delta_k:=v_k-v_{k-1}$. The defining equation in Theorem~\ref{thm:acr-dp} is equivalent to
\[
\Delta_k^{1/\gamma-1}(v_{k-1}+\Delta_k)=1
\quad\Longleftrightarrow\quad
\Delta_k^{1/\gamma-1}v_k=1
\quad\Longleftrightarrow\quad
\Delta_k=v_k^{-\gamma/(1-\gamma)}.
\]
Therefore,
\begin{equation}\label{eq:vk-diff-lemma}
v_k-v_{k-1}=v_k^{-\gamma/(1-\gamma)}\qquad (k\ge2).
\end{equation}
Dividing \eqref{eq:vk-diff-lemma} by $v_k$ gives
\[
\frac{v_k-v_{k-1}}{v_k}=v_k^{-1/(1-\gamma)}=\frac{1-\gamma}{s_k},
\]
so
\[
v_{k-1}=v_k\Bigl(1-\frac{1-\gamma}{s_k}\Bigr).
\]
Raising to the power $1/(1-\gamma)$ and multiplying by $(1-\gamma)$ yields the exact identity
\begin{equation}\label{eq:sk-exact-lemma}
s_{k-1}=s_k\Bigl(1-\frac{1-\gamma}{s_k}\Bigr)^{1/(1-\gamma)}.
\end{equation}

Next we show $s_k \to \infty$ as $k\to \infty$. In fact, if $v_k \le L$ for some $L\in \R$ for every $k$, then from the recursion below,
\[
v_k - v_{k-1} = v_k^{-\gamma/(1-\gamma)} \ge L^{-\gamma/(1-\gamma)},
\]
which clearly implies $v_k$ will be unbounded from above, so it leads to the contradiction.

Therefore, as $k \to \infty$, $v_k \to \infty$, and thus $s_k = (1-\gamma) v_k^{1/(1-\gamma)}\to \infty$ and it is non-decreasing.

Write $u:=(1-\gamma)/s_k$. Since $s_k\uparrow\infty$, we have $u\downarrow0$.
For $|u|\le 1/2$, a third-order Taylor expansion of $(1-u)^{1/(1-\gamma)}$ around $u=0$ gives
\begin{equation}\label{eq:taylor-lemma}
(1-u)^{1/(1-\gamma)}
=
1-\frac{u}{1-\gamma}+\frac{\gamma}{2(1-\gamma)^2}u^2+R(u),
\qquad |R(u)|\le C\,u^3,
\end{equation}
for some constant $C<\infty$ (depending only on $\gamma$). Choose $k_0$ such that
$u\le 1/2$ for all $k\ge k_0$. Substituting \eqref{eq:taylor-lemma} with $u=(1-\gamma)/s_k$
into \eqref{eq:sk-exact-lemma} yields, for all $k\ge k_0$,
\[
s_{k-1}
= s_k\left(1-\frac{1}{s_k}+\frac{\gamma}{2s_k^2}+O\!\left(\frac{1}{s_k^3}\right)\right)
= s_k-1+\frac{\gamma}{2s_k}+O\!\left(\frac{1}{s_k^2}\right),
\]
and hence
\begin{equation}\label{eq:sk-inc-lemma}
s_k-s_{k-1}=1-\frac{\gamma}{2s_k}+O\!\left(\frac{1}{s_k^2}\right)\qquad (k\ge k_0).
\end{equation}
From \eqref{eq:sk-inc-lemma}, we have $s_k-s_{k-1}=1+o(1)$ since $s_k \uparrow \infty$ as $k \to \infty$.

In particular, for sufficiently larg $k\ge k_0$, $s_k - s_{k-1} \ge \frac{1}{2}$, and then $\exists c_0>0$ such that 
\[
s_k \ge c_0k,
\]
and hence 
\begin{align*}
0\ge s_k - s_{k-1} -1 &= -\frac{\gamma}{2s_k} + O(s_k^{-2})\\
&\ge \frac{-\gamma}{2ck} + O(s_k^{-2}),
\end{align*}
for sufficiently large $k$, and then $\exists c_1>0$, such that 
\[
|s_k - s_{k-1} - 1| \le \frac{c_1}{k}.
\]
Therefore, there exists $k_1\ge k_0$ such that for $\forall k\ge k_1+1$,
\[
s_k \ge k - c_1\sum_{j=k_1+1}^{k} \frac1{j} \ge k - c_1 \log k.
\]
Further, there exists $k_2 \ge k_1$ and $c_2>0$ such that 
\[
\frac{1}{s_k} - \frac{1}{k} \le \frac{1}{k-c_1 \log k} - \frac{1}{k} \le c_2 \frac{\log k}{k^2}.
\]
Also notice 
\[
\sum_{j=1}^\infty \frac{\log j}{j^2} < \infty.
\]
It yields
\[
\sum_{j=k_2+1}^k\frac{1}{s_j} = \sum_{j=k_2+1}^k\frac{1}{j} + O(1) = \log k + O(1),
\]
along with
\[
\sum_{j=k_2+1}^\infty \frac{1}{s_j^2}\ \le\ \frac{1}{c_0^2}\sum_{j=k_2+1}^\infty \frac{1}{j^2}\ <\ \infty.
\]
Therefore, summing \eqref{eq:sk-inc-lemma} from $k_2+1$ to $k$ gives
\begin{align}\label{eq:sk-sum-lemma}
s_k
&= s_{k_2}+(k-k_2)-\frac{\gamma}{2}\sum_{j=k_2+1}^k\frac{1}{s_j}
+O\!\left(\sum_{j=k_2+1}^k\frac{1}{s_j^2}\right).\\
&= k-\frac{\gamma}{2}\log k+O(1)
\end{align}

\end{proof}

\begin{lemma}\label{lem:Ak-exp}
With $A_k:=\dfrac{(1-\gamma)^{1-\gamma}v_k}{k^{1-\gamma}}$, we have
\[
A_k
=\left(\frac{s_k}{k}\right)^{1-\gamma}
=1-\frac{\gamma(1-\gamma)}{2}\frac{\log k}{k}+O\!\left(\frac{1}{k}\right).
\]
\end{lemma}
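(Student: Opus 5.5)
The lemma has two parts: an exact identity and an expansion.

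For the identity, I invert the definition $s_k=(1-\gamma)v_k^{1/(1-\gamma)}$ to get $v_k=(s_k/(1-\gamma))^{1-\gamma}$. Then
\[
(1-\gamma)^{1-\gamma}v_k=(1-\gamma)^{1-\gamma}\,\frac{s_k^{1-\gamma}}{(1-\gamma)^{1-\gamma}}=s_k^{1-\gamma},
\]
and dividing by $k^{1-\gamma}$ gives $A_k=(s_k/k)^{1-\gamma}$ exactly, for every $k\ge1$. This step is purely algebraic.

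For the expansion, I start from Lemma~\ref{lem:sk-asymp}, which gives $s_k/k=1-x_k$ with
\[
x_k:=\frac{\gamma}{2}\frac{\log k}{k}+O\!\left(\frac1k\right),
\]
so $x_k\to0$. Since $1-\gamma\in(0,1)$, the map $x\mapsto(1-x)^{1-\gamma}$ is smooth on $[0,1/2]$. A first-order Taylor expansion with remainder gives
\[
(1-x)^{1-\gamma}=1-(1-\gamma)x+R(x),\qquad |R(x)|\le C x^2 \quad\text{for } |x|\le 1/2,
\]
with $C$ depending only on $\gamma$. Choose $k_0$ such that $|x_k|\le1/2$ for all $k\ge k_0$. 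Substituting yields
\[
A_k=1-\frac{\gamma(1-\gamma)}{2}\frac{\log k}{k}+O\!\left(\frac1k\right)+O\!\left(\frac{\log^2 k}{k^2}\right).
\]
The last term is $O(1/k)$, because $\log^2 k/k$ is bounded. This gives the claim for $k\ge k_0$.

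For the finitely many $k<k_0$, both $A_k$ and $1-\frac{\gamma(1-\gamma)}{2}\frac{\log k}{k}$ are finite. Enlarging the implied constant therefore makes the bound hold for all $k\ge1$. This uniformity over all $k$ is what Proposition~\ref{prop:dp-large-k} needs.

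The only point that needs care is that the Taylor remainder is controlled only once $x_k$ is small, which is handled by the threshold $k_0$. With Lemma~\ref{lem:sk-asymp} in hand, no step is a real obstacle.
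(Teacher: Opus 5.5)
Your proposal is correct and follows essentially the same route as the paper: the exact identity $A_k=(s_k/k)^{1-\gamma}$ comes straight from the definition $s_k=(1-\gamma)v_k^{1/(1-\gamma)}$, and a first-order expansion of $(1+z)^{1-\gamma}$ around $z=0$, fed by Lemma~\ref{lem:sk-asymp}, gives the expansion once the quadratic remainder $O((\log k)^2/k^2)$ is absorbed into $O(1/k)$. Your explicit treatment of the finitely many $k<k_0$ is a harmless extra that the paper defers to the proof of Proposition~\ref{prop:dp-large-k}. One small slip: $x\mapsto(1-x)^{1-\gamma}$ is smooth on all of $[-1/2,1/2]$, not just on $[0,1/2]$. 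That is what you actually need, since the $O(1/k)$ part of $x_k$ may be negative.
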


\begin{proof}
By definition, $A_k=(1-\gamma)^{1-\gamma}v_k/k^{1-\gamma}=(s_k/k)^{1-\gamma}$.
Lemma~\ref{lem:sk-asymp} gives $\frac{s_k}{k}=1+z_k$ with
$z_k=-(\gamma/2)\,(\log k)/k+O(1/k)$. Since $z_k\to0$, the expansion
$(1+z)^{1-\gamma}=1+(1-\gamma)z+O(z^2)$ yields
\[
A_k
=1+(1-\gamma)z_k+O(z_k^2)
=1-\frac{\gamma(1-\gamma)}{2}\frac{\log k}{k}+O\!\left(\frac{1}{k}\right),
\]
because $z_k^2=O((\log k)^2/k^2)=O(1/k)$.
\end{proof}

\begin{proof}[Proof of Proposition~\ref{prop:dp-large-k}]
Fix $\gamma\in(0,1)$. By Theorem~\ref{thm:acr-dp},
\[
\acr_k(F)=A_k\,B_k,
\qquad
A_k:=\frac{(1-\gamma)^{1-\gamma}v_k}{k^{1-\gamma}},
\qquad
B_k:=k^{1-\gamma}\frac{\Gamma(k)}{\Gamma(k+1-\gamma)}.
\]
By Lemma~\ref{lem:Ak-exp} and Lemma~\ref{lem:Gamma-ratio-exp},
\[
A_k
=
1-\frac{\gamma(1-\gamma)}{2}\frac{\log k}{k}+O\!\left(\frac{1}{k}\right),
\qquad
B_k=1+O\!\left(\frac{1}{k}\right).
\]
Multiplying these two expansions gives
\[
\acr_k(F)
=
1-\frac{\gamma(1-\gamma)}{2}\frac{\log k}{k}+O\!\left(\frac{1}{k}\right),
\qquad k\to\infty.
\]
Hence there exist $k_\ast$ and $M_1<\infty$ such that, for all $k\ge k_\ast$,
\[
\left|\acr_k(F)-\left(1-\frac{\gamma(1-\gamma)}{2}\frac{\log k}{k}\right)\right|
\le \frac{M_1}{k}.
\]
For the finitely many indices $1\le k<k_\ast$, define
\[
M_2
:=\max_{1\le k<k_\ast}
k\left|\acr_k(F)-\left(1-\frac{\gamma(1-\gamma)}{2}\frac{\log k}{k}\right)\right|.
\]
Then $M_2<\infty$ and the same inequality holds with $M_2$ for all $1\le k<k_\ast$.
Taking $M:=\max\{M_1,M_2\}$ yields \eqref{eq:dp-large-k} for every $k\ge1$.
\end{proof}

\subsection{Proof of Corollary~\ref{cor:dp-large-k}}

\begin{proof}
    Fix $\varepsilon>0$. If $\gamma\notin(0,1)$, then by Theorem~\ref{thm:acr-dp} we have $\acr_k(F)=1$ for every
    $k$, and the desired inequality holds trivially for all $k$.
    
    Now assume $\gamma\in(0,1)$. By Proposition~\ref{prop:dp-large-k}, there exists $M<\infty$ such that for all
    $k\ge1$,
    \begin{equation}\label{eq:acr-lb-from-prop}
    \acr_k(F)\ \ge\ 1-\frac{\gamma(1-\gamma)}{2}\frac{\log k}{k}-\frac{M}{k}.
    \end{equation}
    Since $\gamma(1-\gamma)\le 1/4$ for all $\gamma\in(0,1)$, \eqref{eq:acr-lb-from-prop} implies
    \begin{equation}\label{eq:acr-lb-unif}
    \acr_k(F)\ \ge\ 1-\frac{1}{8}\frac{\log k}{k}-\frac{M}{k}.
    \end{equation}
    Choose $k_\varepsilon$ sufficiently large so that
    \[
    \frac{M}{k}\le \frac{\varepsilon}{8}\frac{\log k}{k}
    \qquad\text{for all }k\ge k_\varepsilon,
    \]
    specifically one may take
    $k_\varepsilon:=\left\lceil \exp(8M/\varepsilon)\right\rceil$). Then for all $k\ge k_\varepsilon$,
    \[
    \frac{1}{8}\frac{\log k}{k}+\frac{M}{k}
    \le \frac{1+\varepsilon}{8}\frac{\log k}{k}.
    \]
    Combining with \eqref{eq:acr-lb-unif} yields
    \[
    \acr_k(F)\ \ge\ 1-\frac{1+\varepsilon}{8}\frac{\log k}{k},
    \qquad \forall k\ge k_\varepsilon.
    \]
    This proves the corollary, and the bound is independent of $\gamma$ because we only used the universal inequality
    $\gamma(1-\gamma)\le 1/4$.

\end{proof}

\section{Proofs of results in Section \ref{sec:ce}} 

\subsection{Proof of Theorem~\ref{thm:acr-ce}}

\begin{proof}[Proof of Theorem~\ref{thm:acr-ce}]
    Recall that the asymptotic competitive ratio of the CE heuristic is
    \[
    \apx_k(F):=\lim_{n\to\infty}\frac{\vce(n,k)}{\mu_{n,k}},
    \]
    whenever the limit exists.
    
    We first treat the Fr\'echet case $\gamma\in(0,1)$, and then discuss $\gamma\notin(0,1)$.
    
    \subsubsection*{Step 1: $\gamma\in(0,1)$}
    Let $\gamma\in(0,1)$ be fixed. By Proposition~\ref{prop:ce-value-gamma-ge-0}, for each fixed $k\ge1$,
    \begin{equation}\label{eq:ce-asymp-n}
    \vce(n,k)=w_k\,F^{\leftarrow}\!\Bigl(1-\frac{1}{n}\Bigr)\,[1+o(1)],
    \qquad n\to\infty,
    \end{equation}
    where $\{w_k\}$ satisfies $w_1=(1-\gamma^2)^{-1}$ and
    \begin{equation}\label{eq:wk-rec}
    w_k=\frac{k}{k+\gamma}w_{k-1}+\frac{1}{k+\gamma}\frac{k^{1-\gamma}}{1-\gamma},
    \qquad k\ge2.
    \end{equation}
    On the other hand, Theorem~\ref{thm:prophet} gives the oracle asymptotics
    \begin{equation}\label{eq:oracle-asymp-n}
    \mu_{n,k}=
    \frac{\Gamma(k+1-\gamma)}{(1-\gamma)\Gamma(k)}\,
    F^{\leftarrow}\!\Bigl(1-\frac{1}{n}\Bigr)\,[1+o(1)],
    \qquad n\to\infty.
    \end{equation}
    Combining \eqref{eq:ce-asymp-n} and \eqref{eq:oracle-asymp-n} yields
    \begin{equation}\label{eq:apx-in-wk}
    \apx_k(F)
    =\lim_{n\to\infty}\frac{\vce(n,k)}{\mu_{n,k}}
    =
    (1-\gamma)\frac{\Gamma(k)}{\Gamma(k+1-\gamma)}\,w_k.
    \end{equation}
    Therefore, it remains to express $w_k$ in closed form and substitute into \eqref{eq:apx-in-wk}.
    
    \begin{lemma}\label{lem:wk-closed-form}
    Let $\gamma\in(0,1)$ and define $\{w_k\}_{k\ge1}$ by $w_1=(1-\gamma^2)^{-1}$ and \eqref{eq:wk-rec}.
    Then for every $k\ge2$,
    \begin{equation}\label{eq:wk-closed}
    w_k
    =
    \frac{\Gamma(k+1)\Gamma(2+\gamma)}{\Gamma(k+1+\gamma)}\,w_1
    \;+\;
    \frac{1}{1-\gamma}\,
    \frac{\Gamma(k+1)}{\Gamma(k+1+\gamma)}
    \sum_{r=2}^{k}\frac{\Gamma(r+\gamma)}{\Gamma(r+1)}\,r^{1-\gamma}.
    \end{equation}
    In particular, using $\Gamma(1+\gamma)=\gamma\Gamma(\gamma)$ and $w_1=(1-\gamma^2)^{-1}$,
    the first term in \eqref{eq:wk-closed} equals
    \[
    \frac{1}{1-\gamma}\,\frac{\Gamma(k+1)}{\Gamma(k+1+\gamma)} \Gamma(1+\gamma).
    \]
    Hence, for every $k\ge1$,
    \begin{equation}\label{eq:wk-sum}
    w_k
    =
    \frac{1}{1-\gamma}\,
    \frac{\Gamma(k+1)}{\Gamma(k+1+\gamma)}
    \sum_{r=1}^{k}\frac{\Gamma(r+\gamma)}{\Gamma(r+1)}\,r^{1-\gamma}.
    \end{equation}
    \end{lemma}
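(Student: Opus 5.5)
The recursion \eqref{eq:wk-rec} is a first-order linear recursion, so I would solve it with a gamma-function integrating factor. Define $P_k := \Gamma(k+1+\gamma)/\Gamma(k+1)$, so that
\[
\frac{P_k}{P_{k-1}} = \frac{k+\gamma}{k}.
\]
Multiplying \eqref{eq:wk-rec} by $P_k$ gives
\[
P_k w_k = P_k\frac{k}{k+\gamma}w_{k-1} + \frac{P_k}{k+\gamma}\frac{k^{1-\gamma}}{1-\gamma} = P_{k-1}w_{k-1} + \frac{1}{1-\gamma}\frac{\Gamma(k+\gamma)}{\Gamma(k+1)}k^{1-\gamma},
\]
because $P_k/(k+\gamma) = \Gamma(k+\gamma)/\Gamma(k+1)$. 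Set $y_k := P_k w_k$. Then $y_k - y_{k-1} = \frac{1}{1-\gamma}\frac{\Gamma(k+\gamma)}{\Gamma(k+1)}k^{1-\gamma}$ for $k \ge 2$.

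Telescoping from $2$ to $k$ gives
\[
y_k = y_1 + \frac{1}{1-\gamma}\sum_{r=2}^k \frac{\Gamma(r+\gamma)}{\Gamma(r+1)}r^{1-\gamma}.
\]
Here $y_1 = P_1 w_1 = \frac{\Gamma(2+\gamma)}{\Gamma(2)}w_1 = \Gamma(2+\gamma)w_1$. Dividing by $P_k$ yields exactly \eqref{eq:wk-closed}.

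For the simplification, substitute $w_1 = (1-\gamma^2)^{-1} = [(1-\gamma)(1+\gamma)]^{-1}$. Since $\Gamma(2+\gamma) = (1+\gamma)\Gamma(1+\gamma)$, the first term becomes $\frac{1}{1-\gamma}\frac{\Gamma(k+1)}{\Gamma(k+1+\gamma)}\Gamma(1+\gamma)$. Now observe that the $r=1$ summand of the sum is $\frac{\Gamma(1+\gamma)}{\Gamma(2)}\cdot 1 = \Gamma(1+\gamma)$, so this first term is absorbed into the sum, giving \eqref{eq:wk-sum} for $k\ge2$.

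For $k=1$, \eqref{eq:wk-sum} reads $\frac{1}{1-\gamma}\frac{\Gamma(2)}{\Gamma(2+\gamma)}\Gamma(1+\gamma) = \frac{1}{(1-\gamma)(1+\gamma)} = w_1$, so it holds directly. There is no real obstacle. The only point needing care is the bookkeeping of the gamma ratios $P_k/(k+\gamma)$ and $\Gamma(2+\gamma)=(1+\gamma)\Gamma(1+\gamma)$, which a direct substitution check confirms.
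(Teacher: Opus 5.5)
Your proposal is correct and follows the paper's proof essentially step for step. You use the same integrating factor $\Gamma(k+1+\gamma)/\Gamma(k+1)$, the same telescoping sequence $y_k$ with $y_1=\Gamma(2+\gamma)w_1$, and the same absorption of the first term as the $r=1$ summand via $\Gamma(2+\gamma)=(1+\gamma)\Gamma(1+\gamma)$; your explicit check of the $k=1$ case is a small completeness point that the paper leaves implicit.
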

    
    \begin{proof}
    Multiply \eqref{eq:wk-rec} by $\Gamma(k+1+\gamma)/\Gamma(k+1)$. Using
    \[
    \frac{\Gamma(k+1+\gamma)}{\Gamma(k+1)}\cdot\frac{k}{k+\gamma}
    =
    \frac{\Gamma(k+\gamma)}{\Gamma(k)}
    \qquad\text{and}\qquad
    \frac{\Gamma(k+1+\gamma)}{\Gamma(k+1)}\cdot\frac{1}{k+\gamma}
    =
    \frac{\Gamma(k+\gamma)}{\Gamma(k+1)},
    \]
    we obtain the telescoping form
    \[
    \frac{\Gamma(k+1+\gamma)}{\Gamma(k+1)}w_k
    =
    \frac{\Gamma(k+\gamma)}{\Gamma(k)}w_{k-1}
    +
    \frac{1}{1-\gamma}\,\frac{\Gamma(k+\gamma)}{\Gamma(k+1)}\,k^{1-\gamma}.
    \]
    Define
    \[
    y_k:=\frac{\Gamma(k+1+\gamma)}{\Gamma(k+1)}w_k.
    \]
    Then for $k\ge2$,
    \begin{equation}\label{eq:yk-rec}
    y_k=y_{k-1}+\frac{1}{1-\gamma}\,\frac{\Gamma(k+\gamma)}{\Gamma(k+1)}\,k^{1-\gamma}.
    \end{equation}
    Summing \eqref{eq:yk-rec} from $2$ to $k$ yields
    \[
    y_k
    =
    y_1+\frac{1}{1-\gamma}\sum_{r=2}^{k}\frac{\Gamma(r+\gamma)}{\Gamma(r+1)}\,r^{1-\gamma}.
    \]
    Since $y_1=\Gamma(2+\gamma)w_1=\Gamma(1+\gamma)(1+\gamma)w_1$, substituting back
    $w_k=\frac{\Gamma(k+1)}{\Gamma(k+1+\gamma)}y_k$ gives \eqref{eq:wk-closed}.
    
    Finally, observe that $w_1=(1-\gamma^2)^{-1}=\bigl((1-\gamma)(1+\gamma)\bigr)^{-1}$ and
    $\Gamma(2+\gamma)=(1+\gamma)\Gamma(1+\gamma)$, so
    \[
    \frac{\Gamma(k+1)\Gamma(2+\gamma)}{\Gamma(k+1+\gamma)}w_1
    =
    \frac{1}{1-\gamma}\frac{\Gamma(k+1)}{\Gamma(k+1+\gamma)}
    \cdot
    \Gamma(1+\gamma).
    \]
    This is exactly the $r=1$ term in the sum \eqref{eq:wk-sum}, since
    \[
    \frac{\Gamma(1+\gamma)}{\Gamma(2)}\,1^{1-\gamma}=\Gamma(1+\gamma).
    \]
    Therefore \eqref{eq:wk-closed} can be rewritten as \eqref{eq:wk-sum}, completing the proof.
    \end{proof}
    
    Substituting the closed form \eqref{eq:wk-sum} into \eqref{eq:apx-in-wk} gives
    \[
    \apx_k(F)
    =
    (1-\gamma)\frac{\Gamma(k)}{\Gamma(k+1-\gamma)}
    \cdot
    \frac{1}{1-\gamma}\,
    \frac{\Gamma(k+1)}{\Gamma(k+1+\gamma)}
    \sum_{r=1}^{k}\frac{\Gamma(r+\gamma)}{\Gamma(r+1)}\,r^{1-\gamma},
    \]
    which simplifies to the stated formula
    \[
    \apx_k(F)
    =
    \frac{\Gamma(k)\Gamma(k+1)}{\Gamma(k+1-\gamma)\Gamma(k+1+\gamma)}
    \sum_{r=1}^{k}\frac{\Gamma(r+\gamma)}{\Gamma(r+1)}\,r^{1-\gamma}.
    \]
    
    \subsubsection*{Step 2: $\gamma\notin(0,1)$}
    When $\gamma\notin(0,1)$, the asymptotic competitive ratio equals $1$.

    In fact, if $\gamma=0$, Theorem~\ref{thm:prophet} yields
    \[
    \mu_{n,k}
    =
    k\,F^{\leftarrow}\!\Bigl(1-\frac{1}{n}\Bigr)
    +
    a_n\sum_{r=1}^k\bigl(-\psi(r)\bigr),
    \qquad n\to\infty,
    \]
    where $a_n=o\!\bigl(F^{\leftarrow}(1-\frac{1}{n})\bigr)$. Hence
    \[
    \mu_{n,k}
    =
    k\,F^{\leftarrow}\!\Bigl(1-\frac{1}{n}\Bigr)\,[1+o(1)].
    \]
    On the other hand, the $\gamma=0$ version of the CE value asymptotics from
    Proposition~\ref{prop:ce-value-gamma-eq-0} gives
    \[
    \vce(n,k)
    =
    k\,F^{\leftarrow}\!\Bigl(1-\frac{1}{n}\Bigr)\,[1+o(1)].
    \]
    Therefore $\apx_k(F)=\lim_{n\to\infty}\vce(n,k)/\mu_{n,k}=1$.
    
    If $\gamma<0$, let $x^\ast<\infty$ be the right endpoint of $F$. Theorem~\ref{thm:prophet} gives
    \[
    \mu_{n,k}
    =
    k\,x^\ast
    -
    \frac{\Gamma(k+1-\gamma)}{(1-\gamma)\Gamma(k)}
    \Bigl(x^\ast-F^{\leftarrow}\!\Bigl(1-\frac{1}{n}\Bigr)\Bigr)
    \,[1+o(1)].
    \] 
    Proposition~\ref{prop:ce-value-gamma-le-0} yields the same leading term $kx^\ast$ for $\vce(n,k)$, and $F^{\leftarrow}(1-\frac{1}{n}) \to x^\ast$ as $n\to \infty$. Dividing the two expansions implies
    \[
    \frac{\vce(n,k)}{\mu_{n,k}}=1+o(1),
    \qquad n\to\infty,
    \]
    and hence $\apx_k(F)=1$.
    
    This establishes the second line of Theorem~\ref{thm:acr-ce}.
    
\end{proof}

\subsection{Proof of Proposition~\ref{prop:ce-large-k}}

\begin{proof}
Fix $\gamma\in(0,1)$ and write the expression in Theorem~\ref{thm:acr-ce} as
\begin{equation}\label{eq:ce-factorization}
\apx_k(F)=P_k(\gamma)\,S_k(\gamma),
\qquad
P_k(\gamma):=\frac{\Gamma(k)\Gamma(k+1)}{\Gamma(k+1-\gamma)\Gamma(k+1+\gamma)},
\qquad
S_k(\gamma):=\sum_{r=1}^k a_r(\gamma),
\end{equation}
where
\[
a_r(\gamma):=\frac{\Gamma(r+\gamma)}{\Gamma(r+1)}\,r^{1-\gamma}.
\]
Let $c:=\gamma(1-\gamma)\in(0,1/4]$.

\begin{lemma}\label{lem:ce-prefactor}
For $\gamma\in(0,1)$,
\[
P_k(\gamma)=\frac{1}{k}\Bigl(1+O\!\left(\frac{1}{k}\right)\Bigr),
\qquad k\to\infty.
\]
More precisely,
\[
P_k(\gamma)=\frac{1}{k}\left(1-\frac{\gamma^2}{k}+O\!\left(\frac{1}{k^2}\right)\right).
\]
\end{lemma}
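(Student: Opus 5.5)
Write $P_k(\gamma)$ as a product of two Gamma ratios, simplify with $\Gamma(k+1)=k\Gamma(k)$, and expand each ratio with the same Stirling argument used in Lemma~\ref{lem:Gamma-ratio-exp}. Concretely,
\[
P_k(\gamma)=\frac{1}{k}\cdot k^{2}\,\frac{\Gamma(k)}{\Gamma(k+1-\gamma)}\cdot\frac{\Gamma(k)}{\Gamma(k+1+\gamma)},
\]
and the proof reduces to the general expansion
\[
\frac{\Gamma(k+\alpha)}{\Gamma(k+\beta)}=k^{\alpha-\beta}\left(1+\frac{(\alpha-\beta)(\alpha+\beta-1)}{2k}+O\!\left(\frac1{k^2}\right)\right),
\]
valid for fixed real $\alpha,\beta$. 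This comes from subtracting the Stirling series for $\log\Gamma(k+\alpha)$ and $\log\Gamma(k+\beta)$, exactly as in the proof of Lemma~\ref{lem:Gamma-ratio-exp}. That proof is the special case $\alpha=0$, $\beta=a$.

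Applying the expansion with $(\alpha,\beta)=(0,1-\gamma)$ gives
\[
\frac{\Gamma(k)}{\Gamma(k+1-\gamma)}=k^{-(1-\gamma)}\left(1+\frac{\gamma(1-\gamma)}{2k}+O(k^{-2})\right),
\]
which is Lemma~\ref{lem:Gamma-ratio-exp} itself. Applying it with $(\alpha,\beta)=(0,1+\gamma)$ gives
\[
\frac{\Gamma(k)}{\Gamma(k+1+\gamma)}=k^{-(1+\gamma)}\left(1-\frac{\gamma(1+\gamma)}{2k}+O(k^{-2})\right).
\]
The powers of $k$ combine to $k^{2}\cdot k^{-(1-\gamma)}\cdot k^{-(1+\gamma)}=1$. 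Multiplying the two bracketed factors, the $1/k$ coefficient is
\[
\frac{\gamma(1-\gamma)}{2}-\frac{\gamma(1+\gamma)}{2}=-\gamma^2 .
\]
This gives $P_k(\gamma)=\frac1k\bigl(1-\frac{\gamma^2}{k}+O(k^{-2})\bigr)$, and the cruder statement $\frac1k(1+O(1/k))$ follows immediately.

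The only step needing care is the bookkeeping of the $1/k$ term in the general Gamma-ratio expansion. From Stirling, that term equals
\[
\frac{(\alpha-\beta)(\alpha+\beta-1)}{2k}
=\frac{\alpha^2-\alpha-\beta^2+\beta}{2k}.
\]
It arises from the $(z-\tfrac12)\log z$ terms after expanding $\log(1+\alpha/k)$ to second order. The $\frac1{12z}$ terms contribute only $O(k^{-2})$. I would verify this sign against the case in Lemma~\ref{lem:Gamma-ratio-exp}, where $\alpha=0$, $\beta=a$ gives $a(1-a)/2k$, before using it with $\beta=1+\gamma$.

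Since $\gamma\in(0,1)$ is fixed, all $O(\cdot)$ constants depend only on $\gamma$.
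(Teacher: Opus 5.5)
Your proposal is correct and follows essentially the same route as the paper. Both use $\Gamma(k+1)=k\Gamma(k)$ to write $P_k(\gamma)$ as $k^{-1}$ times two normalized Gamma ratios, expand each by the Stirling argument of Lemma~\ref{lem:Gamma-ratio-exp} (with $a=1-\gamma$ and $a=1+\gamma$), and read off the $1/k$ coefficient $\frac{\gamma(1-\gamma)}{2}-\frac{\gamma(1+\gamma)}{2}=-\gamma^2$. Your general $(\alpha,\beta)$ expansion is slightly more careful, since the paper applies Lemma~\ref{lem:Gamma-ratio-exp} with $a=1+\gamma\notin(0,1)$, outside its stated range, although that lemma's proof never uses the restriction.
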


\begin{proof}
Use Lemma~\ref{lem:Gamma-ratio-exp}, 
\[
\frac{\Gamma(k)}{\Gamma(k+1-\gamma)}
=
k^{\gamma-1}\left(1+\frac{\gamma(1-\gamma)}{2k}+O\!\left(\frac{1}{k^2}\right)\right),
\]
and
\begin{align*}
\frac{\Gamma(k+1)}{\Gamma(k+1+\gamma)}
&=k^{-\gamma} \frac{k^{\gamma+1} \Gamma(k)}{\Gamma(k+\gamma+1)}\\
&= k^{-\gamma}\left(1-\frac{\gamma(1+\gamma)}{2k}+O\!\left(\frac{1}{k^2}\right)\right).\\
\end{align*}
Multiplying yields
\[
P_k(\gamma)
=\frac{\Gamma(k)}{\Gamma(k+1-\gamma)}\cdot \frac{\Gamma(k+1)}{\Gamma(k+1+\gamma)}
=
\frac{1}{k}\left(1-\frac{\gamma^2}{k}+O\!\left(\frac{1}{k^2}\right)\right),
\]
as claimed.
\end{proof}

\begin{lemma}\label{lem:ce-summand-expansion}
For each fixed $\gamma\in(0,1)$,
\[
a_r(\gamma)
=
1-\frac{c}{2r}+O\!\left(\frac{1}{r^2}\right),
\qquad r\to\infty,
\]
where $c=\gamma(1-\gamma)$.
\end{lemma}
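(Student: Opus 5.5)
We write $a_r(\gamma)=r^{1-\gamma}\,\Gamma(r+\gamma)/\Gamma(r+1)$ and expand the Gamma ratio with the same Stirling-type argument as in Lemma~\ref{lem:Gamma-ratio-exp}. The only difference is that the shifts are now $\gamma$ and $1$ relative to base point $r$, rather than $0$ and $a$.

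The general expansion we need, which is what Lemma~\ref{lem:Gamma-ratio-exp} in effect establishes, is that for fixed real $\alpha,\beta$,
\[
\log\frac{\Gamma(r+\alpha)}{\Gamma(r+\beta)}=(\alpha-\beta)\log r+\frac{(\alpha-\beta)(\alpha+\beta-1)}{2r}+O\!\left(\frac{1}{r^2}\right),\qquad r\to\infty .
\]
This follows from $\log\Gamma(z)=(z-\tfrac12)\log z-z+\tfrac12\log(2\pi)+O(1/z)$. One applies it at $z=r+\alpha$ and $z=r+\beta$ and expands $\log(1+\alpha/r)=\alpha/r-\alpha^2/(2r^2)+O(r^{-3})$, and likewise for $\beta$. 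Collecting the $1/r$ terms gives $\tfrac{1}{2r}\bigl[(\alpha^2-\alpha)-(\beta^2-\beta)\bigr]$, which equals the coefficient displayed above.

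We apply this with $\alpha=\gamma$ and $\beta=1$. The log-term is $(\gamma-1)\log r$, and it cancels exactly against the prefactor $r^{1-\gamma}$. The $1/r$ coefficient is
\[
\frac{(\gamma-1)(\gamma+1-1)}{2}=-\frac{\gamma(1-\gamma)}{2}=-\frac{c}{2}.
\]
This gives $\log a_r(\gamma)=-c/(2r)+O(r^{-2})$. Exponentiating with $e^x=1+x+O(x^2)$ yields $a_r(\gamma)=1-c/(2r)+O(r^{-2})$. As a consistency check, the same formula with $(\alpha,\beta)=(0,1-\gamma)$ reproduces Lemma~\ref{lem:Gamma-ratio-exp}.

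The main (mild) obstacle is bookkeeping: we must make sure the remainder in Stirling's formula and the Taylor remainders of $\log(1+\alpha/r)$ really contribute only $O(r^{-2})$ after the multiplications by $(r+\alpha-\tfrac12)$. In particular, the $(r+\alpha)\cdot\alpha^2/(2r^2)$ cross term must be tracked to the correct order. If one prefers to avoid this, an alternative is to quote the standard asymptotic
\[
\frac{\Gamma(r+\alpha)}{\Gamma(r+\beta)}=r^{\alpha-\beta}\Bigl(1+\frac{(\alpha-\beta)(\alpha+\beta-1)}{2r}+O(r^{-2})\Bigr)
\]
(Tricomi--Erdélyi) directly with $(\alpha,\beta)=(\gamma,1)$.
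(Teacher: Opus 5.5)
Your proof is essentially the paper's: it also invokes the general expansion $\Gamma(r+a)/\Gamma(r+b)=r^{a-b}\bigl(1+\frac{(a-b)(a+b-1)}{2r}+O(r^{-2})\bigr)$ with $(a,b)=(\gamma,1)$, multiplies by $r^{1-\gamma}$, and your coefficient computation $-c/2$ is correct. One fix is needed if you derive that expansion yourself rather than quoting it: a Stirling remainder of only $O(1/z)$ lets the two remainders differ by $O(1/r)$, which would swamp the $1/r$ term, so you need the sharper form $\frac{1}{12z}+O(z^{-3})$ used in Lemma~\ref{lem:Gamma-ratio-exp}, for which $\frac{1}{12(r+\gamma)}-\frac{1}{12(r+1)}=O(r^{-2})$.
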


\begin{proof}
An extension of Lemma~\ref{lem:Gamma-ratio-exp} would be: for fixed $a,b$,
\[
\frac{\Gamma(k+a)}{\Gamma(k+b)}
=
k^{a-b}\left(1+\frac{(a-b)(a+b-1)}{2k}+O\!\left(\frac{1}{k^2}\right)\right),
\qquad k\to\infty.
\]
Apply the formula with $a=\gamma$ and $b=1$:
\[
\frac{\Gamma(r+\gamma)}{\Gamma(r+1)}
=
r^{\gamma-1}\left(1+\frac{(\gamma-1)\gamma}{2r}+O\!\left(\frac{1}{r^2}\right)\right)
=
r^{\gamma-1}\left(1-\frac{c}{2r}+O\!\left(\frac{1}{r^2}\right)\right),
\]
and then multiply by $r^{1-\gamma}$ to obtain
\[
a_r(\gamma)=\frac{\Gamma(r+\gamma)}{\Gamma(r+1)}\,r^{1-\gamma}
=
1-\frac{c}{2r}+O\!\left(\frac{1}{r^2}\right).
\]
\end{proof}

\begin{lemma}\label{lem:ce-sum-expansion}
As $k\to\infty$,
\[
S_k(\gamma)=k-\frac{c}{2}\log k+O(1),
\qquad c=\gamma(1-\gamma).
\]
\end{lemma}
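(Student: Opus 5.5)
The plan is to sum the expansion of Lemma~\ref{lem:ce-summand-expansion} term by term. That lemma gives, for each fixed $\gamma\in(0,1)$, constants $r_0$ and $C<\infty$ such that
\[
\Bigl|a_r(\gamma)-1+\frac{c}{2r}\Bigr|\le \frac{C}{r^2}\qquad\text{for all } r\ge r_0 .
\]
I will therefore write $a_r(\gamma)=1-\frac{c}{2r}+\rho_r$, where $|\rho_r|\le C/r^2$ for $r\ge r_0$.

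For the finitely many indices $r<r_0$, each $a_r(\gamma)$ is a finite number, since $\Gamma(r+\gamma)$ and $\Gamma(r+1)$ are finite and positive. So $\rho_r$ is also finite for these indices, and their total contribution to the sum is a constant that depends only on $\gamma$.

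Summing from $r=1$ to $k$ gives
\[
S_k(\gamma)=k-\frac{c}{2}H_k+\sum_{r=1}^k\rho_r, \qquad H_k:=\sum_{r=1}^k\frac1r .
\]
I then use two standard facts. First, $H_k=\log k+O(1)$; more precisely, $H_k=\log k+\gamma_E+O(1/k)$. Second, the tail sum satisfies $\sum_{r\ge r_0}|\rho_r|\le C\sum_{r\ge1} r^{-2}<\infty$. Together with the finite contribution from $r<r_0$, this shows $\sum_{r=1}^k\rho_r=O(1)$ uniformly in $k$. Combining the pieces yields $S_k(\gamma)=k-\frac{c}{2}\log k+O(1)$.

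There is no genuine obstacle. The only point needing care is that the $O(1/r^2)$ remainder in Lemma~\ref{lem:ce-summand-expansion} is asymptotic, so it must be converted into an explicit bound valid for $r\ge r_0$, with the early terms handled separately, before summing; this is exactly what the split at $r_0$ does. The resulting $O(1)$ constant depends on $\gamma$, which is all the statement requires. If uniformity in $\gamma$ were wanted, one would instead take the remainder bound from Stirling's formula with explicit error terms. Those error terms are bounded uniformly for $\gamma\in(0,1)$, because the shifts $a=\gamma$ and $b=1$ in the Gamma-ratio expansion stay in a compact set.
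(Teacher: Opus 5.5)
Your proof is correct and is essentially the paper's argument: both sum the expansion $a_r(\gamma)=1-\frac{c}{2r}+O(r^{-2})$ from Lemma~\ref{lem:ce-summand-expansion}, put the terms with $r<r_0$ and the convergent $O(r^{-2})$ tail into an $O(1)$ constant, and use $H_k=\log k+O(1)$. Your explicit treatment of the early indices and your remark on uniformity in $\gamma$ just spell out what the paper leaves implicit.
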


\begin{proof}
By Lemma~\ref{lem:ce-summand-expansion}, there exists $r_0$ such that for all $r\ge r_0$,
\[
a_r(\gamma)=1-\frac{c}{2r}+\epsilon_r,
\qquad\text{with}\qquad
\epsilon_r=O\!\left(\frac{1}{r^2}\right).
\]
Summing from $r=1$ to $k$ gives
\[
S_k(\gamma)
=
k-\frac{c}{2}\sum_{r=1}^k\frac{1}{r}
+\sum_{r=r_0}^k\epsilon_r
+O(1).
\]
Since $\sum_{r=r_0}^\infty \epsilon_r$ converges (because $\epsilon_r=O(r^{-2})$), we have
$\sum_{r=r_0}^k\epsilon_r=O(1)$. Moreover, the harmonic numbers satisfy
$\sum_{r=1}^k \frac{1}{r}=\log k+O(1)$. Therefore,
\[
S_k(\gamma)=k-\frac{c}{2}\log k+O(1),
\]
as claimed.
\end{proof}

Combining Lemma~\ref{lem:ce-prefactor} and Lemma~\ref{lem:ce-sum-expansion} in \eqref{eq:ce-factorization} yields
\[
\apx_k(F)
=
\left(\frac{1}{k}\Bigl(1+O\!\left(\frac{1}{k}\right)\Bigr)\right)
\left(k-\frac{c}{2}\log k+O(1)\right)
=
1-\frac{c}{2}\frac{\log k}{k}+O\!\left(\frac{1}{k}\right),
\qquad k\to\infty,
\]
with $c=\gamma(1-\gamma)$. Hence there exist $k_\ast'$ and $M_1'<\infty$ such that for all $k\ge k_\ast'$,
\[
\left|\apx_k(F)-\left(1-\frac{\gamma(1-\gamma)}{2}\frac{\log k}{k}\right)\right|\le \frac{M_1'}{k}.
\]
For the finitely many indices $1\le k<k_\ast'$, define
\[
M_2':=\max_{1\le k<k_\ast'}
k\left|\apx_k(F)-\left(1-\frac{\gamma(1-\gamma)}{2}\frac{\log k}{k}\right)\right|.
\]
Then $M_2'<\infty$. Taking $M':=\max\{M_1',M_2'\}$ gives \eqref{eq:dp-large-k} for every $k\ge1$.
\end{proof}

\subsection{Proof of Corollary~\ref{cor:ce-large-k}}

\begin{proof}
Fix $\varepsilon>0$. If $\gamma\notin(0,1)$, then by Theorem~\ref{thm:acr-ce} we have $\apx_k(F)=1$ for all $k$,
so the inequality holds trivially for every $k$.

Now assume $\gamma\in(0,1)$. By Proposition~\ref{prop:ce-large-k}, there exists $M'<\infty$ such that for all
$k\ge1$,
\begin{equation}\label{eq:ce-lb-from-prop}
\apx_k(F)\ \ge\ 1-\frac{\gamma(1-\gamma)}{2}\frac{\log k}{k}-\frac{M'}{k}.
\end{equation}
Since $\gamma(1-\gamma)\le \frac14$ for all $\gamma\in(0,1)$, \eqref{eq:ce-lb-from-prop} implies the uniform bound
\begin{equation}\label{eq:ce-lb-unif}
\apx_k(F)\ \ge\ 1-\frac{1}{8}\frac{\log k}{k}-\frac{M'}{k}.
\end{equation}
Choose $k'_\varepsilon$ sufficiently large such that
\[
\frac{M'}{k}\le \frac{\varepsilon}{8}\frac{\log k}{k}
\qquad\text{for all }k\ge k'_\varepsilon,
\]
specifically one may take $k'_\varepsilon:=\left\lceil \exp(8M'/\varepsilon)\right\rceil$. Then for all $k\ge k'_\varepsilon$,
\[
\frac{1}{8}\frac{\log k}{k}+\frac{M'}{k}
\le \frac{1+\varepsilon}{8}\frac{\log k}{k}.
\]
Combining with \eqref{eq:ce-lb-unif} yields
\[
\apx_k(F)\ \ge\ 1-\frac{1+\varepsilon}{8}\frac{\log k}{k},
\qquad \forall k\ge k'_\varepsilon,
\]
which is the desired claim. The bound is independent of $\gamma$ because it uses only the universal inequality
$\gamma(1-\gamma)\le 1/4$.
\end{proof}

\subsection{Proof of Theorem~\ref{thm:regret-ce-dp}}

\begin{lemma}\label{lem:dp-sk-constant}
Let $\gamma\in(0,1)$ and let $\{v_k\}_{k\ge1}$ be the sequence in Theorem~\ref{thm:acr-dp}. Define
\[
s_k:=(1-\gamma)\,v_k^{1/(1-\gamma)},\qquad k\ge1.
\]
There exists a constant $d^{\mathrm{DP}}_\gamma\in\R$ such that
\[
s_k = k-\frac{\gamma}{2}\log k + d^{\mathrm{DP}}_\gamma + o(1),
\qquad k\to\infty.
\]
\end{lemma}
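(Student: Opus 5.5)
We sharpen the argument of Lemma~\ref{lem:sk-asymp}: there we showed $s_k = k - \frac{\gamma}{2}\log k + O(1)$, and we now want the $O(1)$ remainder to converge. The plan is to write $s_k$ as a telescoping sum of increments and show that, once the drift $1-\frac{\gamma}{2k}$ is subtracted, the leftover terms form an absolutely summable series.

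First, start from the exact identity \eqref{eq:sk-exact-lemma},
\[
s_{k-1}=s_k\Bigl(1-\tfrac{1-\gamma}{s_k}\Bigr)^{1/(1-\gamma)}.
\]
The Taylor bound \eqref{eq:taylor-lemma} then yields the increment expansion
\[
s_k-s_{k-1}=1-\frac{\gamma}{2s_k}+\rho_k,\qquad |\rho_k|\le C' s_k^{-2}\quad (k\ge k_0).
\]
The constant $C'$ depends only on $\gamma$, since the remainder $R(u)$ satisfies $|R(u)|\le Cu^3$ for $u\le 1/2$. By Lemma~\ref{lem:sk-asymp} we have $s_k\ge c_0 k$ for large $k$, so $\sum_k |\rho_k|<\infty$.

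Second, compare $1/s_k$ with $1/k$. Lemma~\ref{lem:sk-asymp} gives $s_k = k + O(\log k)$, hence
\[
\Bigl|\frac{1}{s_k}-\frac{1}{k}\Bigr| = \frac{|k-s_k|}{k\,s_k} = O\!\left(\frac{\log k}{k^2}\right),
\]
which is summable. Define
\[
\eta_k := s_k - s_{k-1} - 1 + \frac{\gamma}{2k} = -\frac{\gamma}{2}\Bigl(\frac{1}{s_k}-\frac{1}{k}\Bigr) + \rho_k .
\]
Then $\sum_{k} |\eta_k| < \infty$.

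Third, sum from a fixed $k_2$ up to $k$:
\[
s_k = s_{k_2} + (k-k_2) - \frac{\gamma}{2}\sum_{j=k_2+1}^{k}\frac{1}{j} + \sum_{j=k_2+1}^{k}\eta_j .
\]
Use $\sum_{j=1}^k 1/j = \log k + \gamma_E + o(1)$, where $\gamma_E$ is the Euler--Mascheroni constant. Since $\eta_j$ is summable, $\sum_{j=k_2+1}^{k}\eta_j = \sum_{j>k_2}\eta_j - \sum_{j>k}\eta_j = \sum_{j>k_2}\eta_j + o(1)$. Collecting the constants gives
\[
d^{\mathrm{DP}}_\gamma = s_{k_2} - k_2 - \frac{\gamma}{2}\Bigl(\gamma_E - \sum_{j=1}^{k_2}\tfrac1j\Bigr) + \sum_{j>k_2}\eta_j,
\]
which is finite and does not depend on $k$. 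Its independence of the choice of $k_2$ follows automatically, because $s_k - k + \frac{\gamma}{2}\log k$ itself converges.

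The only delicate point is the summability of $1/s_k - 1/k$. A bare $O(1/k)$ bound on this difference would not suffice, since it is not summable. The bound must therefore come from the already-established $s_k = k + O(\log k)$ in Lemma~\ref{lem:sk-asymp}, which yields $O(\log k/k^2)$. Everything else is a direct bookkeeping refinement of the earlier proof.
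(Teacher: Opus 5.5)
Your proof is correct and is essentially the paper's argument: the paper sets $t_k := s_k - k + \frac{\gamma}{2}\log k$ and shows $t_k - t_{k-1} = O(\log k/k^2)$, using the estimate $1/s_k = 1/k + O(\log k/k^2)$ inherited from Lemma~\ref{lem:sk-asymp}; this is the same summability you isolate in $\eta_k$. The only difference is cosmetic: you telescope $s_k$ itself and invoke $H_k = \log k + \gamma_E + o(1)$, whereas the paper absorbs the logarithm into $t_k$ via $\log k - \log(k-1) = 1/k + O(1/k^2)$, so it gets convergence without writing out an explicit formula for the constant.
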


\begin{proof}
From Lemma~\ref{lem:sk-asymp}, we have
\begin{equation}\label{eq:sk-first-order}
\frac{s_k}{k}
=
1-\frac{\gamma}{2}\frac{\log k}{k}+O\!\left(\frac{1}{k}\right),
\qquad\text{equivalently}\qquad
s_k=k-\frac{\gamma}{2}\log k+O(1),
\end{equation}
and
\begin{equation}\label{eq:sk-increment}
s_k-s_{k-1}
=
1-\frac{\gamma}{2s_k}+O\!\left(\frac{1}{s_k^2}\right).
\end{equation}
It remains to upgrade the $O(1)$ remainder in \eqref{eq:sk-first-order} to convergence.

Define
\[
t_k:=s_k-k+\frac{\gamma}{2}\log k.
\]
Using \eqref{eq:sk-increment} and $\log k-\log(k-1)=\frac{1}{k}+O(\frac{1}{k^2})$ yields
\begin{align*}
t_k-t_{k-1}
&=\bigl((s_k-s_{k-1})-1\bigr)+\frac{\gamma}{2}\bigl(\log k-\log(k-1)\bigr) \\
&=-\frac{\gamma}{2s_k}+\frac{\gamma}{2k}+O\!\left(\frac{1}{s_k^2}\right)+O\!\left(\frac{1}{k^2}\right).
\end{align*}
From \eqref{eq:sk-first-order}, $s_k=k+O(\log k)$, so
\[
\frac{1}{s_k}=\frac{1}{k}+O\!\left(\frac{\log k}{k^2}\right),
\qquad
\frac{1}{s_k^2}=O\!\left(\frac{1}{k^2}\right).
\]
Therefore
\[
t_k-t_{k-1}=O\!\left(\frac{\log k}{k^2}\right),
\]
and since $\sum_{k\ge2} (\log k)/k^2<\infty$, the sequence $\{t_k\}$ is convergent.
Let $d^{\mathrm{DP}}_\gamma:=\lim_{k\to\infty} t_k$. Then
\[
s_k = k-\frac{\gamma}{2}\log k + d^{\mathrm{DP}}_\gamma + o(1),
\]
which proves the lemma.
\end{proof}

\begin{lemma}\label{lem:ce-sum-constant}
Fix $\gamma\in(0,1)$ and set $c:=\gamma(1-\gamma)$. Define the following as the same in ~\eqref{eq:ce-factorization},
\[
S_k(\gamma):=\sum_{r=1}^k a_r(\gamma),
\]
where
\[
a_r(\gamma):=\frac{\Gamma(r+\gamma)}{\Gamma(r+1)}\,r^{1-\gamma}
\]
There exists a constant $d^{\mathrm{CE}}_\gamma\in\R$ such that
\[
S_k(\gamma)=k-\frac{c}{2}\log k + d^{\mathrm{CE}}_\gamma + o(1),
\qquad k\to\infty.
\]
% Equivalently, the sequence
% \[
% y_k:=\frac{1}{1-\gamma}S_k(\gamma)
% \]
% satisfies
% \[
% y_k=\frac{k}{1-\gamma}-\frac{\gamma}{2}\log k+\frac{d^{\mathrm{CE}}_\gamma}{1-\gamma}+o(1).
% \]
\end{lemma}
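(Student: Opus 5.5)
The plan is to mirror the argument of Lemma~\ref{lem:dp-sk-constant}: subtract the known divergent part of $S_k(\gamma)$ and show that what remains forms a sequence with summable increments. Set
\[
t_k:=S_k(\gamma)-k+\frac{c}{2}\log k,\qquad k\ge1 .
\]
Since $S_k(\gamma)-S_{k-1}(\gamma)=a_k(\gamma)$, its increments are
\[
t_k-t_{k-1}=\bigl(a_k(\gamma)-1\bigr)+\frac{c}{2}\bigl(\log k-\log(k-1)\bigr),\qquad k\ge2 .
\]

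Next we estimate each bracket. Lemma~\ref{lem:ce-summand-expansion} gives
\[
a_k(\gamma)-1=-\frac{c}{2k}+O\!\left(\frac{1}{k^2}\right).
\]
The elementary expansion $\log k-\log(k-1)=-\log(1-1/k)=\frac1k+O(1/k^2)$ gives
\[
\frac{c}{2}\bigl(\log k-\log(k-1)\bigr)=\frac{c}{2k}+O\!\left(\frac{1}{k^2}\right).
\]
The $\pm\frac{c}{2k}$ terms cancel exactly, leaving $t_k-t_{k-1}=O(k^{-2})$.

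Because $\sum_k k^{-2}<\infty$, the series $\sum_{k\ge2}(t_k-t_{k-1})$ converges absolutely. Hence $t_k=t_1+\sum_{j=2}^k(t_j-t_{j-1})$ has a finite limit, and we define $d^{\mathrm{CE}}_\gamma:=\lim_{k\to\infty}t_k$. Rearranging the definition of $t_k$ then gives $S_k(\gamma)=k-\frac{c}{2}\log k+d^{\mathrm{CE}}_\gamma+o(1)$, which is the claim. If an explicit form is wanted, the constant is
\[
d^{\mathrm{CE}}_\gamma=a_1(\gamma)-1+\sum_{k\ge2}\Bigl[a_k(\gamma)-1+\tfrac{c}{2}\log\tfrac{k}{k-1}\Bigr].
\]

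The main obstacle is making the $O(r^{-2})$ remainder in Lemma~\ref{lem:ce-summand-expansion} rigorous. That lemma rests on the Gamma-ratio expansion $\Gamma(r+a)/\Gamma(r+b)=r^{a-b}\bigl(1+\frac{(a-b)(a+b-1)}{2r}+O(r^{-2})\bigr)$, which is only sketched as an extension of Lemma~\ref{lem:Gamma-ratio-exp}. I would justify it the same way as that lemma: apply Stirling's formula with remainder for $\log\Gamma$ at $z=r+\gamma$ and $z=r+1$, subtract, use $\log(1+x/r)=x/r-x^2/(2r^2)+O(r^{-3})$, and exponentiate. This yields a genuine $O(r^{-2})$ error for fixed $\gamma$, which is all the summability step requires; uniformity in $\gamma$ is not needed since $\gamma$ is fixed.
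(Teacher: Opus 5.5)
Your proof is correct and is essentially the paper's argument. Both rest on $a_r(\gamma)=1-\frac{c}{2r}+O(r^{-2})$ and the summability of the $O(r^{-2})$ remainder; the paper sums the decomposition and cites $H_k=\log k+\gamma_{\mathrm{E}}+o(1)$, whereas you telescope $t_k$ against $\log\frac{k}{k-1}=\frac1k+O(k^{-2})$, as in Lemma~\ref{lem:dp-sk-constant}. Your bookkeeping is also cleaner: the paper writes the remainder as $b_r:=a_r(\gamma)-1+\frac{c}{r}$ and the limit for $S_k(\gamma)-k+c\log k$, dropping the factor $\frac12$ in both places, while you carry $\frac{c}{2}$ correctly throughout.
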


\begin{proof}
From Lemma ~\ref{lem:ce-sum-expansion}, 
% Let $P_k(\gamma):=\dfrac{\Gamma(k)\Gamma(k+1)}{\Gamma(k+1-\gamma)\Gamma(k+1+\gamma)}$, so that
% Theorem~\ref{thm:acr-ce} gives
% \[
% \apx_k(F)=P_k(\gamma)\,S_k(\gamma).
% \]
% By Proposition~\ref{prop:ce-large-k}, for some $M'<\infty$ and all $k\ge1$,
% \[
% \apx_k(F)=1-c\frac{\log k}{k}+\eta_k,
% \qquad\text{with}\qquad |\eta_k|\le \frac{M'}{k}.
% \]
% A gamma-ratio expansion (via Stirling) yields
% \[
% k\,P_k(\gamma)=1+O\!\left(\frac{1}{k}\right),
% \qquad\text{hence}\qquad
% \frac{1}{P_k(\gamma)}=k\Bigl(1+O\!\left(\frac{1}{k}\right)\Bigr).
% \]
Therefore,
\begin{equation}\label{eq:Sk-O1}
S_k(\gamma)
= k-\frac{c}{2}\log k + O(1).
\end{equation}

It remains to upgrade \eqref{eq:Sk-O1} to convergence.

Using again the gamma-ratio expansion,
\[
\frac{\Gamma(r+\gamma)}{\Gamma(r+1)}
=
r^{\gamma-1}\left(1-\frac{\gamma(1-\gamma)}{2r}+O\!\left(\frac{1}{r^2}\right)\right),
\qquad r\to\infty,
\]
so
\[
a_r(\gamma)=1-\frac{c}{2r}+O\!\left(\frac{1}{r^2}\right),
\qquad r\to\infty.
\]
Define the remainder
\[
b_r:=a_r(\gamma)-1+\frac{c}{r}.
\]
Then $b_r=O(r^{-2})$, hence $\sum_{r=1}^\infty b_r$ converges. 
\[
S_k(\gamma)
=\sum_{r=1}^k\left(1-\frac{c}{2r}+b_r\right)
= k-\frac{c}{2}H_k+\sum_{r=1}^k b_r.
\]
Since $H_k=\log k+\gamma_{\mathrm{E}}+o(1)$ when $k\to \infty$, and $\sum_{r=1}^k b_r$ converges to a finite limit,
it follows that $S_k(\gamma)-k+c\log k$ converges to some finite constant $d^{\mathrm{CE}}_\gamma$.
\end{proof}

\begin{proof}[Proof of Theorem~\ref{thm:regret-ce-dp}]

Assume first that $\gamma\in(0,1)$. For each fixed $k$, the DP and CE value asymptotics give
\begin{equation}\label{eq:dp-ce-fixed-k}
\vdp(n,k)=\alpha_k\,U(n)\,[1+o(1)],
\qquad
\vce(n,k)=\beta_k\,U(n)\,[1+o(1)],
\qquad n\to\infty,
\end{equation}
where
\[
\alpha_k=\frac{v_k}{(1-\gamma)^\gamma},
\qquad
\beta_k=w_k,
\]
with $\{v_k\}$ as in Theorem~\ref{thm:acr-dp} and $\{w_k\}$ the CE coefficient sequence in ~\ref{eq:ck-recursion}.
Subtracting in \eqref{eq:dp-ce-fixed-k} yields, for each fixed $k$,
\[
\lim_{n\to\infty}\frac{\vdp(n,k)-\vce(n,k)}{k^{-\gamma}U(n)}
=
k^\gamma(\alpha_k-\beta_k).
\]
Therefore, the theorem reduces to showing that $\lim_{k\to\infty}k^\gamma(\alpha_k-\beta_k)$ exists and is finite.

Recall $s_k:=(1-\gamma)\,v_k^{1/(1-\gamma)}$, so that
\[
\alpha_k=\frac{v_k}{(1-\gamma)^\gamma}=\frac{s_k^{1-\gamma}}{1-\gamma}.
\]
By Lemma~\ref{lem:dp-sk-constant}, there exists $d^{\mathrm{DP}}_\gamma\in\R$ such that
\[
s_k = k-\frac{\gamma}{2}\log k + d^{\mathrm{DP}}_\gamma + o(1).
\]
Let $h_k:=-(\gamma/2)\log k+d^{\mathrm{DP}}_\gamma+o(1)$, so that $s_k=k+h_k$ with $h_k=O(\log k)$.
Then
\[
s_k^{1-\gamma}
=
k^{1-\gamma}\Bigl(1+\frac{h_k}{k}\Bigr)^{1-\gamma}
=
k^{1-\gamma}+(1-\gamma)k^{-\gamma}h_k+o(k^{-\gamma}),
\]
because $\frac{h_k}{k}\to0$ and $k^{1-\gamma}\bigl(\frac{h_k}{k}\bigr)^2=o(k^{-\gamma})$.
Dividing by $(1-\gamma)$ gives
\begin{equation}\label{eq:alpha-exp}
\alpha_k
=
\frac{k^{1-\gamma}}{1-\gamma}
-\frac{\gamma}{2}\,k^{-\gamma}\log k
+d^{\mathrm{DP}}_\gamma\,k^{-\gamma}
+o\!\bigl(k^{-\gamma}\bigr).
\end{equation}
Recall the definition in Lemma~\ref{lem:wk-closed-form},
\[
y_k:=\frac{\Gamma(k+1+\gamma)}{\Gamma(k+1)}\,w_k,
\]
and
\begin{equation}\label{eq:y-sum}
y_k
=
\frac{1}{1-\gamma}\sum_{r=1}^k a_r(\gamma),
\qquad
a_r(\gamma):=\frac{\Gamma(r+\gamma)}{\Gamma(r+1)}\,r^{1-\gamma}.
\end{equation}
Lemma~\ref{lem:ce-sum-constant} shows that there exists $d^{\mathrm{CE}}_\gamma\in\R$ such that
\[
\sum_{r=1}^k a_r(\gamma)=k-\frac{\gamma(1-\gamma)}{2}\log k+d^{\mathrm{CE}}_\gamma+o(1).
\]
Combining this with \eqref{eq:y-sum}
yields the existence of $\tilde d^{\mathrm{CE}}_\gamma\in\R$ such that
\begin{equation}\label{eq:y-asymp}
y_k
=
\frac{k}{1-\gamma}-\frac{\gamma}{2}\log k+\tilde d^{\mathrm{CE}}_\gamma+o(1).
\end{equation}
Next, use the gamma-ratio expansion
\begin{equation}\label{eq:Gamma-ratio-ce}
\frac{\Gamma(k+1)}{\Gamma(k+1+\gamma)}
=
k^{-\gamma}\Bigl(1+O\!\left(\frac{1}{k}\right)\Bigr),
\qquad k\to\infty.
\end{equation}
Since $w_k=y_k\,\Gamma(k+1)/\Gamma(k+1+\gamma)$, \eqref{eq:y-asymp}--\eqref{eq:Gamma-ratio-ce} give
\begin{equation}\label{eq:beta-exp}
\beta_k=w_k
=
\frac{k^{1-\gamma}}{1-\gamma}
-\frac{\gamma}{2}\,k^{-\gamma}\log k
+\tilde d^{\mathrm{CE}}_\gamma\,k^{-\gamma}
+o\!\bigl(k^{-\gamma}\bigr).
\end{equation}
Subtracting \eqref{eq:beta-exp} from \eqref{eq:alpha-exp} yields
\[
\alpha_k-\beta_k
=
\bigl(d^{\mathrm{DP}}_\gamma-\tilde d^{\mathrm{CE}}_\gamma\bigr)\,k^{-\gamma}
+o\!\bigl(k^{-\gamma}\bigr),
\qquad k\to\infty.
\]
Hence the limit exists and
\[
\lim_{k\to\infty}k^\gamma(\alpha_k-\beta_k)
=
d^{\mathrm{DP}}_\gamma-\tilde d^{\mathrm{CE}}_\gamma
=:c_\gamma.
\]
Combining with the reduction for $n$ gives
\[
\lim_{k\to\infty}\ \lim_{n\to\infty}\ 
\frac{\vdp(n,k)-\vce(n,k)}{k^{-\gamma}U(n)}
=
c_\gamma,
\qquad \gamma\in(0,1).
\]
Since by definition $\vdp(n,k)\ge \vce(n,k)$ for all $(n,k)$, we have $\alpha_k\ge \beta_k$ for all $k$ and thus $c_\gamma\ge0$, and by numerical validation, we infer $c_\gamma > 0;$ see Figure~\ref{fig:third_order}.

\begin{figure}[htbp]
  \centering
  \begin{subfigure}[b]{0.48\textwidth}
    \centering
    \includegraphics[height=5cm, keepaspectratio]{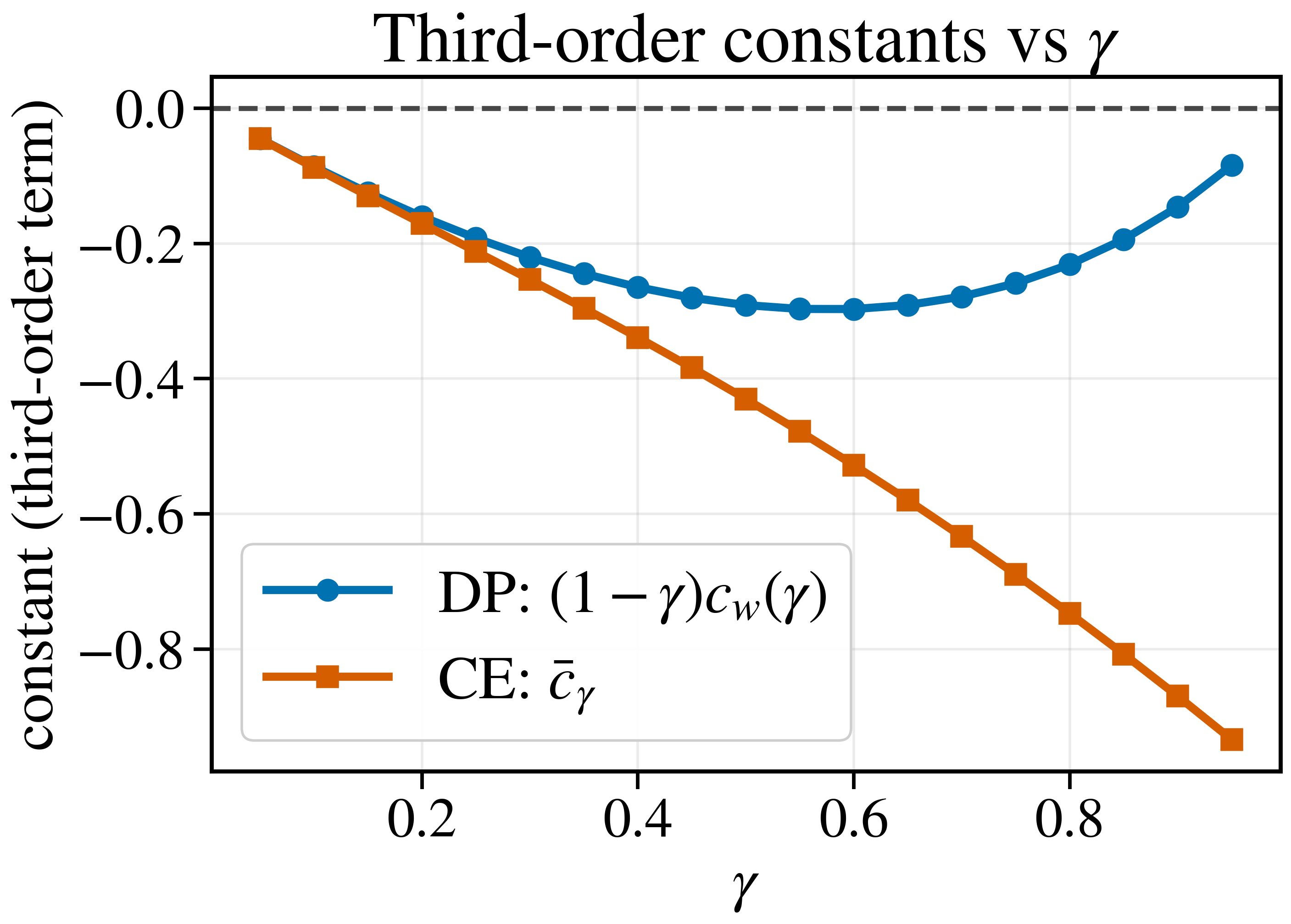}
    \caption{$(1-\gamma)c_\omega(\gamma)$ and $\bar c_\gamma$}
  \end{subfigure}
  \hfill
  \begin{subfigure}[b]{0.48\textwidth}
    \centering
    \includegraphics[height=5cm, keepaspectratio]{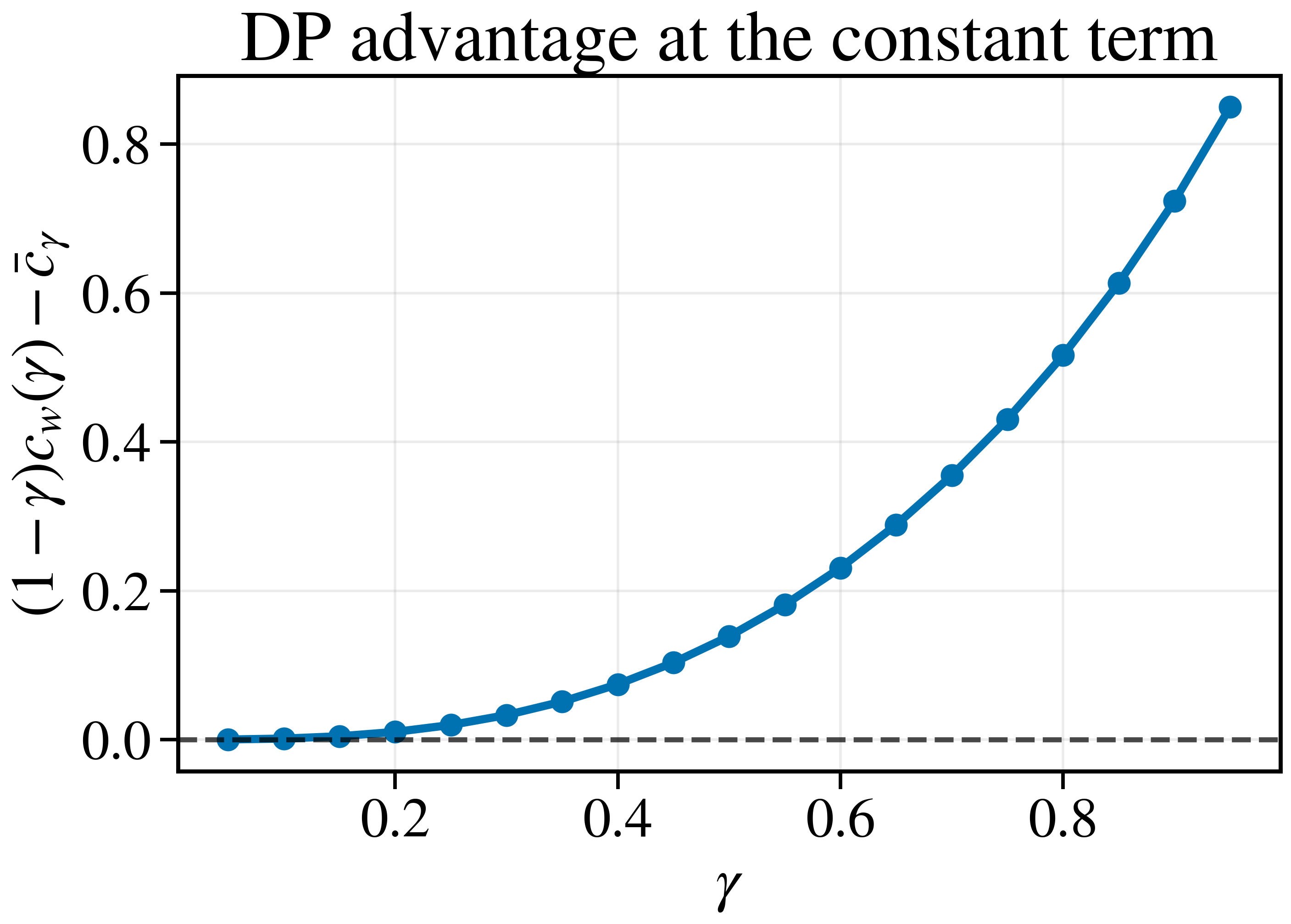}
    \caption{$(1-\gamma)c_\omega(\gamma) -\bar c_\gamma$}
  \end{subfigure}
  \caption{Finer Comparison between DP and CE}
  \label{fig:third_order}
\end{figure}

When $\gamma=0$, the DP and CE fixed-$k$ asymptotics are governed by the same limiting recursion (and the same
initial condition), hence $\alpha_k=\beta_k$ for every fixed $k$ and therefore $c_0=0$.

If $\gamma\in(0,1)$, then $U(n)\to\infty$ as $n\to\infty$. Moreover, the previous steps imply that for large $k$,
\[
\lim_{n\to\infty}\frac{\vdp(n,k)-\vce(n,k)}{U(n)}
=
\alpha_k-\beta_k
=\Theta(k^{-\gamma}),
\]
so as $n\to \infty$,
\[
\vdp(n,k) - \vce(n,k) =\Theta\left( \left(\frac{n}{k}\right)^{\gamma}\right),
\]
and therefore for any fixed sufficiently large $k$ with $\alpha_k>\beta_k$, the regret $\vdp(n,k)-\vce(n,k)$ diverges as
$n\to\infty$.

If $\gamma <0$, they will converge to the same leading term $kx^\ast$, and
\[
\vdp(n,k) - \vce(n,k)  \longrightarrow0.
\]
    
\end{proof}

\subsection{Proof of Proposition~\ref{prop:regret-ce-dp-pareto}}

\begin{proof}
    Fix $\gamma\in(0,1)$ and let $F$ be the Pareto distribution on $[1,\infty)$ with tail
    \[
    \bar F(x)=x^{-1/\gamma},\qquad x\ge1.
    \]
    Then $F$ satisfies the extreme value condition with index $\gamma$ and has finite mean since $1/\gamma>1$.
    Its right end-point is $x^\ast=\infty$, and the high quantile function is explicit:
    \[
    U(n):=F^{\leftarrow}\!\Bigl(1-\frac{1}{n}\Bigr)=n^\gamma,\qquad n\ge1.
    \]
    Moreover, for every $t\ge1$ we have the exact tail-integral identity
    \begin{equation}\label{eq:Pareto-tail-int}
    \int_t^\infty \bar F(u)\,du
    =\int_t^\infty u^{-1/\gamma}\,du
    =\frac{\gamma}{1-\gamma}\,t^{1-1/\gamma}
    =\frac{\gamma}{1-\gamma}\,t\,\bar F(t),
    \end{equation}
    which eliminates any approximation error in the recursions.
    
    Consider now the joint regime $n\to\infty$ with $k=k(n)=o(n)$. For this Pareto choice of $F$, the DP recursion
    (Lemma~\ref{lem:dp-recursive}) and the CE recursion (Lemma~\ref{lem:ce-recursive}) simplify using \eqref{eq:Pareto-tail-int} in a way that is uniform
    whenever $k/n\to0$. Also, for the pareto distribution, whenever $k=o(n)$, the followings hold exactly in Proposition~\ref{prop:dp-value-gamma-ge-0} and ~\ref{prop:ce-value-gamma-ge-0},
    \[
    \frac{U(n/k)}{U(n)} = k^{-\gamma}, \quad \frac{nR(U(n)y)}{U(n)}
    =
    \frac{\gamma}{1-\gamma}y^{-(1-\gamma)/\gamma}
    \]
    which is not true for general distributions with $\gamma \in (0,1)$.
    Therefore, the large-$k$ expansions for the competitive ratios remain valid along any
    sequence with $k=o(n)$:
    there exist constants $c^{\tt{dp}}_\gamma,c^{\tt{ce}}_\gamma\in\R$ such that
    \[
    \acr_k(F)
    =
    1-\frac{\gamma(1-\gamma)}{2}\frac{\log k}{k}
    +\frac{c^{\tt{dp}}_\gamma}{k}
    +o\!\left(\frac{1}{k}\right),
    \qquad
    \apx_k(F)
    =
    1-\frac{\gamma(1-\gamma)}{2}\frac{\log k}{k}
    +\frac{c^{\tt{ce}}_\gamma}{k}
    +o\!\left(\frac{1}{k}\right),
    \]
    as $n\to\infty$ with $k=k(n)\to\infty$ and $k=o(n)$. Subtracting the two displays yields
    \[
    \acr_k(F)-\apx_k(F)
    =
    \frac{c^{\tt{dp}}_\gamma-c^{\tt{ce}}_\gamma}{k}
    +o\!\left(\frac{1}{k}\right).
    \]
    Since the oracle value satisfies $\mu(n,k) = \Theta\left( k^{1-\gamma}U(n)\right)$ for $\gamma\in(0,1)$, it follows that
    \[
    \vdp(n,k)-\vce(n,k)
    =
    \bigl(\acr_k(F)-\apx_k(F)\bigr)\,\mu(n,k)\,[1+o(1)]
    =
    \Theta\!\left(\frac{1}{k}\right)\cdot \Theta\!\bigl(k^{1-\gamma}U(n)\bigr)
    =
    \Omega\!\left(\Bigl(\frac{n}{k}\Bigr)^\gamma\right),
    \]
    because $U(n)=n^\gamma$ for the Pareto distribution. This proves the proposition.
\end{proof}

\section{Proofs of results in Section \ref{sec:proof}} 

\subsection{Proof of Lemma \ref{lem:dp-recursive}}
\begin{proof}
    Let \[\tau_{n,k} := \vdp(n-1,k) - \vdp(n-1,k-1).\]
    Conditioning on the first observation $X_1$, the dynamic program compares the value of accepting versus rejecting:
    \[
    \text{accept: } X_1+\vdp(n-1,k-1),
    \qquad
    \text{reject: } \vdp(n-1,k).
    \]
    Thus the optimal policy accepts $X_1$ if and only if $X_1\ge \tau_{n,k}$. Therefore,
    \begin{align}
    \vdp(n,k)
    &=\mathbb{E}\Big[\,\vdp(n-1,k)\mathbf 1\{X_1<\tau_{n,k}\}
    +\bigl(X_1+\vdp(n-1,k-1)\bigr)\mathbf 1\{X_1\ge \tau_{n,k}\}\Big]\notag\\
    &=\vdp(n-1,k)
    +\mathbb{E}\Big[\bigl(X_1-\tau_{n,k}\bigr)\mathbf 1\{X_1\ge \tau_{n,k}\}\Big]\notag\\
    &=\vdp(n-1,k)
    +\mathbb{E}\bigl[(X_1-\tau_{n,k})_+\bigr].
    \label{eq:GM-rec-aux}
    \end{align}
    To rewrite the last expectation, use the standard tail-integral identity: for any $\tau\in[0,x^\ast)$,
    \begin{align}
    \mathbb{E}[(X-\tau)_+]
    &=\int_{\tau}^{x^\ast}P(X>u)\,du
    =\int_{\tau}^{x^\ast} \bar F(u)\,du.
    \label{eq:tail-integral-identity}
    \end{align}
    Applying \eqref{eq:tail-integral-identity} to \eqref{eq:GM-rec-aux} yields
    \[
    \vdp(n,k)
    =
    \vdp(n-1,k)
    +
    \int_{\tau_{n,k}}^{x^\ast}\bar F(u)\,du,
    \]
    which is exactly the desired recursion.
\end{proof}

\subsection{Lemma~\ref{lem:dp-asymptotic-recursion} and the proof}

\begin{lemma}
\label{lem:dp-asymptotic-recursion}
Let \(\gamma\in(0,1)\), let \(U\in\mathcal{RV}_\gamma\), and set
\[
    \beta:=\frac{1-\gamma}{\gamma}.
\]
Suppose that \(R\) is nonnegative and nonincreasing, and that
\begin{equation}
\label{eq:general-R-scaling}
    \frac{nR(U(n)y)}{U(n)}
    \longrightarrow
    \frac{\gamma}{1-\gamma}y^{-\beta}
\end{equation}
locally uniformly for \(y\in(0,\infty)\).

Suppose also that \(V_0(n)\equiv0\), that
\[
    0\leq V_{j-1}(n)\leq V_j(n),
\]
and that, for every fixed \(j\geq1\),
\begin{equation}
\label{eq:general-dp-recursion}
    V_j(n)
    =
    V_j(n-1)
    +
    R\!\left(V_j(n-1)-V_{j-1}(n-1)\right).
\end{equation}
If \(V_j(n)=O(U(n))\) for every fixed \(j\), then
\[
    \frac{V_j(n)}{U(n)}
    \longrightarrow g_j,
\]
where \(g_0=0\), and \(g_j>g_{j-1}\) is the unique solution of
\[
    g_j
    =
    \frac{1}{1-\gamma}
    (g_j-g_{j-1})^{-\beta}.
\]
\end{lemma}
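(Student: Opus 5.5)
We argue by induction on \(j\), and for fixed \(j\) we treat the recursion as a perturbed discrete ODE. Assume \(V_{j-1}(n)/U(n)\to g_{j-1}\); the base case \(j-1=0\) is trivial. Write \(h(n):=V_j(n)/U(n)\), which is bounded by hypothesis. Since \(U\in\mathcal{RV}_\gamma\), the Smooth Variation Theorem lets us replace \(U\) by an asymptotically equivalent \(\widetilde U\) with \(\widetilde U(n-1)/\widetilde U(n)=1-\gamma/n+o(1/n)\), exactly as in the proof of Proposition~\ref{prop:ce-value-gamma-ge-0}. Dividing \eqref{eq:general-dp-recursion} by \(U(n)\) then gives
\begin{equation*}
h(n)-h(n-1)=\frac1n\Bigl[-\gamma h(n-1)+\frac{nR\bigl(U(n)\,y_n\bigr)}{U(n)}+o(1)\Bigr],
\qquad y_n:=h(n-1)-\frac{V_{j-1}(n-1)}{U(n)} .
\end{equation*}
Here \(y_n=h(n-1)-g_{j-1}+o(1)\), using \(U(n-1)\sim U(n)\).

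Set \(\phi(h):=-\gamma h+\frac{\gamma}{1-\gamma}(h-g_{j-1})^{-\beta}\) for \(h>g_{j-1}\). This function is strictly decreasing. It tends to \(+\infty\) as \(h\downarrow g_{j-1}\) and to \(-\infty\) as \(h\to\infty\). Hence \(\phi\) has a unique zero \(g_j>g_{j-1}\), and that zero is exactly the fixed-point equation \(g_j=\frac{1}{1-\gamma}(g_j-g_{j-1})^{-\beta}\).

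By the local uniform convergence \eqref{eq:general-R-scaling}, on any compact set \(y\in[\delta,K]\) the bracket equals \(\phi(h(n-1))+o(1)\). The argument is then the standard one for \(x_n-x_{n-1}=\frac1n[\phi(x_{n-1})+o(1)]\) with decreasing \(\phi\), as in Lemma~\ref{lem:linear-asymptotic-recursion}. Fix \(\eta>0\).
\begin{enumerate}[label=(\roman*)]
\item Whenever \(h(n-1)\ge g_j+\eta\), the drift is at most \(-c_\eta/n\). Since \(\sum 1/n=\infty\), \(h\) cannot stay above \(g_j+\eta\). Each step has size \(O(1/n)\), so once it drops below it can only overshoot by \(o(1)\).
\item Whenever \(h(n-1)\in(g_{j-1}+\delta,\,g_j-\eta]\), the drift is at least \(c_\eta/n\), so by the symmetric argument \(h\) eventually exceeds \(g_j-\eta\) and stays above \(g_j-2\eta\).
\end{enumerate}
Together these give \(\limsup h\le g_j+2\eta\) and \(\liminf h\ge g_j-2\eta\).

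The main obstacle is the region where \(y_n\) is near or below zero. There the local uniform convergence gives no control, and \(R(U(n)y_n)\) may be huge or even involve a negative argument, i.e.\ \(R\) evaluated at a level near \(0\). I would handle this using monotonicity of \(R\) rather than the asymptotic formula, in two steps.
\begin{enumerate}[label=(\alph*)]
\item \emph{Lower bound near \(g_{j-1}\).} For any \(y\le\delta\), monotonicity gives \(nR(U(n)y)/U(n)\ge nR(U(n)\delta)/U(n)\to\frac{\gamma}{1-\gamma}\delta^{-\beta}\). This is large when \(\delta\) is small and pushes \(h\) upward at rate \(\gg 1/n\), so \(\liminf(h(n)-g_{j-1})>0\). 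This uses only \(V_j\ge V_{j-1}\) and that \(R\) is nonincreasing.
\item \emph{Controlling the jump size.} A single step could be large if \(y_n\) were tiny. However, each increment equals \(R(\cdot)\le R(0)=E[X]<\infty\) in our application, and \(E[X]=o(U(n))\). So normalized increments are \(o(1)\) uniformly, which prevents overshoot.
\end{enumerate}
Once \(h\) is trapped in \([g_{j-1}+\delta,K]\), the compact-set argument above applies, completing the induction.
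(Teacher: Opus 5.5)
Your proposal is correct and follows essentially the same route as the paper. The shared steps are the smooth-variation normalization, the drift $\frac1n\bigl[-\gamma h+\frac{\gamma}{1-\gamma}(h-g_{j-1})^{-\beta}+o(1)\bigr]$, a monotonicity-of-$R$ argument that keeps the gap away from zero, and the strictly-decreasing-drift argument using $\sum 1/n=\infty$. By measuring the gap against the fixed limit $g_{j-1}$ rather than $V_{j-1}(n)/U(n)$, you even avoid the paper's auxiliary $O(1/n)$-increment induction hypothesis.

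Two small points. First, $y_n$ should be defined as $(V_j(n-1)-V_{j-1}(n-1))/\widetilde U(n)$, which is automatically nonnegative. Second, your step (b) is unnecessary: once the gap is bounded below, the arguments of $R$ lie in a compact set and all increments are $O(1/n)$. In the lemma's generality, (b) should in any case rest on $R(0)<\infty$ and $U(n)\to\infty$, not on $E[X]$.
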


\begin{proof}
We argue by induction on \(j\). We prove simultaneously that
\[
    \frac{V_j(n)}{U(n)}\longrightarrow g_j
    \qquad\text{and}\qquad
    \frac{V_j(n)}{U(n)}
    -
    \frac{V_j(n-1)}{U(n-1)}
    =
    O\left(\frac1n\right).
\]
The assertion is immediate for \(j=0\).

Assume it holds for \(j-1\). By the Smooth Variation Theorem (see \cite[Theorem~1.8.2]{Bingham_Goldie_Teugels_1987}), there exists an
eventually continuously differentiable function
\(\widetilde U\sim U\) such that
\[
    \frac{n\widetilde U'(n)}{\widetilde U(n)}
    \longrightarrow\gamma.
\]
Consequently, due to Potter's bound \cite[Eq. (1.1.23)]{Borovkov_Borovkov_2008} and the dominated convergence theorem, 
\[ 
\begin{aligned} \log\frac{\widetilde U(n-1)}{\widetilde U(n)} &= -\int_{n-1}^{n} \frac{\widetilde U'(t)}{\widetilde U(t)}\,dt \\ &= -\int_{n-1}^{n} \left(\frac{\gamma}{t}+o\!\left(\frac1t\right)\right)dt = -\frac{\gamma}{n}+o\!\left(\frac1n\right). \end{aligned} 
\]
Therefore, it will further lead to 
\begin{equation}
\label{eq:smooth-U-ratio}
    \frac{\widetilde U(n-1)}{\widetilde U(n)}
    =
    1-\frac{\gamma}{n}
    +o\left(\frac1n\right).
\end{equation}
Since \(\widetilde U\sim U\), both the conclusion of the lemma and
\eqref{eq:general-R-scaling} are unchanged if \(U\) is replaced by
\(\widetilde U\).

Write
\[
    x_{i,n}:=\frac{V_i(n)}{\widetilde U(n)}.
\]
Dividing \eqref{eq:general-dp-recursion} by \(\widetilde U(n)\) gives
\begin{equation}
\label{eq:normalized-dp-recursion}
\begin{aligned}
    x_{j,n}
    &=
    \frac{\widetilde U(n-1)}{\widetilde U(n)}x_{j,n-1}  \\
    &\quad+
    \frac{
        R\!\left(
        \widetilde U(n-1)
        (x_{j,n-1}-x_{j-1,n-1})
        \right)
    }{\widetilde U(n)}.
\end{aligned}
\end{equation}
By assumption, \((x_{j,n})\) is bounded, while the induction hypothesis
gives
\[
    x_{j-1,n}\longrightarrow g_{j-1},
    \qquad
    x_{j-1,n}-x_{j-1,n-1}
    =
    O\left(\frac1n\right).
\]

We first note that the normalized marginal value remains bounded away
from zero: 
\begin{equation}
\label{eq:normalized-gap-positive}
    \liminf_{n\to\infty}
    (x_{j,n}-x_{j-1,n})>0.
\end{equation}
Indeed, let \(d_n:=x_{j,n}-x_{j-1,n}\). From \eqref{eq:smooth-U-ratio},
\eqref{eq:normalized-dp-recursion}, boundedness, and the induction
hypothesis, there exists \(C<\infty\) such that
\[
    d_n-d_{n-1}
    \geq
    -\frac{C}{n}
    +
    \frac{
        R\!\left(
        \widetilde U(n-1)d_{n-1}
        \right)
    }{\widetilde U(n)}
\]
for all sufficiently large \(n\). Choose \(\delta>0\) so small that,
\[
    \frac{\gamma}{2(1-\gamma)}(3\delta)^{-\beta}>C+1.
\]
If \(d_{n-1}\leq2\delta\), then, by the monotonicity of \(R\),
\eqref{eq:general-R-scaling}, and
\eqref{eq:smooth-U-ratio},
\[
    d_n-d_{n-1}>\frac1n
\]
for all sufficiently large \(n\). On the other hand, since \(R\geq0\),
we always have \(d_n-d_{n-1}\geq-C/n\). It follows that \(d_n\) must
eventually ($n \to \infty$) enter, and thereafter remain above, a fixed positive
neighborhood of zero. This proves \eqref{eq:normalized-gap-positive}.

The arguments of \(R\) in \eqref{eq:normalized-dp-recursion} therefore
eventually lie in a compact subset of \((0,\infty)\). Using the locally
uniform convergence in \eqref{eq:general-R-scaling},
\eqref{eq:smooth-U-ratio}, and
\(x_{j-1,n}\to g_{j-1}\), we obtain
\begin{equation}
\label{eq:normalized-drift}
    x_{j,n}-x_{j,n-1}
    =
    \frac1n
    \left[
        -\gamma x_{j,n-1}
        +
        \frac{\gamma}{1-\gamma}
        (x_{j,n-1}-g_{j-1})^{-\beta}
        +
        o(1)
    \right].
\end{equation}

Define
\[
    H_j(x)
    :=
    -\gamma x
    +
    \frac{\gamma}{1-\gamma}
    (x-g_{j-1})^{-\beta},
    \qquad x>g_{j-1}.
\]
The function \(H_j\) is continuous and strictly decreasing, with
\[
    H_j(x)\to+\infty
    \quad\text{as }x\downarrow g_{j-1},
    \qquad
    H_j(x)\to-\infty
    \quad\text{as }x\to\infty.
\]
It therefore has a unique zero with the solution \(g_j>g_{j-1}\).

Since the function $H_j$ is continuous and strictly decreasing, we can obtain, for every \(\varepsilon>0\),
the increments are uniformly positive when
\(x_{j,n-1}\leq g_j-\varepsilon\), and uniformly negative when
\(x_{j,n-1}\geq g_j+\varepsilon\), up to the common factor \(1/n\).
Since \(\sum_n n^{-1}=\infty\) and the increments in
\eqref{eq:normalized-drift} are \(O(1/n)\), the sequence cannot remain
outside \([g_j-\varepsilon,g_j+\varepsilon]\).
Hence $\varepsilon \downarrow 0$ gives,
\[
    x_{j,n}\longrightarrow g_j.
\]
Equation \eqref{eq:normalized-drift} also gives
\[
    x_{j,n}-x_{j,n-1}
    =
    O\left(\frac1n\right),
\]
which completes the induction.

Finally, \(H_j(g_j)=0\) is equivalent to
\[
    g_j
    =
    \frac{1}{1-\gamma}
    (g_j-g_{j-1})^{-\beta}.
\]
Since \(\widetilde U(n)\sim U(n)\), the same limit holds with the
original normalization \(U(n)\).
\end{proof}

\begin{remark}[Weibull counterpart]
\label{rem:weibull_counterpart}
The preceding argument has a direct counterpart when \(\gamma<0\).
Define the endpoint scale
\[
    a(n):=x^*-U(n),
\]
the value deficiency
\[
    D_j(n):=jx^*-\vdp(n,j),
\]
and the transformed tail integral
\[
    \widetilde R(s)
    :=
    \int_0^s \overline F(x^*-u)\,du.
\]
Then \(a\in\mathcal{RV}_\gamma\), the function \(\widetilde R\) is
nonnegative and nondecreasing, and the recursion for the deficiencies
is
\[
    D_j(n)
    =
    D_j(n-1)
    -
    \widetilde R\!\left(
        D_j(n-1)-D_{j-1}(n-1)
    \right).
\]

The proof proceeds as in Lemma~\ref{lem:dp-asymptotic-recursion}, after
replacing \(U\), \(V_j\), and \(R\) by \(a\), \(D_j\), and
\(\widetilde R\), respectively. The two differences are that \(a(n)\)
is decreasing and that the recursion contains a negative increment
involving a nondecreasing function. These two sign changes compensate
each other after normalization.

\end{remark}

\subsection{Proof of Proposition~\ref{prop:dp-value-gamma-le-0}}

\begin{proof}[Proof of Proposition~\ref{prop:dp-value-gamma-le-0}]
Fix \(k\geq1\). Since \(\gamma<0\), the distribution \(F\) belongs to
the Weibull domain and has a finite right endpoint \(x^*<\infty\).
Lemma~\ref{lem:dp-recursive} gives
\begin{equation}
\label{eq:dp-recursion-weibull}
\vdp(n,k)
=
\vdp(n-1,k)
+
R(\tau_{n,k}),
\qquad
R(t):=\int_t^{x^*}\overline F(u)\,du,
\end{equation}
where
\[
\tau_{n,k}
:=
\vdp(n-1,k)
-
\vdp(n-1,k-1).
\]

Define
\[
U(n):=F^{\leftarrow}\left(1-\frac1n\right),
\qquad
a(n):=x^*-U(n),
\]
and
\[
h_k(n)
:=
\frac{kx^*-\vdp(n,k)}{a(n)}.
\]
Since \(F\in\mathcal D_\gamma\) with \(\gamma<0\), the Weibull
domain-of-attraction characterization gives
\[
a(n) \in\mathcal{RV}_\gamma
\]
and, locally uniformly for \(y>0\),
\[
n\overline F\bigl(x^*-a(n)y\bigr)
\longrightarrow
y^{-1/\gamma};
\]
since $\bar F\in \mathcal{RV}_{-1/\gamma}$, see \cite[Theorem~1.2.1 and Corollary~1.2.10]
{deHaan2006extreme}.

For \(s\geq0\), set
\[
\widetilde R(s)
:=
R(x^*-s)
=
\int_0^s\overline F(x^*-u)\,du.
\]
Since \(s\mapsto\overline F(x^*-s)\) is regularly varying at zero with
index \(-1/\gamma\), Karamata's theorem at zero gives
\[
\widetilde R(s)
\sim
\frac{-\gamma}{1-\gamma}\,
s\overline F(x^*-s),
\qquad s\downarrow0.
\]
Consequently,
\begin{equation}
\label{eq:weibull-integral-scaling}
\frac{n\widetilde R(a(n)y)}{a(n)}
\longrightarrow
\frac{-\gamma}{1-\gamma}
y^{-(1-\gamma)/\gamma},
\end{equation}
locally uniformly for \(y>0\).

To rewrite the dynamic program in terms of endpoint deficiencies, let
\[
D_j(n):=jx^*-\vdp(n,j),
\qquad D_0(n)\equiv0.
\]
Since
\[
x^*-\tau_{n,j}
=
D_j(n-1)-D_{j-1}(n-1),
\]
equation \eqref{eq:dp-recursion-weibull} is equivalent to
\[
D_j(n)
=
D_j(n-1)
-
\widetilde R\!\left(
D_j(n-1)-D_{j-1}(n-1)
\right).
\]

We can now apply the Weilbull counterpart in the Remark~\ref{rem:weibull_counterpart} inductively in \(j\).
Since \(D_0(n)\equiv0\), the lemma implies that, for every fixed
\(k\geq1\),
\[
h_k(n)
=
\frac{D_k(n)}{a(n)}
\longrightarrow h_k,
\]
where \(h_0=0\) and \(h_k>h_{k-1}\) is the unique solution of
\begin{equation}
\label{eq:hk-limit-recursion-weibull}
h_k
=
\frac{1}{1-\gamma}
\bigl(h_k-h_{k-1}\bigr)^{-(1-\gamma)/\gamma}.
\end{equation}

Now define the similar $v_k$ and $z_k$ as in the proof of Proposition~\ref{prop:dp-value-gamma-ge-0}, and $z_k$ will satisfy the recursion stated in Theorem~\ref{thm:acr-dp}.

Therefore,
\[
\begin{aligned}
\vdp(n,k)
&=
kx^*-h_k a(n)[1+o(1)] \\
&=
kx^*
-
\frac{v_k}{(1-\gamma)^\gamma}
\left\{
x^*
-
F^{\leftarrow}\left(1-\frac1n\right)
\right\}
[1+o(1)].
\end{aligned}
\]
\end{proof}

\subsection{Proof of Proposition~\ref{prop:dp-value-gamma-eq-0}}

\begin{proof}
Since the CE heuristic is a feasible online policy, it cannot outperform the online optimal dynamic program policy,
\[
    \vce(n,k)
    \leq
    \vdp(n,k)
    \leq
    \mu_{n,k}.
\]
By Proposition~\ref{prop:ce-value-gamma-eq-0}, Theorem~\ref{thm:prophet}, and Remark~\ref{rem:a_n_and_u_n},
\[
    \vce(n,k)
    =
    kU(n)[1+o(1)],
    \qquad
    \mu_{n,k}
    =
    kU(n)[1+o(1)].
\]
The result follows by the squeeze theorem.
\end{proof}

\subsection{Proof of Lemma~\ref{lem:ce-recursive}}

\begin{proof}
The standard myopic fluid problem with \(t\) periods remaining and
\(j\) selection opportunities remaining is
\[
\max_{x(\cdot)} \quad \mathbb E[Xx(X)]
\qquad
\text{s.t.}\quad
\mathbb E[x(X)]\le \frac jt,\qquad
x(X)\in\{0,1\}.
\]
Here \(X\sim F\), and \(x(X)\) is a measurable acceptance rule.

\noindent For a multiplier \(\mu\ge 0\), the Lagrangian is
\[
\mathcal L(x,\mu)
=
\mathbb E[Xx(X)]
+\mu\left(\frac jt-\mathbb E[x(X)]\right)
=
\mu\frac jt+\mathbb E[(X-\mu)x(X)].
\]
Maximizing pointwise over \(x(X)\in\{0,1\}\) gives
\[
\phi(\mu)
=
\mu\frac jt+\mathbb E[(X-\mu)^+].
\]
Thus the dual problem is \(\inf_{\mu\ge 0}\phi(\mu)\).

\noindent If \(F\) is continuous, then \(\phi\) is differentiable and
\[
\phi'(\mu)=\frac jt-\mathbb P(X>\mu).
\]
Hence the optimal dual price satisfies
\[
\mathbb P(X>\mu^*)=\frac jt,
\]
or equivalently
\[
\mu^*=F^{\leftarrow}\left(1-\frac jt\right).
\]
Therefore the certainty-equivalent policy accepts a candidate in state
\((t,j)\) iff
\[
X\ge q_{t,j}:=F^{\leftarrow}\left(1-\frac jt\right).
\]
In particular, at the initial state \((n,k)\),
\[
q_{n,k}=F^{\leftarrow}\left(1-\frac kn\right).
\]
Let $q:=q_{n,k}$. So under the CE policy, we accept the first observation $X_1$ iff $X_1>q$.
Conditioning on this event,
\[
\vce(n,k)
=P(X_1>q)\Big(\mathbb{E}[X_1\mid X_1>q]+\vce(n-1,k-1)\Big)
+P(X_1\le q)\,\vce(n-1,k).
\]
Since $q=F^{\leftarrow}(1-k/n)$, we have $P(X_1>q)=\bar F(q)=k/n$. Moreover,
\[
\mathbb{E}[X_1\mathbf 1\{X_1>q\}]
=\int_q^{x^\ast} u f(u)\,du
= q\,\bar F(q)+\int_q^{x^\ast} \bar F(u)\,du,
\]
where the last identity follows by integration by parts. Therefore
\[
P(X_1>q)\,\mathbb{E}[X_1\mid X_1>q]
=\mathbb{E}[X_1\mathbf 1\{X_1>q\}]
=\frac{k}{n}\,q+\int_q^{x^\ast}\bar F(u)\,du.
\]
Substituting these expressions into the conditional decomposition yields the stated recursion.
\end{proof}

\subsection{Lemma~\ref{lem:linear-asymptotic-recursion} and the proof.}

\begin{lemma}[Linear asymptotic recursion]
\label{lem:linear-asymptotic-recursion}
Let \(a>0\), and suppose that a bounded sequence \((x_n)\) satisfies
\[
    x_n-x_{n-1}
    =
    \frac{1}{n}\bigl(-a x_{n-1}+b+o(1)\bigr).
\]
Then
\[
    x_n\longrightarrow \frac{b}{a}.
\]
\end{lemma}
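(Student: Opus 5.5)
The plan is to reduce the recursion to a comparison with the fixed point and show that the deviation \(e_n := x_n - b/a\) tends to zero. Subtracting \(b/a\) from both sides, the hypothesis becomes
\[
    e_n = \Bigl(1-\frac{a}{n}\Bigr)e_{n-1} + \frac{\eta_n}{n},
    \qquad \eta_n \to 0 .
\]
Since \(a>0\), for all \(n\ge n_0 > a\) the multiplier \(1-a/n\) lies in \((0,1)\). This gives a contraction with step sizes \(1/n\), whose sum diverges. Boundedness of \((x_n)\) is only needed to make \(e_{n_0}\) finite and to absorb the initial segment.

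The key steps follow the stochastic-approximation argument.

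\textbf{Step 1.} Fix \(\varepsilon>0\) and choose \(N\ge n_0\) such that \(|\eta_n|\le a\varepsilon\) for all \(n\ge N\).

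\textbf{Step 2.} For \(n>N\), show that \(|e_n|\le (1-a/n)|e_{n-1}| + a\varepsilon/n\). Writing \(|e_n|-\varepsilon\), this becomes
\[
    |e_n|-\varepsilon \le \Bigl(1-\frac an\Bigr)\bigl(|e_{n-1}|-\varepsilon\bigr).
\]

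\textbf{Step 3.} Iterate this inequality. Let \((\cdot)^+\) denote the positive part. Then
\[
    (|e_n|-\varepsilon)^+ \le \prod_{m=N+1}^{n}\Bigl(1-\frac am\Bigr)\,(|e_N|-\varepsilon)^+ .
\]
The product is at most \(\exp\bigl(-a\sum_{m=N+1}^n 1/m\bigr)\), which tends to \(0\) because the harmonic series diverges. It decays like \((N/n)^a\).

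\textbf{Step 4.} Conclude that \(\limsup_n |e_n|\le\varepsilon\). Letting \(\varepsilon\downarrow 0\) gives \(x_n\to b/a\).

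The only point needing care is Step 2. One must check that the positive-part recursion is legitimate when \(|e_{n-1}|<\varepsilon\). In that case the bound \(|e_n|\le(1-a/n)\varepsilon + a\varepsilon/n=\varepsilon\) keeps the sequence inside the \(\varepsilon\)-band, so the inequality on \((\cdot)^+\) still holds. Apart from this, the argument is elementary. No bound on the rate of \(\eta_n\) is needed, only \(\eta_n\to0\).
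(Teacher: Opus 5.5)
Your proof is correct, but it takes a different route from the paper's.

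\textbf{The paper's argument.} It uses the sign of the drift. Outside the band $[b/a-\varepsilon,\,b/a+\varepsilon]$, the increment points toward $b/a$ uniformly up to the factor $1/n$. Divergence of $\sum_n n^{-1}$ then forces the sequence into the band. Boundedness of $(x_n)$ is what makes the increments $O(1/n)$, and this is also what implicitly keeps the sequence from overshooting or escaping once it is close. The paper leaves the ``staying in'' part unstated.

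\textbf{Your argument.} You exploit linearity instead. You write $e_n=(1-a/n)e_{n-1}+\eta_n/n$ and iterate the bound on $(|e_n|-\varepsilon)^+$. This gives an explicit contraction factor $\prod_{m=N+1}^{n}(1-a/m)$, of order $(N/n)^a$. That settles both entering and remaining in the band at once, and it yields a rate.

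\textbf{Boundedness is not needed.} Your argument never uses the boundedness hypothesis, since $e_N$ is a finite real number regardless. So your remark that boundedness is needed ``to make $e_{n_0}$ finite'' is superfluous; your version proves a slightly stronger statement.

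\textbf{Step 2 needs no case check.} Because $1-a/n>0$, you have $|e_n|-\varepsilon\le(1-a/n)(|e_{n-1}|-\varepsilon)\le(1-a/n)(|e_{n-1}|-\varepsilon)^+$. The positive-part inequality follows immediately.

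\textbf{What the paper's route buys.} Its drift argument does not rely on linearity. It is the same template the paper applies to the nonlinear recursion in Lemma~\ref{lem:dp-asymptotic-recursion}, where the drift $H_j$ is only known to be continuous and strictly decreasing with a unique zero.
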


\begin{proof}
Let \(x^\ast=b/a\). For every \(\varepsilon>0\), the increment is
strictly negative, uniformly up to the factor \(1/n\), whenever
\(x_{n-1}\geq x^\ast+\varepsilon\), and strictly positive whenever
\(x_{n-1}\leq x^\ast-\varepsilon\). Since
\(\sum_n n^{-1}=\infty\) and the increments are \(O(1/n)\), the sequence
must eventually enter the neighborhood \([x^\ast-\varepsilon,x^\ast + \varepsilon]\), and letting $\varepsilon\downarrow 0$ gives \(x_n\to x^\ast\).
\end{proof}

\subsection{Proof of Proposition~\ref{prop:ce-value-gamma-le-0}}

\begin{proof}
We first establish the leading-order conclusion for every
\(\gamma<0\). Under the CE policy, when \(j\) units remain over \(t\) periods, the
conditional probability of acceptance is \(j/t\). This is exactly the sampling-without-replacement rule for generating the uniformly generated $j$ subset of the remaining $t$ periods. Consequently, starting from the state $(k,n)$, the
set of the \(k\) acceptance times has the same distribution as a
uniformly chosen \(k\)-subset of the \(n\) arrival times.

Fix \(\varepsilon\in(0,1)\), and let \(m_n=\lfloor\varepsilon n\rfloor\).
Let \(A_{n,\varepsilon}\) be the event that all \(k\) acceptances occur
before the final \(m_n\) arrivals. Then
\begin{align*}
\mathbb P(A_{n,\varepsilon})
=
\frac{\binom{n-m_n}{k}}{\binom{n}{k}}
= \prod_{r=0}^{k-1} \frac{n-m_n-r}{n-r} = \prod_{r=0}^{k-1} \frac{1-\frac{m_n}{n}-\frac{r}{n}}{1-\frac{r}{n}} = \longrightarrow
(1-\varepsilon)^k.
\end{align*}
On \(A_{n,\varepsilon}\), every acceptance occurs at a state with
\(t\geq m_n+1\) periods and \(1\leq j\leq k\) units remaining.
Therefore, the corresponding CE threshold satisfies
\[
    F^{\leftarrow}\left(1-\frac jt\right)
    =
    U\left(\frac tj\right)
    \geq
    U\left(\frac{m_n+1}{k}\right).
\]
Since rewards are nonnegative, it follows that
\[
    \vce(n,k)
    \geq
    kU\left(\frac{m_n+1}{k}\right)
    \mathbb P(A_{n,\varepsilon}).
\]
Since \(U(t)\to x^*\) as \(t\to\infty\),
\[
    \liminf_{n\to\infty}\vce(n,k)
    \geq
    kx^*(1-\varepsilon)^k.
\]
Letting \(\varepsilon\downarrow0\), and using the trivial upper bound
\(\vce(n,k)\leq kx^*\), gives
\[
    \vce(n,k)\longrightarrow kx^*
\]
for every \(\gamma<0\).

We now derive the sharper endpoint-scale expansion. For the remainder
of the proof, suppose in addition that \(-1<\gamma<0\).

Set
\[
    a(n):=x^*-U(n),
    \qquad
    q_{n,k}:=F^{\leftarrow}\!\left(1-\frac{k}{n}\right).
\]
Lemma~\ref{lem:ce-recursive} gives
\begin{equation}
\label{eq:ce-rec-neg}
\begin{aligned}
\vce(n,k)
&=
\frac{k}{n}\vce(n-1,k-1)
+\int_{q_{n,k}}^{x^*}\bar F(u)\,du \\
&\quad
+\frac{k}{n}q_{n,k}
+\left(1-\frac{k}{n}\right)\vce(n-1,k).
\end{aligned}
\end{equation}

Define the deficiency
\[
    D(n,k):=kx^*-\vce(n,k).
\]
Subtracting \eqref{eq:ce-rec-neg} from \(kx^*\) gives
\begin{equation}
\label{eq:D-rec-neg}
\begin{aligned}
D(n,k)
&=
\left(1-\frac{k}{n}\right)D(n-1,k)
+\frac{k}{n}D(n-1,k-1) \\
&\quad
+\frac{k}{n}(x^*-q_{n,k})
-\int_{q_{n,k}}^{x^*}\bar F(u)\,du .
\end{aligned}
\end{equation}

Since \(a\in\mathcal{RV}_\gamma\), the Smooth Variation Theorem
\cite[Theorem~1.8.2]{Bingham_Goldie_Teugels_1987} yields an eventually
continuously differentiable function \(\widetilde a\sim a\) such that
\[
    \frac{n\widetilde a'(n)}{\widetilde a(n)}
    \longrightarrow\gamma.
\]
Consequently,
\begin{equation}
\label{eq:an-ratio-ce-neg}
    \frac{\widetilde a(n-1)}{\widetilde a(n)}
    =
    1-\frac{\gamma}{n}
    +o\left(\frac1n\right).
\end{equation}

Define
\[
    \widehat h_j(n):=\frac{D(n,j)}{\widetilde a(n)},
    \qquad j\geq0,
\]
with \(\widehat h_0(n)\equiv0\). Regular variation and
\(\widetilde a(n)\sim a(n)\) give
\begin{equation}
\label{eq:q-ratio-ce-neg}
    \frac{x^*-q_{n,k}}{\widetilde a(n)}
    =
    \frac{a(n/k)}{\widetilde a(n)}
    =
    k^{-\gamma}[1+o(1)].
\end{equation}

By Karamata's theorem at zero,
\[
    \int_q^{x^*}\bar F(u)\,du
    =
    \frac{-\gamma}{1-\gamma}
    (x^*-q)\bar F(q)[1+o(1)],
    \qquad q\uparrow x^*.
\]
Since \(\bar F(q_{n,k})=k/n\), it follows that
\begin{equation}
\label{eq:tailint-ce-neg}
    \int_{q_{n,k}}^{x^*}\bar F(u)\,du
    =
    \frac{k}{n}\frac{-\gamma}{1-\gamma}
    (x^*-q_{n,k})[1+o(1)].
\end{equation}
Therefore,
\[
\frac{k}{n}(x^*-q_{n,k})
-\int_{q_{n,k}}^{x^*}\bar F(u)\,du
=
\frac{k}{n}\frac{1}{1-\gamma}
(x^*-q_{n,k})[1+o(1)].
\]

Dividing \eqref{eq:D-rec-neg} by \(\widetilde a(n)\), and using
\eqref{eq:an-ratio-ce-neg}--\eqref{eq:tailint-ce-neg}, gives
\begin{equation}
\label{eq:gk-increment-ce-neg}
\begin{aligned}
\widehat h_k(n)-\widehat h_k(n-1)
=
\frac1n\left[
    -(k+\gamma)\widehat h_k(n-1)
    +k\widehat h_{k-1}(n-1)
    +\frac{k^{1-\gamma}}{1-\gamma}
    +o(1)
\right].
\end{aligned}
\end{equation}

We now argue inductively in \(k\) to show the convergence of $\hat h_k(n)$. The assertion is immediate for
\(k=0\). Suppose that
\[
    \widehat h_{k-1}(n)\longrightarrow w_{k-1}.
\]
Because \(\gamma>-1\), we have \(k+\gamma>0\) for every \(k\geq1\).
Moreover, \eqref{eq:gk-increment-ce-neg} and the induction hypothesis
imply that \((\widehat h_k(n))\) is bounded. Hence
Lemma~\ref{lem:linear-asymptotic-recursion} yields
\[
    \widehat h_k(n)\longrightarrow w_k,
\]
where
\[
    w_k
    =
    \frac{k}{k+\gamma}w_{k-1}
    +
    \frac{k^{1-\gamma}}
    {(k+\gamma)(1-\gamma)}.
\]
With \(w_0=0\), this gives
\[
    w_1=\frac{1}{(1+\gamma)(1-\gamma)}
    =\frac{1}{1-\gamma^2}.
\]

Finally, since \(\widetilde a(n)\sim a(n)\),
\[
\begin{aligned}
\vce(n,k)
&=
kx^*-D(n,k) \\
&=
kx^*
-w_k\left\{
x^*-F^{\leftarrow}\!\left(1-\frac1n\right)
\right\}[1+o(1)].
\end{aligned}
\]
\end{proof}

\subsection{Proof of Proposition~\ref{prop:ce-value-gamma-eq-0}}

\begin{proof}

From the same reasoning as the proof of Proposition~\ref{prop:ce-value-gamma-le-0}, we obtain
\[
    \vce(n,k)
    \geq
    kU\left(\frac{m_n+1}{k}\right)
    \mathbb P(A_{n,\varepsilon})
    \geq
    kU\left(\frac{\varepsilon n}{k}\right)
    \mathbb P(A_{n,\varepsilon})
\]
For \(F\in\mathcal D_0\), the Gumbel quantile relation gives, for every
fixed \(c>0\),
\[
    U(cn)=U(n)+a(n)\log c+o(a(n)),
\]
where \(a(n)=o(U(n))\). Hence
\[
    \frac{U(cn)}{U(n)}\longrightarrow1.
\]
Dividing the preceding lower bound by \(U(n)\) therefore yields
\[
    \liminf_{n\to\infty}
    \frac{\vce(n,k)}{U(n)}
    \geq
    k(1-\varepsilon)^k.
\]
Letting \(\varepsilon\downarrow0\), we obtain
\[
    \liminf_{n\to\infty}
    \frac{\vce(n,k)}{U(n)}
    \geq k.
\]

On the other hand, the CE policy cannot outperform the prophet, so
\[
    \vce(n,k)\leq\mu_{n,k}.
\]
By Theorem~\ref{thm:prophet},
\[
    \mu_{n,k}=kU(n)+O(a(n))=kU(n)[1+o(1)].
\]
It follows that
\[
    \limsup_{n\to\infty}
    \frac{\vce(n,k)}{U(n)}
    \leq k.
\]
Combining the upper and lower bounds proves the result.
\end{proof}

\subsection{Proof of Theorem~\ref{thm:prophet}}

\begin{proof}

Recall that
\[
\mu_{n,k}
=
\E\left[\sum_{r=1}^k X_{n-r+1:n}\right],
\]
and that we can represent \(X_{n-r+1:n}\) as
\(F^{\leftarrow}(V_{n-r+1:n})\), where \(V_{n-r+1:n}\) is the
\(r\)-th largest order statistic from a sample of independent uniform
random variables.

From the standard theory of order statistics,
\[
1-V_{n-r+1:n}\sim \operatorname{Beta}(r,n-r+1).
\]
Define
\[
S_{n,r}:=1-V_{n-r+1:n},
\]
which has the same distribution as the \(r\)-th smallest uniform order
statistic. Also, let
\[
W_{n,r}:=\frac{1}{nS_{n,r}}.
\]
It is then easy to verify that
\[
X_{n-r+1:n}
=
F^{\leftarrow}(V_{n-r+1:n})
=
U\left(\frac{1}{S_{n,r}}\right)
=
U(nW_{n,r}).
\]

Furthermore, by the exponential representation of uniform spacings
\[
S_r:=V_{r:n}-V_{r-1:n},
\qquad r=1,\dots,n+1,
\]
where \(V_{0:n}=0\) and \(V_{n+1:n}=1\), we have
(see \cite[Theorem~2.2, p.~208]{devroye_1986})
\[
\left(S_1,\dots,S_{n+1}\right)
\overset{d}{=}
\left(
\frac{E_1}{\sum_{i=1}^{n+1}E_i},
\dots,
\frac{E_{n+1}}{\sum_{i=1}^{n+1}E_i}
\right).
\]
This directly implies that
\[
S_{n,r}
\overset{d}{=}
V_{r:n}
\overset{d}{=}
\frac{\sum_{i=1}^r E_i}{\sum_{i=1}^{n+1}E_i}
=
\frac{\Gamma_r}{\Gamma_{n+1}},
\]
where
\[
\Gamma_r:=E_1+\cdots+E_r,
\qquad
E_i\sim\operatorname{Exp}(1).
\]

Therefore,
\[
nS_{n,r}
\overset{d}{=}
\frac{n}{\Gamma_{n+1}}\Gamma_r
\implies
\Gamma_r,
\]
because
\[
\frac{n}{\Gamma_{n+1}}\to 1
\]
by the law of large numbers. Hence, by the continuous mapping theorem,
\[
W_{n,r}\implies\Gamma_r^{-1}.
\]

We next establish moment bounds for \(W_{n,r}\). For \(0<p<r\),
\[
\E[W_{n,r}^{p}]
=
n^{-p}\E[S_{n,r}^{-p}],
\]
where
\[
S_{n,r}\sim\operatorname{Beta}(r,n-r+1).
\]
From the integral representation of the beta distribution, and we use $\text{B}(\cdot)$ for shorthand,
\[
\E[S_{n,r}^{-p}]
=
\frac{\operatorname{B}(r-p,n-r+1)}
     {\operatorname{B}(r,n-r+1)}.
\]
Using
\[
\operatorname{B}(a,b)
=
\frac{\Gamma(a)\Gamma(b)}{\Gamma(a+b)},
\]
we obtain
\[
\E[S_{n,r}^{-p}]
=
\frac{\Gamma(r-p)\Gamma(n+1)}
     {\Gamma(r)\Gamma(n+1-p)}.
\]
Consequently,
\[
\E[W_{n,r}^{p}]
=
n^{-p}
\frac{\Gamma(r-p)\Gamma(n+1)}
     {\Gamma(r)\Gamma(n+1-p)}.
\]
Using the gamma-ratio asymptotic,
\[
\frac{\Gamma(n+1)}{\Gamma(n+1-p)}
\sim n^p,
\]
we obtain
\[
\E[W_{n,r}^{p}]
\to
\frac{\Gamma(r-p)}{\Gamma(r)}.
\]
In particular,
\[
\sup_n \E[W_{n,r}^{p}]<\infty,
\qquad 0<p<r.
\]

\subsubsection*{Case 1: \(\gamma\in(0,1)\)}

Since \(U\in\mathcal{RV}_{\gamma}\), for every fixed \(x>0\),
\[
\frac{U(tx)}{U(t)}
\to x^\gamma
\qquad\text{as }t\to\infty.
\]
Since
\[
X_{n-r+1:n}=U(nW_{n,r})
\]
and
\[
W_{n,r}\implies\Gamma_r^{-1},
\]
it follows that
\[
\frac{X_{n-r+1:n}}{U(n)}
=
\frac{U(nW_{n,r})}{U(n)}
\implies
\Gamma_r^{-\gamma}.
\]

We now establish uniform integrability in order to obtain convergence
of expectations. Choose \(\epsilon>0\) such that
\[
\gamma+\epsilon<1.
\]
By Potter's bound
(see \cite{Bingham_Goldie_Teugels_1987}), for all sufficiently large
\(n\),
\[
\frac{U(nW_{n,r})}{U(n)}
\leq
C
\begin{cases}
W_{n,r}^{\gamma-\epsilon}, & W_{n,r}\leq 1,\\[2mm]
W_{n,r}^{\gamma+\epsilon}, & W_{n,r}>1.
\end{cases}
\]
Choose \(q>1\) sufficiently close to \(1\) such that
\[
q(\gamma+\epsilon)<1\leq r.
\]
Using the preceding moment bound, we obtain
\[
\sup_n
\E\left[
\left(
\frac{X_{n-r+1:n}}{U(n)}
\right)^q
\right]
=
\sup_n
\E\left[
\left(
\frac{U(nW_{n,r})}{U(n)}
\right)^q
\right]
<\infty.
\]

Now define
\[
Y_n:=\frac{X_{n-r+1:n}}{U(n)}.
\]
We claim that \(\{Y_n\}\) is uniformly integrable. Indeed,
\begin{align*}
\lim_{M\to\infty}
\sup_n
\E\left[
Y_n\mathbb{I}\{Y_n>M\}
\right]
&\leq
\lim_{M\to\infty}
\sup_n
\E\left[
\frac{Y_n^q}{M^{q-1}}
\mathbb{I}\{Y_n>M\}
\right] \\
&\leq
\lim_{M\to\infty}
\frac{1}{M^{q-1}}
\sup_n \E[Y_n^q] \\
&=0,
\end{align*}
where the last equality follows from
\[
\sup_n\E[Y_n^q]<\infty.
\]

By uniform integrability, we obtain
\[
\frac{\E[X_{n-r+1:n}]}{U(n)}
\to
\E[\Gamma_r^{-\gamma}].
\]
Since \(\Gamma_r\) has a \(\operatorname{Gamma}(r,1)\) distribution,
its density is
\[
f_{\Gamma_r}(x)
=
\frac{x^{r-1}e^{-x}}{\Gamma(r)},
\qquad x>0.
\]
Therefore,
\[
\E[\Gamma_r^{-\gamma}]
=
\frac{1}{\Gamma(r)}
\int_0^\infty x^{r-\gamma-1}e^{-x}\,dx
=
\frac{\Gamma(r-\gamma)}{\Gamma(r)}.
\]
Hence,
\[
\E[X_{n-r+1:n}]
=
U(n)\frac{\Gamma(r-\gamma)}{\Gamma(r)}
+
o(U(n)).
\]
Summing over \(r=1,\dots,k\), we obtain
\[
\mu_{n,k}
=
U(n)
\sum_{r=1}^k
\frac{\Gamma(r-\gamma)}{\Gamma(r)}
+
o(U(n)).
\]

\subsubsection*{Case 2: $\gamma <0$}
Let $x^\ast$ denote the finite right endpoint of \(F\), and define the function
\[
d(t):=x^\ast-U(t).
\]
Since \(F\in D_\gamma\) with \(\gamma<0\), we have
\[
d\in\operatorname{RV}_\gamma,
\]
which means that, for every fixed \(x>0\),
\[
\frac{d(tx)}{d(t)}
\to x^\gamma
\qquad\text{as }t\to\infty.
\]
Then similarly to Case 1, we obtain 
\[
\frac{d(n W_{n,r})}{d(n)} \implies \Gamma_r^{-\gamma},
\]
and also, we have 
\[
\sup_n\E[W_{n,r}^{-p}]<\infty
\qquad\text{for every }p>0.
\]
Choose \(\epsilon>0\) such that
\[
\gamma+\epsilon<0.
\]
By Potter's bound, for all sufficiently large \(n\),
\[
\frac{d(nW_{n,r})}{d(n)}
\leq
C
\begin{cases}
W_{n,r}^{\gamma-\epsilon},
    & W_{n,r}\leq 1,\\[2mm]
W_{n,r}^{\gamma+\epsilon},
    & W_{n,r}>1.
\end{cases}
\]
When \(W_{n,r}>1\), since \(\gamma+\epsilon<0\),
\[
W_{n,r}^{\gamma+\epsilon}\leq 1.
\]
When \(W_{n,r}\leq 1\),
\[
W_{n,r}^{\gamma-\epsilon}
=
W_{n,r}^{-(|\gamma|+\epsilon)}.
\]
Choose \(q>1\). Using the preceding negative-moment bound, we obtain
\[
\sup_n
\E\left[
\left(
\frac{x^\ast-X_{n-r+1:n}}{d(n)}
\right)^q
\right]
<\infty.
\]
Therefore, similar to case 1, the sequence
\[
\left\{
\frac{x^\ast-X_{n-r+1:n}}{d(n)}
\right\}_{n\geq 1}
\]
is uniformly integrable. Hence,
\[
\frac{\E[x^\ast-X_{n-r+1:n}]}{d(n)}
\to
\E[\Gamma_r^{-\gamma}].
\]
Therefore, we obtain
\[
\mu_{n,k}
=
kx^\ast
-
d(n)
\sum_{r=1}^k
\frac{\Gamma(r-\gamma)}{\Gamma(r)}
+
o(d(n)).
\]

\subsubsection*{Case 3: $\gamma =0$}

Suppose that \(F\in D_0\). Then there exists an auxiliary function
\(a(t)>0\) such that, for every fixed \(x>0\),
\[
\frac{U(tx)-U(t)}{a(t)}
\to
\log x
\qquad\text{as }t\to\infty.
\]
A convenient asymptotically equivalent choice is
\[
a_n:=U(en)-U(n).
\]
Indeed, by setting \(x=e\), we obtain
\[
\frac{U(en)-U(n)}{a(n)}
\to 1,
\]
and hence
\[
a_n\sim a(n).
\]

Since
\[
X_{n-r+1:n}=U(nW_{n,r})
\]
and
\[
W_{n,r}\implies\Gamma_r^{-1},
\]
we obtain
\[
\frac{X_{n-r+1:n}-U(n)}{a(n)}
=
\frac{U(nW_{n,r})-U(n)}{a(n)}
\implies
\log(\Gamma_r^{-1})
=
-\log\Gamma_r.
\]

We next establish uniform integrability. The extended Potter bound (see \cite{deHaan2006extreme}, Theorem B.2.18, p. 383) for
Gumbel-domain quantile functions implies that, for every sufficiently
small \(\epsilon>0\) and all sufficiently large \(n\),
\[
\left|
\frac{U(nW_{n,r})-U(n)}{a(n)}
\right|
\leq
C\left(
W_{n,r}^{\epsilon}
+
W_{n,r}^{-\epsilon}
\right).
\]
Choose \(q>1\) and \(\epsilon>0\) such that
\[
q\epsilon<1\leq r.
\]
Using the positive and negative moment bound from case 1 and 2
\[
\sup_n\E[W_{n,r}^{q\epsilon}]<\infty, \quad \sup_n\E[W_{n,r}^{-q\epsilon}]<\infty,
\]
we obtain
\[
\sup_n
\E\left[
\left|
\frac{X_{n-r+1:n}-U(n)}{a(n)}
\right|^q
\right]
<\infty.
\]
Therefore, followed by the similar reasoning in the previous cases, the sequence
\[
\left\{
\frac{X_{n-r+1:n}-U(n)}{a(n)}
\right\}_{n\geq 1}
\]
is uniformly integrable. Consequently,
\[
\frac{\E[X_{n-r+1:n}]-U(n)}{a(n)}
\to
-\E[\log\Gamma_r].
\]

Since \(\Gamma_r\sim\operatorname{Gamma}(r,1)\),
\begin{align*}
\E[\log\Gamma_r]
&=
\frac{1}{\Gamma(r)}
\int_0^\infty
(\log x)x^{r-1}e^{-x}\,dx\\
&=
\frac{\Gamma'(r)}{\Gamma(r)}
=
\psi(r),
\end{align*}
where
\[
\psi(r):=\frac{\Gamma'(r)}{\Gamma(r)}
\]
is the digamma function. Hence,
\[
\E[X_{n-r+1:n}]
=
U(n)-a(n)\psi(r)+o(a(n)).
\]

Summing over \(r=1,\dots,k\), we obtain
\[
\mu_{n,k}
=
kU(n)
+
a(n)
\sum_{r=1}^k[-\psi(r)]
+
o(a(n)).
\]
Therefore, we obtain
\[
\mu_{n,k}
=
kF^{\leftarrow}\left(1-\frac{1}{n}\right)
+
a_n
\sum_{r=1}^k[-\psi(r)]
+
o(a_n),
\]
where
\[
a_n
=
U(en)-U(n).
\]

\begin{remark}
\label{rem:a_n_and_u_n}
We further claim $a_n = o(U(n))$. Indeed, since for every fixed $c>0$,
\[
\frac{U(cx) - U(x)}{a_n} \longrightarrow \log c.
\]
Take $c = e^{-M}$, where $M>0$ is arbitrary, we obtain
\[
\frac{U(n) - U(ne^{-M})}{a_n} \longrightarrow \log c,
\]
which leads to 
\[
\liminf_{n\to \infty} \frac{U(n)}{a_n} \ge M,
\]
but $M$ can be arbitrarily large, so 
\[
\frac{a_n}{U(n)} \longrightarrow 0.
\]
\end{remark}

\begin{lemma}
\label{lem:gamma-sum-identity}
Let $k\in\mathbb{N}$, $k\ge 1$, and let $\gamma\in\mathbb{R}$ be such that
$\Gamma(r-\gamma)$ is finite for all $r=1,\dots,k$ (e.g.\ it suffices that $\gamma<1$).
Then
\[
\sum_{r=1}^{k}\frac{\Gamma(r-\gamma)}{\Gamma(r)}
\;=\;
\frac{\Gamma(k+1-\gamma)}{(1-\gamma)\Gamma(k)}.
\]
\end{lemma}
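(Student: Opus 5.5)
We prove the identity by induction on $k$, using only the functional equation $\Gamma(z+1)=z\Gamma(z)$. Under the standing hypothesis, every $\Gamma(r-\gamma)$ with $1\le r\le k$ is finite. In particular $\Gamma(1-\gamma)$ is finite and nonzero, so $1-\gamma\neq 0$ is implicit; it is automatic when $\gamma<1$. Write $T_k:=\frac{\Gamma(k+1-\gamma)}{(1-\gamma)\Gamma(k)}$ for the claimed right-hand side.

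\emph{Base case} $k=1$. Since $\Gamma(1)=1$ and $\Gamma(2-\gamma)=(1-\gamma)\Gamma(1-\gamma)$,
\[
T_1=\frac{\Gamma(2-\gamma)}{(1-\gamma)}=\Gamma(1-\gamma),
\]
which is the single summand $\Gamma(1-\gamma)/\Gamma(1)$.

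\emph{Inductive step.} Assume the identity holds for $k-1\ge 1$. It then suffices to show
\[
T_k-T_{k-1}=\frac{\Gamma(k-\gamma)}{\Gamma(k)}.
\]
Use $\Gamma(k+1-\gamma)=(k-\gamma)\Gamma(k-\gamma)$ and $\Gamma(k)=(k-1)\Gamma(k-1)$ to put both terms over the common factor $\Gamma(k-\gamma)/\Gamma(k)$:
\[
T_k-T_{k-1}=\frac{\Gamma(k-\gamma)}{(1-\gamma)\Gamma(k)}\bigl[(k-\gamma)-(k-1)\bigr]=\frac{\Gamma(k-\gamma)}{\Gamma(k)}.
\]
This is exactly the $r=k$ summand, which completes the induction.

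There is no real obstacle here; the only point needing care is the bookkeeping of the shifts $\Gamma(k+1-\gamma)$ versus $\Gamma(k-\gamma)$, so that the bracket collapses to $1-\gamma$ and cancels the prefactor $(1-\gamma)^{-1}$.

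As an independent check, one can use $\Gamma(r-\gamma)/\Gamma(r)=\Gamma(1-\gamma)\binom{r-1-\gamma}{r-1}$ together with the hockey-stick identity. This route gives the same result but is unnecessary given the telescoping argument above.
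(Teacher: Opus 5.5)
Your proof is correct and is essentially the paper's argument: the paper telescopes $A_r:=\Gamma(r+1-\gamma)/\Gamma(r)=(1-\gamma)T_r$, showing $A_r-A_{r-1}=(1-\gamma)\Gamma(r-\gamma)/\Gamma(r)$ via the same two uses of $\Gamma(z+1)=z\Gamma(z)$ and the same base term $A_1=(1-\gamma)\Gamma(1-\gamma)$, which is your induction written as a telescoping sum. Your observation that finiteness of $\Gamma(1-\gamma)$ forces $\gamma\neq 1$ makes explicit a point the paper leaves implicit when it divides by $1-\gamma$.
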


\begin{proof}
For $r\ge 2$, define
\[
A_r := \frac{\Gamma(r+1-\gamma)}{\Gamma(r)}.
\]
Using the Gamma recursion $\Gamma(r+1-\gamma)=(r-\gamma)\Gamma(r-\gamma)$ and
$\Gamma(r)=(r-1)\Gamma(r-1)$, we compute
\begin{align*}
A_r - A_{r-1}
&=
\frac{\Gamma(r+1-\gamma)}{\Gamma(r)}-\frac{\Gamma(r-\gamma)}{\Gamma(r-1)} \\
&=
\frac{(r-\gamma)\Gamma(r-\gamma)}{\Gamma(r)}-\frac{(r-1)\Gamma(r-\gamma)}{\Gamma(r)} \\
&=
(1-\gamma)\,\frac{\Gamma(r-\gamma)}{\Gamma(r)}.
\end{align*}
Hence, for $k\ge 2$,
\[
(1-\gamma)\sum_{r=2}^{k}\frac{\Gamma(r-\gamma)}{\Gamma(r)}
=\sum_{r=2}^{k}(A_r-A_{r-1})=A_k-A_1.
\]
Now note that
\[
A_1=\frac{\Gamma(2-\gamma)}{\Gamma(1)}
=(1-\gamma) \Gamma(1-\gamma).
\]
Therefore,
\[
(1-\gamma)\sum_{r=1}^{k}\frac{\Gamma(r-\gamma)}{\Gamma(r)}
=
(1-\gamma)\frac{\Gamma(1-\gamma)}{\Gamma(1)} + (A_k-A_1)
= A_k.
\]
So it gives
\[
\sum_{r=1}^{k}\frac{\Gamma(r-\gamma)}{\Gamma(r)}
=
\frac{1}{1-\gamma}\cdot \frac{\Gamma(k+1-\gamma)}{\Gamma(k)},
\]
as claimed.
\end{proof}
Thus, using this lemma~\ref{lem:gamma-sum-identity}, we will get the results as stated in Theorem~\ref{thm:prophet}.
\end{proof}

\end{document}